\numberwithin{equation}{section}
\newcommand{\defeq}{\coloneqq}
\newtheorem{proposition}{Proposition}[section]
\newtheorem{theorem}[proposition]{Theorem}
\newtheorem{lemma}[proposition]{Lemma}
\newtheorem{corollary}[proposition]{Corollary}
\theoremstyle{definition}
\theoremstyle{remark}
\newtheorem{example}{Example}[section]
\newcommand{\st}{\: :\:}
\newcommand{\R}{\mathbf{R}}
\newcommand{\D}{\mathcal{D}}
\newcommand{\N}{\mathbf{N}}
\newcommand{\g}{\mathfrak{g}}
\newcommand{\End}{\operatorname{End}}
\newcommand{\Ind}{\mathcal{I}}
\newcommand{\K}{\mathcal{K}}
\DeclareMathOperator{\ad}{ad}
\newcommand{\vertiii}[1]{{\left\vert\kern-0.25ex\left\vert\kern-0.25ex\left\vert #1 
    \right\vert\kern-0.25ex\right\vert\kern-0.25ex\right\vert}}
\title[Limiting inequalities on stratified homogeneous groups]{Limiting Sobolev and Hardy inequalities on stratified homogeneous groups}
\author{Jean Van Schaftingen}
\address{Universit\'e catholique de Louvain\\
Institut de Recherche en Math\'ematique et Physique (IRMP)\\
Chemin du Cyclotron 2\\
1348 Louvain-la-Neuve\\
Belgium}
\email{Jean.VanSchaftingen@uclouvain.be}
\author{Po-Lam Yung}
\address{The Chinese University of Hong Kong \\
Department of Mathematics \\
Shatin \\
Hong Kong}
\email{plyung@math.cuhk.edu.hk}
\address{Australian National University \\
Mathematical Sciences Institute \\
Canberra \\
Australia}
\email{polam.yung@anu.edu.au}
\thanks{Yung was partially supported by the General Research Fund CUHK14313716 from the Hong Kong Research Grant Council, and a Future Fellowship FT200100399 from the Australian Research Council.}
\date{\today}
\subjclass{35R03 (26D15, 35A23, 35H20, 43A80, 46E35)}
\keywords{Sobolev embedding; overdetermined elliptic operator; compatibility conditions; homogeneous differential operator; maximally hypoelliptic operator; canceling operator; cocanceling operator; exterior derivative; symmetric derivative; Korn--Sobolev inequality; Hodge inequality; Saint-Venant compatibility conditions} 
\begin{document}

\begin{abstract}
We give a sufficient condition for limiting Sobolev and Hardy inequalities to hold on stratified homogeneous groups. In the Euclidean case, this condition reduces to the known cancelling necessary and sufficient condition.
We obtain in particular endpoint Korn--Sobolev and Korn--Hardy inequalities on stratified homogeneous groups.
\end{abstract}

\maketitle

\section{Introduction}

Let \(n, k\) be positive integers with \(n \geq 2\) and let \(V, E\) be finite dimensional inner product spaces over~$\R$. Let \(D\) be the gradient on \(\R^n\) and 
\begin{equation*}
A(D) \colon C^{\infty}(\R^n;V) \to C^{\infty}(\R^n;E)
\end{equation*} 
be a (matrix-valued) homogeneous linear injectively elliptic operator of order \(k\) with constant coefficients  --- that is 
\begin{equation*}
A (D) u = \sum_{\lvert \alpha \rvert = k} A_\alpha \partial^\alpha u
\end{equation*} 
with \(A_\alpha \in \End(V;E)\) where \(A (\xi) \defeq \sum_{\lvert \alpha \rvert = k} A_\alpha \xi^\alpha \in \End(V;E)\) is injective for all \(\xi \ne 0\). For such operators, while it is well-known that 
\begin{equation} \label{eq:CZLp}
  \lVert D^k u \rVert_{L^{p}(\R^n;V)}
  \leq C_p
  \lVert A (D) u \rVert_{L^p (\R^n;E)}
\end{equation}
for all \(u \in C^\infty_c (\R^n, V)\) and \(1 < p < \infty\), the same inequality generally fails when \(p = 1\). Nevertheless, under the above conditions on \(A(D)\), the following statements are known to be equivalent:
\begin{enumerate}[(a)]
\item An endpoint Gagliardo--Nirenberg--Sobolev inequality
\begin{equation}
\label{eq_Iengeexaigheelaigh6uo1ei}
  \lVert D^{k - 1} u \rVert_{L^{n/(n - 1)}(\R^n;V)}
  \leq C
  \lVert A (D) u \rVert_{L^1 (\R^n;E)},
\end{equation}
holds for all \(u \in C^\infty_c (\R^n, V)\);
\item An endpoint Hardy inequality 
\begin{equation}
\label{eq_Iengeexaigheelaigh6uo1ei2}
  \left\| \frac{D^{k - 1} u}{|x|} \right\|_{L^1(\R^n;V)}
  \leq C
  \lVert A (D) u \rVert_{L^1 (\R^n;E)},
\end{equation}
holds for all \(u \in C^\infty_c (\R^n, V)\);
\item  \(A (D)\) is canceling, that is, 
\begin{equation*}
 \bigcap_{\xi \in \R^n \setminus \{0\}} A (\xi)[V] = \{0\}.
\end{equation*}
\end{enumerate}
Note that if \eqref{eq:CZLp} were to hold when \(p = 1\), then \eqref{eq_Iengeexaigheelaigh6uo1ei} and \eqref{eq_Iengeexaigheelaigh6uo1ei2} would be a consequence via Sobolev embedding and the Hardy inequality for the gradient on \(\R^n\) respectively.
The above equivalence was accomplished in a series of works originating from sharp and delicate endpoint estimates by Bourgain and Brezis \citelist{\cite{MR1913720}\cite{MR2038078}\cite{MR1949165}\cite{MR2057026}\cite{VanSchaftingen_2008}\cite{VanSchaftingen_2013}\cite{MR2293957}\cite{MR2075883}\cite{VanSchaftingen_2004_L1}\cite{VanSchaftingen_2004_ARB}\cite{MR3298002}\cite{MR3283556}}.
This canceling condition also plays a role in endpoint \(L^\infty\) estimates \citelist{\cite{MR4087393}\cite{MR4024554}\cite{MR2807409}\cite{MR2578609}}.

In this paper, as in \citelist{\cite{MR3191973}\cite{Chanillo_VanSchaftingen_2009}}, we consider instead of the Euclidean space \(\R^n\) a stratified homogeneous group \(G\).
This means that \(G\) is a connected and simply connected real Lie group, whose Lie algebra \(\g\) is nilpotent  and admits a direct sum decomposition
\begin{equation} \label{eq:Liealg_grading}
\g = \g_1 \oplus \g_2 \oplus \dotsb \oplus \g_r
\end{equation}
where \([\g_i,\g_j] \subset \g_{i+j}\) for all \(i,j\) (and \(\g_{i+j}\) is understood to be zero for \(i+j>r\)); furthermore, \(\g_1\) is assumed to generate \(\g\) as a Lie algebra. The additive group $\R^n$ is the simplest example; all other examples are non-abelian, with the next simplest ones being the Heisenberg groups $\mathbb{H}^n$ that arise in connection to several complex variables and quantum mechanics. The Heisenberg groups are step $2$ groups, meaning that $r$ can be taken to be~$2$ in \eqref{eq:Liealg_grading}; our results, on the other hand, are valid for groups of arbitrarily high steps.

Of particular importance to us is the homogeneous dimension $Q$ of $G$; it is defined as
\begin{equation} \label{eq:Qdef}
Q \defeq \sum_{j = 1}^r j \cdot \dim(\g_j)
\end{equation} 
and arises naturally when one computes the push-forward of the Haar measure $dx$ on $G$ by an automorphic dilation (see \eqref{eq:Q_dilate} below). The homogeneous dimension $Q$ will play the role of $n$, in our generalization of estimates such as \eqref{eq_Iengeexaigheelaigh6uo1ei} to $G$. To describe such a generalization, let us write \(X_1, \dotsc, X_m\) for a basis of \(\g_1\) (in particular, \(m = \dim \g_1\)). Each vector \(X_i\), \(i=1, \dotsc, m\), gives rise to a left-invariant vector field on \(G\), which by abuse of notation we also denote by \(X_i\). Let now \(k\) be a positive integer, and \( \Ind_k \) be the index set \( \{1, \dots, m\}^k \). For \( \gamma = (\gamma_1, \dots, \gamma_k ) \in \Ind_k \), we write \(X_{\gamma}\) for the left-invariant differential operator on $G$ given by
\begin{equation} \label{eq:Xgamma_def}
X_{\gamma} \defeq X_{\gamma_1} \dots X_{\gamma_k}.
\end{equation}
Note that \(X_{\gamma}\) depends on the ordering of the indices within \(\gamma\) if the group \(G\) is not abelian. If \(A^{\gamma} \in \End(V;E)\) for all indices \(\gamma \in \Ind_k\), then
\begin{equation*}
A(D) \defeq \sum_{\gamma \in \Ind_k} A^{\gamma} X_{\gamma},
\end{equation*}
defines a homogeneous left-invariant linear partial differential operator  \(A(D) \colon C^{\infty}(G;V) \to C^{\infty}(G;E)\) of order \(k\) on \(G\) from \(V\) to \(E\) with real coefficients. We are interested in when the inequality 
\begin{equation}
\label{eq_thaX8yohPeilohNgo3lae7ei}
  \lVert D^{k - 1} u \rVert_{L^{Q/(Q - 1)}(G;V)}
  \leq C
  \lVert A (D) u \rVert_{L^1 (G;E)},
\end{equation}
holds for such an operator \(A (D)\), for all \(u \in C^{\infty}_c(G;V)\), where \(D^{k-1} u \defeq (X_{\gamma} u)_{\gamma \in \Ind_{k-1}}\). The inequality \eqref{eq_thaX8yohPeilohNgo3lae7ei} is the natural generalization of \eqref{eq_Iengeexaigheelaigh6uo1ei} to stratified homogeneous groups.

If \(A(D) = D\), then \eqref{eq_thaX8yohPeilohNgo3lae7ei} is an endpoint Sobolev inequality which is known to hold \citelist{\cite{Garofalo_Nhieu_1996}\cite{Franchi_Lu_Wheeden_1995}}.
In the particular case, where \(G\) is a Heisenberg group, Baldi, Franchi and Pansu have proved that \eqref{eq_thaX8yohPeilohNgo3lae7ei} holds when \(A (D)\) is a (first- or second-order) operator of the Rumin complex \citelist{\cite{Baldi_Franchi_2013}\cite{Baldi_Franchi_Pansu_2016}}; their proof relies on the structure of the Rumin complex and on a Bourgain--Brezis duality estimate on stratified homogeneous groups \cite{Chanillo_VanSchaftingen_2009} generalizing the Euclidean results \citelist{\cite{VanSchaftingen_2004_ARB}\cite{VanSchaftingen_2004_L1}\cite{VanSchaftingen_2008}}.

Our main result asserts that for a homogeneous left-invariant linear partial differential operator \(A(D) \colon C^{\infty}(G;V) \to C^{\infty}(G;E)\) of order \(k\) as above, if 
\begin{enumerate}[(i)]
\item \label{condAMH} \(A(D)\) is maximally hypoelliptic, that is, if there exists some \(C > 0\) such that 
\begin{equation*}
\|D^k u\|_{L^2(G;V)} \leq C \|A(D) u\|_{L^2(G;E)}
\end{equation*}
for all \(u \in C^\infty_c (G;V)\), and 
\item \label{condLD} there exists a finite dimensional inner product space \(F\), a positive integer \(\ell\) and some linear homogeneous left-invariant partial differential operator \(L(D) = \sum_{\lambda \in \Ind_{\ell}} B^{\lambda} X_{\lambda}\) of order \(\ell\) on \(G\) from \(E\) to \(F\) (so each \(B^{\lambda} \in \End(E;F)\)) such that the symmetrized operator \(\operatorname{Sym}(L)(D)\) is cocanceling, that is
\begin{equation*}
\bigcap_{\xi \in \R^m} \Big\{ e \in E \st \sum_{\lambda \in \Ind_{\ell}} \xi^\lambda B^\lambda [e] = 0 \Big\} = \{0\},
\end{equation*}
\end{enumerate}
then \eqref{eq_thaX8yohPeilohNgo3lae7ei} holds for all \(u \in C^{\infty}_c(G;V)\) (see Theorem~\ref{thm:can_est} below).
Under the above hypotheses \eqref{condAMH} and \eqref{condLD} on $A(D)$, we also obtain Hardy--Sobolev inequalities of the form 
\begin{equation} \label{eq:Hardy_introduction}
\left\| \frac{D^{k-\ell} u}{\|x\|^{\ell}} \right\|_{L^1(G;V)} \leq C \| A(D) u \|_{L^1(G;E)}
\end{equation}
where \(\|x\|\) is a homogeneous norm on \(G\) and \(\ell \in \{1, \dots, \min\{k,Q-1\}\}\) (see Theorem~\ref{thm:Hardy}(a) below); when \(\ell = 1\) this is the analog of \eqref{eq_Iengeexaigheelaigh6uo1ei2} on \(G\). Similarly, if \(k \ge Q\), the same assumptions on \(A(D)\) implies the \(L^{\infty}\) estimate 
\begin{equation*}
\|D^{k-Q} u\|_{L^{\infty}(G;V)} \leq C \| A(D) u \|_{L^1(G;E)} 
\end{equation*}
which holds for all \(u \in C^{\infty}_c(G;V)\) (see Theorem~\ref{thm:Hardy}(b) below).

When \(G\) is the Euclidean space \(\R^n\), both the ellipticity of \(A(D)\) and the existence of \(L(D)\) as above are known to be necessary for \eqref{eq_thaX8yohPeilohNgo3lae7ei} to hold (see \cite{VanSchaftingen_2013}). 
Another reason why the maximal hypoellipticity condition on \(A(D)\) is a natural one is because under this condition, one has, for all \(u \in C^\infty_c (G;V)\) and all \(1 < p < \infty\), that
\begin{equation} \label{eq:Lpintro}
\|D^k u\|_{L^p(G;V)} \leq C_p \|A(D) u\|_{L^p(G;E)}
\end{equation}
(see e.g.\ Theorem~\ref{thm:kernel_main} below), which implies both
\begin{equation} \label{eq:Lpimproving}
\|D^{k-1} u\|_{L^{Q p/(Q-p)}(G;V)} \leq C_p \|A(D) u\|_{L^p(G;E)}
\end{equation}
for \(1 < p < Q\) via Sobolev embedding, and
\begin{equation} \label{eq:LpHardy}
\left\| \frac{D^{k-\ell} u}{\|x\|^{\ell}} \right\|_{L^p(G;V)} \leq C_p \|A(D) u\|_{L^p(G;E)}
\end{equation}
for \(\ell \in \{1, \dots, \min\{k,Q-1\}\}\), \(1 < p < Q/\ell\) via Hardy's inequality for \(D^{\ell}\).
(Indeed, if \(\ell \in \{1, \dots, Q-1\}\), then for every \(u \in C^{\infty}_c(G;\R)\), one has
\begin{equation*}
|u(x)| \leq C \int_G |D^{\ell} u(x y^{-1})|\frac{1}{\|y\|^{Q-\ell}} dy
\end{equation*}
(this will follow e.g.\ from Theorem~\ref{thm:kernel_main} below). By Ciatti, Cowling and Ricci \cite{MR3336090}*{Theorem A}, one then has
\begin{equation*}
\left\| \frac{u(x)}{\|x\|^{\ell}} \right\|_{L^p(G;\R)} \leq C_p \| D^{\ell} u(x) \|_{L^p(G;\R)}
\end{equation*}
for all \(1 < p < Q/\ell\)
(see also Ruzhansky and Suragan \cite{MR3966452}*{Theorem 7.1.1}).
It follows that if \(u \in C^{\infty}_c(G;V)\), \(k \in \N\), \(\ell \in \{1,\dots,\min\{k,Q-1\}\}\)  and \(1 < p < Q/\ell\), then
\begin{equation*}
\left\| \frac{D^{k-\ell} u}{\|x\|^{\ell}} \right\|_{L^p(G;V)} \leq C_p \|D^k u\|_{L^p(G;V)},
\end{equation*}
which together with \eqref{eq:Lpintro} implies \eqref{eq:LpHardy}.)
The estimates \eqref{eq_thaX8yohPeilohNgo3lae7ei} and \eqref{eq:Hardy_introduction} are then respectively a limiting case of \eqref{eq:Lpimproving} and \eqref{eq:LpHardy} as \( p \to 1^+ \). It may be worth pointing out that the maximal hypoellipticity condition on \(A(D)\) is actually equivalent (under our other assumptions on \(A(D)\)) to a hypoellipticity condition on \(A^t(D) A(D)\); see again Theorem~\ref{thm:kernel_main}.
It is an interesting open question whether the validity of \eqref{eq_thaX8yohPeilohNgo3lae7ei} for all \(u \in C^{\infty}(G;V)\) implies the conditions \eqref{condAMH} and \eqref{condLD} on \(A(D)\) on a general stratified homogeneous group \(G\).

In order to apply our main theorems, given the operator \(A(D)\) one must construct a compatible \(L(D)\) as in condition \eqref{condLD} above. In Proposition~\ref{prop:cocan_preserve} we develop a robust way of doing so that works in many examples of interest. In particular, in Proposition \ref{prop:subelliptic_highergradient} below, we obtain the Gagliardo--Nirenberg--Sobolev type estimate
\begin{equation*}
 \Vert D^{k - 1} u \Vert_{L^{Q/(Q - 1)}(G)}
 \le C \sum_{i = 1}^m \lVert X_i^k u\rVert_{L^1 (G)}
\end{equation*}
for every \(u \in C^\infty_c (G; \R)\), and if \(k \ge Q\), we get an \(L^{\infty}\) estimate
\begin{equation*}
 \Vert D^{k - Q} u \Vert_{L^{\infty }(G)}
 \le C \sum_{i = 1}^m \lVert X_i^k u\rVert_{L^1 (G)}.
\end{equation*}
We also obtain, in Theorem \ref{thm:Korn_Sobolev} below, a Korn--Sobolev type estimate
\begin{equation*}
 \Vert u \Vert_{L^{Q/(Q - 1)}(G)}
 \le C \sum_{i = 1}^m \lVert X_i u_j + X_j u_i\rVert_{L^1 (G)},
\end{equation*}
for every \(u \in C^\infty_c (G; \g_1)\); this Korn--Sobolev inequality was known in the Euclidean case \cite{Strauss_1973}, but it seems to be new in the case of any other stratified homogeneous group. Furthermore, certain limiting Hardy inequalities are obtained for the operators \(u \mapsto (X_1^k u, \dots, X_m^k u)\) and \((u_1,\dots,u_m) \mapsto (X_i u_j + X_j u_i)_{1 \leq i \leq j \leq m}\) in Section~\ref{sect:applications} if \(G \ne \R\) (i.e.\ if \(Q \ge 2\)).

Our proofs of the estimates rely on a representation formulas through a construction of fundamental solution which extends to high-order operators the result of Baldi, Franchi and Tesi \cite{BaldiFranchiTesi2006} (which is the content of Theorem \ref{thm:kernel_prelim}, Corollary \ref{cor:kernel0} and Theorem \ref{thm:kernel_main} below), the Bourgain--Brezis duality estimate on stratified homogeneous groups \cite{Chanillo_VanSchaftingen_2009} (see Lemma~\ref{lem:CVS} and its variant Proposition~\ref{prop:Hardy_dual} below) and various tools to construct \(L (D)\) (as in Proposition~\ref{prop:cocan_preserve} and Lemma~\ref{lem:cocan_open}).

\section{Set up and Preliminaries}

Let $G$ be a stratified homogeneous group and $\g$ be its Lie algebra. The exponential map \(x \in \g \mapsto \exp(x) \in G\) defines a global coordinate chart on \(G\), and allows one to identify \(G\) with \(\g\); we will always use this global coordinate chart to identify \(G\) with a Euclidean space (in particular, the identity element of \(G\) will be denoted by \(0\), and \(G\) inherits the Lebesgue measure \(dx\) from the underlying Euclidean space; note \(dx\) is then the Haar measure on \(G\)). The homogeneous dilation on \(G\) is given by \(x \mapsto \delta_{\lambda} x\), where 
\begin{equation*}
  \delta_{\lambda} x \defeq (\lambda x_1, \lambda^2 x_2, \dotsc, \lambda^r x_r)
\end{equation*} 
if \(\lambda > 0\) and \(x = (x_1, x_2, \dotsc, x_r) \in \g_1 \oplus \g_2 \oplus \dotsb \oplus \g_r\); for any \(\lambda > 0\), \(\delta_{\lambda}\) is an automorphism of \(G\), and the pushforward of \(dx\) by \(\delta_{\lambda}\) is  
\begin{equation} \label{eq:Q_dilate}
  (\delta_{\lambda})_* (dx) = \lambda^{-Q} dx,
\end{equation} 
where \(Q\) is the homogeneous dimension of \(G\) defined in \eqref{eq:Qdef}. For \(d \in \R\), a function \(\phi \colon G \to \R\) is said to be homogeneous of degree \(d\) whenever 
\begin{equation*}
\phi \circ \delta_{\lambda} (x) = \lambda^d \phi(x)
\end{equation*}
for every \(x \in G \setminus \{0\}\) and every \(\lambda > 0\). An example is given by the \emph{homogeneous norm} function on \(G\), defined by
\begin{equation*}
\|x\| \defeq \left( \sum_{j=1}^r |x_j|^{\frac{2r!}{j}} \right)^{\frac{1}{2r!}}
\end{equation*}
if \(x = (x_1, \dotsc, x_r) \in \g_1 \oplus \g_2 \oplus \dotsb \oplus \g_r\); this function is homogeneous of degree \(1\) and \(C^{\infty}\) on \(G \setminus \{0\}\).

Throughout this paper, we will write \(X_1, \dotsc, X_m\) for a basis of \(\g_1\) (in particular, \(m = \text{dim}\, \g_1\)). Each vector \(X_i\), \(i=1, \dotsc, m\), gives rise to a left-invariant vector field on \(G\), which by abuse of notation we also denote by \(X_i\):
\begin{equation*}
X_i \phi(x) \defeq \left. \frac{d}{ds} \right|_{s=0} \phi(x \exp(s X_i))
\end{equation*}
for \(\phi \in C^{\infty}(G;\R)\). The vector field \(X_i\) is left-invariant because it commutes with left translations (i.e.\ \(X_i [\phi(yx)] = (X_i \phi)(yx)\) for any \(y \in G\) and any \(\phi \in C^{\infty}(G;\R)\)). 
We will also denote by 
\begin{equation*}
  D \phi = (X_1 \phi, \dotsc, X_m \phi)
\end{equation*} 
the subelliptic gradient of any function \(\phi \in C^{\infty}(G;\R)\). 

Let \(\N\) denote the set of positive integers and \(\N_0 \defeq \N \cup \{0\}\). For \(k \in \N\), let \(T_k(\g_1)\) be the \(k\)-fold tensor product of \(\g_1\). 
We will write \(\Ind_k\) for the index set \(\{1,\dotsc,m\}^k\), and 
\begin{equation*}
X^{\otimes}_{\gamma} = X_{\gamma_1} \otimes \dots \otimes X_{\gamma_k} \quad \text{if \(\gamma = (\gamma_1, \dotsc, \gamma_k) \in \Ind_k\)},
\end{equation*}
so that \(\{X^{\otimes}_{\gamma}\}_{\gamma \in \Ind_k}\) is a basis of \(T_k(\g_1)\). 
One then has a linear surjection from \(T_k(\g_1)\), to the vector space of all homogeneous left-invariant linear partial differential operator of order \(k\) on \(G\) with real coefficients, given by 
\begin{equation*}
X^{\otimes}_{\gamma} \mapsto X_{\gamma},
\end{equation*} 
where \(X_{\gamma}\) is the differential operator defined in \eqref{eq:Xgamma_def}. The operator \(X_{\gamma}\) is homogeneous of order \(k\), because it sends every homogeneous function in \(C^{\infty}(G;\R)\) to another homogeneous function whose degree is \(k\) lower. 
It is known (see e.g.\ \cites{MR657581,MR2504877}) that if \(k \in \N_0\) and \(f \in C^{\infty}(G;\R)\) satisfies \(X_{\gamma} f = 0\) for all \(\gamma \in \Ind_k\), then \(f\) is a polynomial of \(x\) on \(G\) of non-isotropic degree less then \(k\), i.e.\ \(f(x)\) is a linear combination of 
\begin{equation*}
    x^{\alpha} \defeq x_1^{\alpha_1} \dotsm x_r^{\alpha_r}
\end{equation*} 
where \(\alpha = (\alpha_1, \dotsc, \alpha_r) \in \N_0^{\dim \g_1} \times \cdots \times \N_0^{\dim \g_r} = \N_0^{\dim \g}\) satisfies \(\|\alpha\| < k\); henceforth
\begin{equation*}
\|\alpha\|\defeq\sum_{j=1}^r j |\alpha_j|.
\end{equation*}

Later we will also need right-invariant versions of \(X_1, \dotsc, X_m\). They are defined by 
\begin{equation*} 
    X^R_i \phi(x) = \left. \frac{d}{ds} \right|_{s=0} \phi(\exp(sX_i) x)
\end{equation*} 
for \(i = 1,\dotsc, m\)  and \(\phi \in C^{\infty}(G;\R)\); equivalently, 
\begin{equation*}
    X^R_i \phi = -\widetilde{ X_i \tilde{\phi}}
\end{equation*} 
where \(\tilde{\phi}(x) \defeq \phi(x^{-1})\). We write \(D^R \phi = (X^R_1 \phi, \dotsc, X^R_m \phi)\), and write 
\begin{equation*}
  X^R_{\gamma} \defeq X^R_{\gamma_1} \dotsm X^R_{\gamma_k}
\end{equation*} 
if \(\gamma = (\gamma_1, \dotsc, \gamma_k) \in \Ind_k\); it is also homogeneous of order \(k\). If \(\phi\) is a \(C^{\infty}\) function on \(G\) taking values in a real vector space, then for each \(k \in \N\) and \(\gamma \in \Ind_k\), \(X_{\gamma} \phi\) and \(X^R_{\gamma} \phi\) are defined componentwise.

Derivatives with respect to the coordinates \(x\) on \(G\) will be denoted by \(\partial_x^{\alpha}\) where \(\alpha \in \N_0^{\dim \g}\) is a multiindex; they are typically neither left nor right-invariant and we will only use them in local considerations.

Let \(\D(G) \defeq C^{\infty}_c(G;\R)\) denote the space of test functions on \(G\). Upon identifying \(G\) with the underlying Euclidean space, \(\D(G)\) is endowed with a \(\mathcal{LF}\)-topology (strict inductive limit of Fr\'echet spaces, see e.g.\ Rudin \cite{Rudin}, Grubb \cite{Grubb}), which we recall as follows. 
For any compact subset \(K\) of \(G\), let \(\D(K)\) be the set of all \(\phi \in \D(G)\) with support contained in \(K\). \(\D(K)\) is equipped with the usual Fr\'{e}chet (i.e.\ locally convex, metrizable and complete) topology, via a countable family of separating seminorms \(\{\|\cdot\|_{C^n(K)} \colon n \in \N_0\}\) where 
\begin{equation*}
\|\phi\|_{C^n(K)} \defeq \sum_{|\alpha| \leq n} \sup_{x \in K} |\partial_x^{\alpha} \phi(x)|
\end{equation*}
for \(\phi \in \D(K)\); here \(|\alpha|\) is the length of the multiindex \(\alpha \in \N^{\dim \g}\) defined by \(|\alpha| \defeq \sum_{i=1}^{\dim \g} \alpha_i\) if \(\alpha = (\alpha_1, \dotsc, \alpha_{\dim \g})\). 
A sequence \((\phi_i)_{i \in \N}\) of \(\D(K)\) converges in the topology of \(\D(K)\) to some \(\phi \in \D(K)\), if and only if \begin{equation*}\lim_{j \to \infty} \|\phi_j - \phi\|_{C^n(K)} = 0\end{equation*} for all \(n \in \N_0\); indeed, if \(U_{n,\varepsilon} \defeq \{\phi \in \D(K) \colon \|\phi\|_{C^n(K)} < \varepsilon\}\) then \(\{U_{n,\varepsilon} \colon n \in \N_0, \varepsilon > 0\}\) is a local basis for the system of neighborhoods at \(0 \in \D(K)\) (cf.\ \cite{Grubb}*{Remark B.6}).
From this it is not difficult to show that a linear functional \(u \colon \D(K) \to \R\) is continuous, if and only if there exists \(n \in \N_0\) and \(c > 0\) such that
\begin{equation*}
|u(\phi)| \leq c \|\phi\|_{C^n(K)}
\end{equation*}
for all \(\phi \in \D(K)\); similarly, a linear operator \(T \colon \D(K) \to \D(K)\) is continuous, if and only if for any \(n \in \N_0\), there exists \(n' \in \N_0\) and \(C > 0\) such that
\begin{equation*}
\|T \phi\|_{C^n(K)} \leq C \|\phi\|_{C^{n'}(K)}
\end{equation*} 
for all \(\phi \in \D(K)\) (cf.\ \cite{Grubb}*{Lemma B.7}).
If \((K_i)_{i \in \N}\) is an increasing sequence of compact subsets of \(G\) that exhaust \(G\), then \(\D(K_i) \hookrightarrow \D(K_{i'})\) is continuous whenever \(i < i'\), and \begin{equation*}
    \D(G) = \bigcup_{i \in \N} \D(K_i)
\end{equation*} 
is endowed with the direct limit topology in the category of locally convex topological vector spaces, i.e.\ the finest (a.k.a.\ strongest) locally convex topology on \(\D(G)\) so that the inclusions \(\D(K_i) \hookrightarrow \D(G)\) is continuous for every \(i \in \N\). 
This topology is independent of the choice of the compact exhaustion of \(G\) (and finer than the topology on \(\D(G)\) inherited from the Fr\'{e}chet topology of \(C^{\infty}(G)\), which we will never use).
With the topology of \(\D(G)\) in place, the space \(\D'(G)\) of all real distributions on \(G\) is then defined as the space of all continuous linear functionals from \(\D(G)\) to \(\R\); as usual \(\D'(G)\) is equipped with the weak* topology. 
More concretely, a sequence \((\phi_j)_{j \in \N}\) of \(\D(G)\) converges in the topology of \(\D(G)\) to some \(\phi \in \D(G)\), if and only if there exists \(i \in \N\) so that \(\phi, \phi_j\) are supported in \(K_i\) for every \(j \in \N\), and \(\phi_j\) converges to \(\phi\) in the topology of \(\D(K_i)\). 
A linear functional \(u \colon \D(G) \to \R\) is in \(\D'(G)\), if and only if for any \(i \in \N\), \(u\) restricts to a continuous linear functional on \(K_i\). 
A sequence \((u_j)_{j \in \N}\) of \(\D'(G)\) converges in the topology of \(\D'(G)\) to some \(u \in \D'(G)\), if and only if \(u_j(\phi) \to u(\phi)\) for every \(\phi \in \D(G)\). Finally, a linear operator \(T \colon \D(G) \to \D(G)\) is continuous, if and only if its restriction \(T \colon \D(K_i) \to \D(G)\) is continuous for every \(i \in \N\) (this will be the case, for instance, if \(T\) restricts to a continuous operator from \(\D(K_i)\) to \(\D(K_i)\) for every \(i \in \N\)); 
it then induces by duality a continuous linear operator \(T^* \colon \D'(G) \to \D'(G)\), via \((T^*u)(\phi) = u(T \phi)\) for all \(\phi \in \D(G)\).

If \(V\) is a finite dimensional inner product space over \(\R\), we write \(\D(G;V) \defeq \D(G) \otimes V\) (the space of \(V\)--valued test functions), and \(\D'(G;V) \defeq \D'(G) \otimes V\) (the space of \(V\)--valued distributions); they are isomorphic to products of finitely many copies of \(\D(G)\) and \(\D'(G)\) respectively, and hence are endowed with the corresponding topologies. The natural pairing between \(\D(G;V)\) and \(\D'(G;V)\) will be denoted by \(\langle \cdot, \cdot \rangle_{V,G}\). Similarly we define \(C^{\infty}(G;V) \defeq C^{\infty}(G) \otimes V\), \(L^p(G;V) \defeq L^p(G) \otimes V\) for \(1 \leq p \leq \infty\), and \(H^s(G;V) \defeq H^s(G) \otimes V\) for \(s \in \R\) with the natural \(H^s(G;V)\) inner product; also, \(\D(K;V) \defeq \D(K) \otimes V\) if \(K\) is a compact subset of \(G\), and \(\D(K;V)\) is endowed with the natural topology from \(\D(K)\). 

A distribution \(u \in \D'(G;V)\) is said to be homogeneous of degree \(d \in \R\), whenever
\begin{equation*}
u \circ \delta_s = s^d u\end{equation*} for all \(s > 0\); here \(u \circ \delta_s\) is defined by
\begin{equation*}
\langle u \circ \delta_s, \phi \rangle_{V,G} = \langle u, s^{-Q} \phi \circ \delta_{s^{-1}} \rangle_{V,G} 
\end{equation*}
for all \(\phi \in \D(G;V)\) and all \(s > 0\). For instance, from now on, let \(\delta \in \D'(G)\) be the Dirac delta at \(0\). Then for any multiindex \(\alpha \in \N_0^{\dim \g}\), the distribution \(\partial_x^{\alpha} \delta \otimes v_{\alpha} \in \D'(G;V)\) is  homogeneous of degree \(-Q-\|\alpha\|\) where \(v_{\alpha}\) is any vector in \(V\). 
Later we will need the fact that if \(u \in \D'(G;V)\) is supported at \(\{0\}\), then \(u\) is a finite linear combination of such \(\partial_x^{\alpha} \delta \otimes v_{\alpha}\); in particular, if in addition \(u\) is homogeneous of some degree \(d > -Q\), then \(u = 0\). 

Let \(V\), \(E\) be finite dimensional inner product spaces over \(\R\) and \(k \in \N\). We will always identify \(V\) with its dual space \(V^*\) via the inner product, and similarly for \(E\), so that \(\End(V;E) = E \otimes V\). The space \(\End(V;E) \otimes T_k(\g_1)\) can be viewed as either the space of all linear maps from \(V\) to \(E\) whose coefficients are \(k\) tensors on \(\g_1\), or the space of all degree \(k\) tensors on \(\g_1\) whose coefficients are linear maps from \(V\) to \(E\); we usually adopt the second point of view. 
The formal adjoint \(A^t(D)\) of \(A(D)\) is an operator \(A^t(D) \colon C^{\infty}(G;E) \to C^{\infty}(G;V)\), given by
\begin{equation*}
A^t(D) \defeq \sum_{\gamma \in \Ind_k} (A^{\gamma})^t X_{\gamma}^t
\end{equation*}
where \((A^{\gamma})^t\in \End(E;V)\) is the transpose of \(A^{\gamma}\), and 
\begin{equation*}
(X_{\gamma})^t = (-1)^k X_{\gamma_k} \dots X_{\gamma_1}
\end{equation*} 
if \(\gamma = (\gamma_1, \dotsc, \gamma_k) \in \Ind_k\). Since \(A^t(D) \colon \D(G;V) \to \D(G;E)\) is continuous (in fact, \(A^t(D) \colon \D(K;V) \to D(K;E)\) is continuous for every compact subset \(K\) of \(G\)), we may extend \(A(D)\) as a continuous operator \(A(D) \colon \D'(G;V) \to \D'(G;E)\) via
\begin{equation*}
\langle A(D) u, \phi \rangle_{E,G} \defeq \langle u, A^t(D) \phi \rangle_{V,G}
\end{equation*}
if \(u \in \D'(G;V)\) and \(\phi \in \D(G;E)\). The operator \(A(D)\) is said to be \emph{hypoelliptic}, whenever for any \(u \in \D'(G;V)\) and any open set \(\Omega \subset G\), \(A(D) u \in C^{\infty}(\Omega;E)\) implies \(u \in C^{\infty}(\Omega;V)\). Similarly, we may extend \(A^t(D)\) as a continuous operator \(A^t(D) \colon \D'(G;E) \to \D'(G;V)\), and define what it means for \(A^t (D)\) to be hypoelliptic. 

Let \(k \in \N\) and \(A \in \End(V;E) \otimes T_k(\g_1)\). Given \(\psi \in \D(G;V)\) we would eventually like to achieve two goals:
\begin{enumerate}[(a)]
\item \label{firstgoal} solve the equation \(A^t(D) u = \psi\), and
\item \label{secondgoal} recover \(\psi\) from \(A(D)\psi\),
\end{enumerate}
both under appropriate conditions on \(A(D)\); this will be accomplished in Theorem~\ref{thm:kernel_main} below. Due to the invariance of \(A(D)\) under left translations, a useful tool in achieving these goals is convolutions on \(G\), whose basics we review next.

First, if \(\psi, \phi \in \D(G;\R)\) then the convolution of \(\psi\) with \(\phi\) is defined to be
\begin{equation*}
\psi*\phi(x) = \int_G \psi(y) \phi(y^{-1} x) \, dy 
= \int_G \psi(x y^{-1}) \phi(y) \, dy \in \D(G).
\end{equation*}
Note that if \(G\) is not abelian then generally \(\psi*\phi \ne \phi*\psi\). It is easy to check that if \(1 \leq i \leq m\), then
\begin{equation*}
X_i (\psi*\phi) = \psi * (X_i \phi), \quad X^R_i (\psi*\phi) = (X^R_i \psi)*\phi, \quad \text{and} \quad (X_i \psi) * \phi = \psi * (X^R_i \phi)
\end{equation*}
for all \(\psi, \phi \in \D(G)\). If \(\psi \in \D(G)\) and \(u \in \D'(G)\), then \(\psi*u \in \D'(G)\) is defined by duality as 
\begin{equation*}
(\psi*u)(\phi) = u(\tilde{\psi}*\phi)
\end{equation*}
where \(\tilde{\psi}(x) \defeq \psi(x^{-1})\). We still have 
\begin{equation*}
X_i (\psi*u) = \psi * (X_i u), \quad X^R_i (\psi*u) = (X^R_i \psi)*u, \quad \text{and} \quad (X_i \psi) * u = \psi * (X^R_i u)
\end{equation*}
for \(\psi \in \D(G)\) and \(u \in \D'(G)\).

Now let \(\psi \in \D(G;V)\) and \(\K \in \D'(G;\End(V;E)) = \D'(G;E \otimes V)\). Let \(\{v_i\}_{i=1}^{\dim V}\) and \(\{e_j\}_{j=1}^{\dim E}\) be orthonormal bases of \(V\) and \(E\) respectively, and write 
\begin{equation*}
\psi(x) = \sum_{i=1}^{\dim V} \psi_i(x) v_i \quad \text{and} \quad \K(x) = \sum_{i=1}^{\dim V} \sum_{j=1}^{\dim E} \K_{ij}(x) e_j \otimes v_i.
\end{equation*} 
Then the convolution \(\psi * \K \in \D'(G;E)\) is defined by
\begin{equation*}
\psi * \K = \sum_{j=1}^{\dim E} \sum_{i=1}^{\dim V} [\psi_i * \K_{ij}] e_j.
\end{equation*}
It satisfies
\begin{equation*}
\langle \psi*\K, \Phi \rangle_{E,G} = \langle \K, \tilde{\psi}*\Phi \rangle_{E \otimes V,G}
\end{equation*}
for all \(\Phi \in \D(G;E)\) where 
\begin{equation*}
\tilde{\psi}*\Phi(x) \defeq \sum_{i=1}^{\dim V} \sum_{j=1}^{\dim E} [\tilde{\psi}_i * \Phi_j(x)] e_j \otimes v_i \in \D(G;E \otimes V)
\end{equation*}
if \(\Phi = \sum_{j=1}^{\dim E} \Phi_j(x) e_j\). 

To proceed further, since \(\End(V;V) = V \otimes V\) and \(\End(V;E) = E \otimes V\), the operator \(A(D) \colon \D(G;V) \to \D(G;E)\) extends via tensor product with \(V\) as a continuous operator \(A(D) \colon \D(G;\End(V;V)) \to \D(G;\End(V;E))\), so that \(A(D)(\psi \otimes v) \defeq (A(D)\psi) \otimes v\) if \(\psi \in \D(G;V)\) and \(v \in V\); in particular, if \(\psi, \phi \in \D(G;V)\), \(\psi(x) = \sum_{i=1}^{\dim V} \psi_i(x) v_i\), \(\phi(x) = \sum_{j=1}^{\dim V} \phi_j(x) v_j\) with 
\begin{equation*}
\tilde{\psi}*\phi \defeq \sum_{i=1}^{\dim V} \sum_{j=1}^{\dim V} [\tilde{\psi}_i*\phi_j] v_j \otimes v_i,
\end{equation*}
then
\begin{equation*}
A(D)[\tilde{\psi} * \phi] = \tilde{\psi} * A(D) \phi.
\end{equation*}
We may then define \(A^t(D) \colon \D'(G;\End(V;E)) \to \D'(G;\End(V;V))\), so that if \(u \in \D'(G;E)\) and \(v \in V\), then \(A^t(D)(u \otimes v) \defeq (A^t(D)u) \otimes v\). If \(\K \in \D'(G;\End(V;E))\) is such that 
\begin{equation} \label{eq:K_solves_AtD}
A^t(D) \K = \delta \otimes I \quad \text{on \(G\)}
\end{equation} 
where \(\delta\) is the delta function at \(0 \in G\) and \(I\) is the identity map on \(V\), then 
\begin{equation*}
A^t(D) [\psi*\K] = \psi 
\end{equation*}
for all \(\psi \in \D(G;V)\), because then for every \(\phi \in \D(G;V)\) we have
\begin{equation*}
\begin{split}
\langle \psi*\K, A(D) \phi \rangle_{E,G} &= \langle \K, \tilde{\psi}*A(D) \phi \rangle_{E \otimes V,G} \\
&= \langle \K, A(D)[\tilde{\psi} * \phi] \rangle_{E \otimes V,G} \\
& = \langle \delta \otimes I, \tilde{\psi} * \phi \rangle_{V \otimes V,G} \\
&= \langle \psi, \phi \rangle_{V,G}.
\end{split}
\end{equation*}
Hence our goal (\ref{firstgoal}) above reduces to the construction of \(\K \in \D'(G;\End(V;E))\) so that \eqref{eq:K_solves_AtD} is satisfied. Indeed, under suitable hypothesis on \(A\), we will construct some \(\K^{\circ} \in \D'(G;\End(V;V))\) for which \(A^t(D) A(D) \K^{\circ} = \delta \otimes I\), and set \(\K \defeq A(D) \K^{\circ}\). The distribution \(\K^{\circ}\) will also allow us to construct \(\tilde{\K} \in \D'(G;\End(E;V))\) such that \(\psi - (A(D)\psi)*\tilde{\K}\) is in the nullspace of \(A^t(D) A(D)\) for any \(\psi \in \D(G;V)\). If further $k < Q$, we will see that \(\psi - (A(D)\psi)*\K = 0\), which achieves our goal (\ref{secondgoal}) above.

Finally, for \(\ell \in \N\), a distribution \(\K \in \D'(G;\End(V;E))\) is said to be a kernel of type \(\ell\), if \(\K\) is homogeneous of degree \(\ell-Q\) and \(C^{\infty}\) on \(G \setminus \{0\}\). Since $\ell > 0$, necessarily such \(\K\) are given by integration against an \(\End(V;E)\)--valued function, that is homogeneous of degree \(\ell-Q\) and \(C^{\infty}\) on \(G \setminus \{0\}\). Furthermore, for every \(\gamma \in \Ind_{\ell}\), the map 
\begin{equation*}
\psi \mapsto \psi * X_{\gamma} \K,
\end{equation*} 
initially defined for \(\psi \in D(G;V)\), extends to a bounded linear operator from \(L^p(G;V)\) to \(L^p(G;E)\) for every \(1 < p < \infty\); this is a consequence of Calder\'{o}n-Zygmund theory on \(G\), and follows, for instance, from Theorem~4 in \cite{Stein}*{Chapter XIII, Section 5.3}. It should be noted that so far we refrained from defining kernels of type \(\ell\) when \(\ell \leq 0\); in that case, the correct definition of a kernel of type \(\ell\) should involve certain additional cancellation conditions, which we will not go into.

\section{A left inverse to \texorpdfstring{\(A(D)\)}{A(D)} and a right inverse to \texorpdfstring{\(A^t(D)\)}{Aᵗ(D)}}

Our first result provides a first sufficient condition on \(A \in \End(V;E) \otimes T_k(\g_1)\) under which the equation \(A^t(D) u = \psi\) may be solved for every \(\psi \in \D(G;V)\). 

\begin{theorem} \label{thm:kernel_prelim}
Let \(k \in \N\), \(V, E\) be finite dimensional inner product spaces over \(\R\), and \(A \in \End(V;E) \otimes T_k(\g_1)\). Suppose \(A(D) \colon \D'(G;V) \to \D'(G;E)\) and \(A^t (D) \colon \D'(G;E) \to \D'(G;V)\) are both hypoelliptic. Then there exists \(\K \in \D'(G;\End(V;E))\) such that 
\begin{equation*}
A^t(D) \K = \delta \otimes I \quad \text{on \(G\)}
\end{equation*}
where \(\delta\) is the delta function at \(0 \in G\) and \(I\) is the identity map on \(V\).
Furthermore, if \(k < Q\), then \(\K\) is a kernel of type \(k\); on the other hand, if \(k \geq Q\), then there exists an \(\End(V;E)\)--valued \(C^{\infty}\) function \(\K_{\infty}\) on \(G \setminus \{0\}\) that is homogeneous of degree \(k-Q\), and a homogeneous \(\End(V;E)\)--valued polynomial \(P\) of degree \(k-Q\), such that 
\begin{equation*}
\K = \K_{\infty}(x) + P(x) \log \|x\| \quad \text{on \(G\)}
\end{equation*}
(in the sense that \(\K\) is distribution given by integration against the right hand side). 
\end{theorem}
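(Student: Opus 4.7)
The plan is to construct an auxiliary distribution $\K^\circ \in \D'(G;\End(V;V))$ satisfying
\begin{equation*}
A^t(D) A(D) \K^\circ = \delta \otimes I,
\end{equation*}
and then define $\K \defeq A(D) \K^\circ$, so that $A^t(D) \K = A^t(D) A(D) \K^\circ = \delta \otimes I$ immediately. This converts the problem to constructing a fundamental solution for the single left-invariant homogeneous operator $L \defeq A^t(D) A(D)$ of order $2k$. I would first verify that $L$ is hypoelliptic: if $L u$ is smooth on some open set $\Omega$, hypoellipticity of $A^t(D)$ forces $A(D) u$ to be smooth on $\Omega$, whence hypoellipticity of $A(D)$ forces $u$ to be smooth on $\Omega$. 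The operator $L$ is also formally self-adjoint, non-negative on $\D(G;V)$ (since $\dualprod{L u}{u}_{V,G} = \norm{A(D) u}_{L^2}^2$), and has matrix coefficients in $\End(V;V)$.

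The core of the proof is the construction of $\K^\circ$, extending to higher-order operators the classical result of Folland for the sub-Laplacian. A natural route is via the heat semigroup $e^{-t L}$ on $L^2(G;V)$: self-adjointness, non-negativity, left-invariance and hypoellipticity of $L$ give for each $t>0$ a smooth left-invariant convolution kernel $p_t(x)$ with the scaling property $p_t \circ \delta_\lambda = \lambda^{-Q} p_{t/\lambda^{2k}}$ inherited from the homogeneity of $L$. When $2k < Q$, the integral $\K^\circ \defeq \int_0^\infty p_t \, dt$ converges to a distribution that is homogeneous of degree $2k - Q$ and smooth on $G \setminus \{0\}$, giving a kernel of type $2k$. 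When $2k \geq Q$, the naive integral diverges at infinity and must be renormalized by subtracting off a suitable polynomial tail, producing a logarithmic correction: one obtains $\K^\circ = \K^\circ_\infty + P^\circ(x) \log \|x\|$ with $\K^\circ_\infty$ smooth on $G \setminus \{0\}$ and $P^\circ$ a homogeneous $\End(V;V)$-valued polynomial of non-isotropic degree $2k - Q$. This step, which extends the analysis of Baldi, Franchi and Tesi from order two to arbitrary order, is the principal technical obstacle.

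It then remains to transfer the structural information from $\K^\circ$ to $\K = A(D) \K^\circ$. Smoothness on $G \setminus \{0\}$ is inherited immediately. For the logarithmic structure I would expand via Leibniz: any $X_\gamma$ with $\gamma \in \Ind_k$ satisfies
\begin{equation*}
X_\gamma[P^\circ \log \|x\|] = (X_\gamma P^\circ) \log \|x\| + R_\gamma,
\end{equation*}
where $R_\gamma$ is a sum of products of polynomials with nontrivial derivatives of $\log \|x\|$ and is therefore $C^\infty$ on $G \setminus \{0\}$. Summing against the coefficients of $A(D)$ gives $A(D)[P^\circ \log \|x\|] = (A(D) P^\circ) \log \|x\| + R$ with $R$ smooth off the origin. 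The polynomial $A(D) P^\circ$ is homogeneous of non-isotropic degree $2k - Q - k = k - Q$; when $k < Q$ this degree is negative, so $A(D) P^\circ \equiv 0$, the log term cancels, and $\K$ is a kernel of type $k$; when $k \geq Q$, setting $P \defeq A(D) P^\circ$ yields exactly the decomposition $\K = \K_\infty + P(x) \log \|x\|$ claimed in the statement.
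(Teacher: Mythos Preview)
Your approach is correct but takes a genuinely different route from the paper's. The paper does \emph{not} pass through the self-adjoint operator $A^t(D)A(D)$ to prove Theorem~\ref{thm:kernel_prelim}; rather, it constructs $\K$ directly for $A^t(D)$. First, hypoellipticity of $A(D)$ is used, via an open-mapping-theorem argument on Fr\'echet topologies, to produce a \emph{local} fundamental solution $\K_{\mathrm{loc}}$ with $A^t(D)\K_{\mathrm{loc}}=\delta\otimes I$ on a small neighbourhood of $0$. Then, after multiplying by a cutoff to obtain $\K^{(1)}$ with $A^t(D)\K^{(1)}=\delta\otimes I+\Phi^{(1)}$ and $\Phi^{(1)}\in C^\infty$, one studies the rescalings $\K^{(\lambda)}=\lambda^{k-Q}\K^{(1)}\circ\delta_{\lambda^{-1}}$; hypoellipticity of $A^t(D)$ guarantees the derivative in $\lambda$ is smooth and compactly supported, and a direct Taylor analysis shows $\K^{(\lambda)}$ (or a polynomial-and-log corrected version when $k\ge Q$) converges in $\D'$ as $\lambda\to\infty$, yielding the global $\K$ with the stated structure.

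Your reduction to $L=A^t(D)A(D)$ and subsequent differentiation is precisely the mechanism the paper uses \emph{afterwards}, in Corollary~\ref{cor:kernel0}, to weaken the hypotheses---but there the fundamental solution $\K^\circ$ for $L$ is obtained by invoking Theorem~\ref{thm:kernel_prelim} itself (applied to the self-adjoint $L$), not by heat methods. Your heat-semigroup construction is a legitimate alternative for that step, and the final Leibniz argument transferring the structure from $\K^\circ$ to $\K=A(D)\K^\circ$ matches the paper's proof of Corollary~\ref{cor:kernel0} essentially verbatim. The trade-off: the paper's rescaling argument is elementary and self-contained, needing only soft functional analysis; your route imports heat-kernel machinery for homogeneous Rockland operators (existence, smoothness, scaling, and the renormalization when $2k\ge Q$), which is standard but which you have only sketched and would need to cite or develop carefully---this is, as you acknowledge, the real work in your outline.
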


The above theorem is essentially contained in Theorem~3.1(i,ii) of Baldi, Franchi and Tesi~\cite{BaldiFranchiTesi2009} if we impose the additional hypotheses that \(V = E\), \(A(D) = A^t(D)\) and \(k \leq Q\). Their proof, whose essence can be found in \cite{BaldiFranchiTesi2008} and \cite{BaldiFranchiTesi2006} and has its roots in Folland \cite{MR0494315}*{Theorem~2.1}, can be extended with relatively little difficulty to cover our slightly more general case in Theorem~\ref{thm:kernel_prelim}, where we allow possibly \(V \ne E\), \(A(D) \ne A^t(D)\) and \(k > Q\). We will provide a proof of Theorem~\ref{thm:kernel_prelim} in an Appendix. Roughly speaking, the hypoellipiticity assumption on \(A(D)\) allows for the construction of a local right inverse of \(A^t(D)\), while the hypoellipticity assumption on \(A^t(D)\) allows one to rescale the above local right inverse to \(A^t(D)\) to a global right inverse \(\K\).

In practice, the application of Theorem~\ref{thm:kernel_prelim} is limited by the fact that it only applies when both \(A(D)\) and \(A^t(D)\) are hypoelliptic. There are natural situations where this assumption is not satisfied; for instance, when \(G = \R^n\), \(V = \R\), \(E = \R^n\) and \(A(D) = D\) the usual gradient on \(\R^n\), then \(A(D)\) is (hypo)elliptic, but \(A^t(D) = -\textrm{div}\) is not hypoelliptic when \(n \ge 2\). Still we expect the equation \(-\textrm{div}\, u = \psi\) to be solvable for all \(\psi \in \D(G;V)\): one would first solve \(-\Delta v = \psi\) and let \(u = D v\), and this works because \(-\Delta = A^t(D) A(D)\). This argument gives us the following more robust sufficient condition under which the equation \(A^t(D) u = \psi\) is solvable for all \(\psi \in \D(G;V)\):

\begin{corollary} \label{cor:kernel0}
Let \(k \in \N\), \(V, E\) be finite dimensional inner product spaces over \(\R\), and \(A \in \End(V;E) \otimes T_k(\g_1)\). Suppose \(A^t(D) A(D) \colon \D'(G;V) \to \D'(G;V)\) is hypoelliptic. Then the conclusions of Theorem~\ref{thm:kernel_prelim} continue to hold.
\end{corollary}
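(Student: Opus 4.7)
The plan is to apply Theorem~\ref{thm:kernel_prelim} to the composed operator \(B(D) \defeq A^t(D) A(D)\), which is a homogeneous left-invariant linear partial differential operator of order \(2k\) with tensor coefficient in \(\End(V;V) \otimes T_{2k}(\g_1)\). By hypothesis, \(B(D)\) is hypoelliptic; since \(B(D)\) is formally self-adjoint (\(B^t = B\)), its formal adjoint is also hypoelliptic, so Theorem~\ref{thm:kernel_prelim} applies and produces \(\K^{\circ} \in \D'(G;\End(V;V))\) with \(A^t(D) A(D) \K^{\circ} = \delta \otimes I\). Setting \(\K \defeq A(D) \K^{\circ} \in \D'(G;\End(V;E))\) (viewing \(A(D)\) as extended to the tensor product with \(V\) as in the preliminaries), one gets immediately \(A^t(D) \K = \delta \otimes I\), which yields existence.

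It remains to extract the structural information on \(\K\) from that of \(\K^{\circ}\). In the range \(2k < Q\), Theorem~\ref{thm:kernel_prelim} guarantees that \(\K^{\circ}\) is a kernel of type \(2k\); since \(A(D)\) is a left-invariant operator of order \(k\) that is homogeneous of degree \(k\) with respect to the dilations \(\delta_\lambda\), the distribution \(\K = A(D) \K^{\circ}\) is \(C^\infty\) on \(G \setminus \{0\}\) (by classical differentiation there) and homogeneous of degree \(k - Q\) (by distributional homogeneity), hence a kernel of type \(k\). When \(k < Q\) but \(2k \ge Q\), Theorem~\ref{thm:kernel_prelim} gives \(\K^{\circ} = \K^{\circ}_{\infty}(x) + P^{\circ}(x) \log \|x\|\) with \(P^{\circ}\) a homogeneous \(\End(V;V)\)--valued polynomial of non-isotropic degree \(2k - Q\) and \(\K^{\circ}_\infty\) smooth and homogeneous of degree \(2k - Q\) on \(G \setminus \{0\}\). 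To analyze \(A(D)[P^{\circ}\log\|x\|]\), I would expand using the (non-commutative) Leibniz rule for products of left-invariant vector fields: each resulting term has the form \((X_{\gamma'} P^{\circ})(X_{\gamma''}\log \|x\|)\) with \(|\gamma'| + |\gamma''| = k\). For \(|\gamma''| > 0\) the second factor is \(C^\infty\) on \(G \setminus \{0\}\) and homogeneous of degree \(-|\gamma''|\), so the product contributes a term which is \(C^\infty\) on \(G \setminus \{0\}\) and homogeneous of degree \(k - Q\). For \(|\gamma''| = 0\) one gets \((X_{\gamma} P^{\circ}) \log \|x\|\), but \(X_\gamma P^{\circ}\) is a polynomial of non-isotropic degree \(2k - Q - k = k - Q < 0\) and therefore vanishes identically. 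Consequently all log contributions cancel, and \(\K = A(D) \K^{\circ}\) is smooth and homogeneous of degree \(k - Q\) on \(G \setminus \{0\}\); distributional homogeneity then follows from the identity \(\K \circ \delta_\lambda = \lambda^{-k}[A(D)\K^{\circ}]\circ\delta_\lambda\) combined with the almost-homogeneity of \(\K^{\circ}\), noting that the extra \(\lambda^{2k-Q}\log\lambda \cdot P^{\circ}\) term is annihilated by \(A(D)\) for the same degree reason. Equivalently, the difference between \(\K\) and the canonical homogeneous extension of its restriction to \(G \setminus \{0\}\) is a distribution supported at \(\{0\}\) that is homogeneous of degree \(k - Q > -Q\), hence zero by the fact recalled in Section~2. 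So \(\K\) is a kernel of type \(k\).

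In the remaining case \(k \ge Q\) (so also \(2k \ge Q\)) the same Leibniz expansion shows that \(A(D)[P^{\circ}\log\|x\|]\) has non-log terms which are smooth on \(G \setminus \{0\}\), together with the log contribution \(P(x)\log\|x\|\) where \(P(x) \defeq \sum_{\gamma \in \Ind_k} A^{\gamma} X_\gamma P^{\circ}(x)\) is a homogeneous polynomial of non-isotropic degree \(k - Q\); combining with \(A(D) \K^{\circ}_\infty\) gives the decomposition \(\K = \K_\infty + P \log \|x\|\) required by Theorem~\ref{thm:kernel_prelim}. The main obstacle throughout is the bookkeeping around the log terms in the transition from \(2k\) to \(k\); the clean cancellation rests on the single observation that a left-invariant differential operator of order \(k\) kills any polynomial of non-isotropic degree strictly below \(k\), which (a) eliminates the spurious log terms when \(k < Q\) and (b) produces exactly the degree needed for the claimed polynomial \(P\) when \(k \ge Q\).
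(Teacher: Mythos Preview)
Your proof is correct and follows essentially the same approach as the paper: apply Theorem~\ref{thm:kernel_prelim} to the self-adjoint operator \(A^t(D)A(D)\) to obtain \(\K^{\circ}\), set \(\K \defeq A(D)\K^{\circ}\), and then analyze the three regimes \(2k < Q\), \(Q/2 \le k < Q\), and \(k \ge Q\) via the observation that an order-\(k\) operator annihilates polynomials of non-isotropic degree below \(k\). Your Leibniz-expansion bookkeeping and your remark on distributional homogeneity (the supported-at-zero, degree \(> -Q\) argument) are a bit more explicit than the paper's version, which simply notes that ``at least one of the \(k\) derivatives must hit the \(\log\|x\|\) factor'' and leaves the remaining cases to the reader.
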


\begin{proof}
Indeed, since \(A^t(D) A(D)\) is its own formal adjoint, Theorem~\ref{thm:kernel_prelim} implies the existence of a fundamental solution \(\K^{\circ} \in \D'(G;\End(V;V))\) so that 
\begin{equation*}
A^t(D) A(D) \K^{\circ} = \delta \otimes I \quad \text{on \(G\)}.
\end{equation*} 
It remains to observe that \(\K \defeq A(D) \K^{\circ} \in \D'(G;\End(V;E))\) satisfies the conclusions of Theorem~\ref{thm:kernel_prelim}, by considering separately the cases \(0 < k < Q/2\), \(Q/2 \leq k < Q\), and \(k > Q\). When \(0 < k < Q/2\), then \(K^{\circ}\) is a kernel of type $2k$, so \(A(D) K^{\circ}\) is a kernel of type \(2k-k = k\). When \(Q/2 \leq k < Q\), then \(\K^{\circ} = \K^{\circ}_{\text{homo}} +  P^{\circ}(x) \log \|x\|\) where \(K^{\circ}_{\text{homo}} \in \D'(G;\End(V;E))\) is homogeneous of degree \(2k-Q\), and \(P^{\circ}(x)\) is a homogeneous \(\End(V;V)\)--valued polynomial of degree \(2k-Q < k\); but \(\K = A(D) \K^{\circ}\) involves \(k\) homogeneous derivatives of \(\K^{\circ}_{\text{homo}} + P^{\circ}(x) \log \|x\|\), so at least one of the \(k\) derivatives must hit the \(\log \|x\|\) factor in the second term in order for it to make a non-zero contribution, transforming the second term into one that is homogeneous of degree \(k-Q\). This shows that \(\K\) is a kernel of type \(k\) in this case. Finally, when \(k \geq Q\), we still have \(\K^{\circ} = \K^{\circ}_{\text{homo}} +  P^{\circ}(x) \log \|x\|\), but now that \(k \geq Q\), the degree \(2k-Q\) of the polynomial \(P^{\circ}\) is at least \(k\). As a result, the best one can get out of the above argument is that \(K = \K_{\infty}(x) + P(x) \log \|x\|\) for some \(\End(V;E)\)--valued \(C^{\infty}\) function \(\K_{\infty}\) on \(G \setminus \{0\}\) that is homogeneous of degree \(k-Q\), and some homogeneous \(\End(V;E)\)--valued polynomial \(P\) of degree \(k-Q\).
\end{proof}

To proceed further, let \(A \in \End(V;E) \otimes T_k(\g_1)\) for some \(k \in \N\). Then the following conditions are equivalent:
\begin{enumerate}[(a)]
\item There exists an open set \(\Omega \subset G\) containing \(0\), and some \(C_{\Omega} > 0\), such that 
\begin{equation} \label{eq:A_subellip_assump}
\|D^k u\|_{L^2(G;V)} \leq C_{\Omega} \left[ \|A(D) u\|_{L^2(G;E)} + \|u\|_{L^2(G;V)} \right] 
\end{equation}
for all \(u \in \D(\Omega,V)\) (henceforth \(D^k\) is a shorthand for \(X_{\gamma}\) for any \(\gamma \in \Ind_k\)). 
\item There exists some \(C > 0\) such that 
\begin{equation} \label{eq:A_subellip_assump_homo}
\|D^k u\|_{L^2(G;V)} \leq C \|A(D) u\|_{L^2(G;E)}
\end{equation}
for all \(u \in \D(G,V)\).
\end{enumerate}
In fact, the first condition implies the second condition by scaling and homogeneity of \(A(D)\), and the second condition clearly implies the first. If \(A(D)\) satisfies either of these conditions, then \(A(D)\) is said to be \emph{maximally hypoelliptic}. It implies that 
\begin{equation} \label{eq:A_max_sub}
\|D^k u\|_{L^2(G;V)} \leq C_{\Omega} \left[ \langle A^t(D) A(D) u, u \rangle_{V,G} + \|u\|_{L^2(G;V)} \right] 
\end{equation}
for all \(u \in \D(\Omega,V)\), which is sometimes described as \(A^t(D) A(D)\) being \emph{maximally hypoelliptic of type 2} on \(\Omega\) (see Street \cite{Street}*{Section 2.4.1}), or \(A^t(D) A(D)\) being \emph{maximally subelliptic} on \(\Omega\) (see Baldi, Franchi, Tesi \cite{BaldiFranchiTesi2009}*{Theorem 4.1}). Again we may scale away the lower order term on the right-hand side of \eqref{eq:A_max_sub}, and obtain
\begin{equation} \label{eq:A_max_sub_homo}
\|D^k u\|_{L^2(G;V)} \leq C \langle A^t(D) A(D) u, u \rangle_{V,G}
\end{equation}
for all \(u \in \D(G;V)\). 
It is known that via microlocalization techniques, \eqref{eq:A_max_sub_homo} implies that \(A^t(D) A(D) \colon \D'(G;V) \to \D'(G;V)\) is hypoelliptic on \(G\) (see e.g.\ \cite{Street}*{Theorem 2.4.11}, \cite{BaldiFranchiTesi2009}*{Theorem 4.1}). To summarize, maximal hypoellipticity of \(A(D)\) implies the hypoellipticity of \(A^t(D) A(D)\). We will see that the converse also holds, and in fact we have the following theorem.

\begin{theorem} \label{thm:kernel_main}
Let \(k \in \N\),  \(V\), \(E\) be finite dimensional inner product spaces over \(\R\),  and \(A \in \End(V;E) \otimes T_k(\g_1)\). Then \(A(D) \colon \D'(G;V) \to \D'(G;E)\) is maximally hypoelliptic, if and only if \(A^t(D) A(D) \colon \D'(G;V) \to \D'(G;V)\) is hypoelliptic. Furthermore, under either of these conditions, the following conclusions hold:
\begin{enumerate}[(i)]
\item \label{cor:i} There exists \(\K \in \D'(G;\End(V;E))\) such that for every \(\psi \in \D(G;V)\) we have
\begin{equation} \label{conclusion1}
\psi = A^t(D) [\psi*\K].
\end{equation}
Furthermore, for any \(\ell \in \N_0\) and any \(\gamma \in \Ind_{\ell}\), there exists \(\K_{\gamma} \in \D'(G;\End(V;E))\) such that 
\begin{equation} \label{conclusion2}
(X_{\gamma})^t \psi = A^t(D) [\psi*\K_{\gamma}]
\end{equation}
for every \(\psi \in \D(G;V)\). If \(\ell > k-Q\), then \(\K_{\gamma}\) is a homogeneous \(\End(V;E)\)--valued distribution of (negative) degree \(k-\ell-Q\), and \(\K_{\gamma}\) agrees with a \(\End(V;E)\)--valued \(C^{\infty}\) function on \(G \setminus \{0\}\), so if further \(\ell < k\), then \(\K_{\gamma}\) is a kernel of type \(k-\ell\).
\item \label{cor:iii} There exists \(\tilde{\K} \in \D'(G;\End(E;V))\) such that for every \(\psi \in \D(G;V)\) we have
\begin{equation}\label{eq_quohohw4she8Aphaedie9aja}
\psi(x) - (A(D)\psi)*\tilde{\K}(x) = 
\begin{cases}
0 &\quad \text{if \(k < Q\)} \\
p(x) &\quad \text{if \(k \geq Q\)}
\end{cases}
\end{equation}
where in the second case \(p(x)\) is a polynomial of \(x\) whose non-isotropic degree is at most  \(k-Q\). 
Furthermore, for any \(\ell \in \N_0\) with \(\ell > k-Q\) and any \(\gamma \in \Ind_{\ell}\), there exists \(\tilde{\K}_{\gamma} \in \D'(G;\End(E;V))\), homogeneous of (negative) degree \(k-\ell-Q\) and agrees with an \(\End(E;V)\)--valued \(C^{\infty}\) function on \(G \setminus \{0\}\), such that
\begin{equation} \label{conclusion4}
X_{\gamma} \psi = (A(D)\psi)*\tilde{\K}_{\gamma} 
\end{equation}
for every \(\psi \in \D(G;V)\). So if further \(\ell < k\), then \(\tilde{\K}_{\gamma}\) is a kernel of type \(k-\ell\). Finally, for \(1 < p < \infty\) and \(\psi \in \D(G;V)\),
\begin{equation} \label{conclusion5}
\sum_{\gamma \in \Ind_k} \|X_{\gamma} \psi\|_{L^p(G;V)} \leq C \|A(D) \psi\|_{L^p(G;E)}.
\end{equation}
\end{enumerate} 
\end{theorem}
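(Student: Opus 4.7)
The theorem has three assertions: the equivalence of maximal hypoellipticity of \(A(D)\) with hypoellipticity of \(A^t(D) A(D)\), the left-inverse claim (\ref{cor:i}), and the right-inverse claim (\ref{cor:iii}). The forward direction ``maximal hypoellipticity of \(A(D)\) \(\Rightarrow\) hypoellipticity of \(A^t(D) A(D)\)'' is already outlined just before the statement: pair \eqref{eq:A_subellip_assump_homo} with itself by Cauchy--Schwarz to obtain \eqref{eq:A_max_sub_homo}, then appeal to the microlocal machinery cited there. The reverse direction will fall out of the \(p = 2\) case of \eqref{conclusion5}, which I prove as part of (\ref{cor:iii}).

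For (\ref{cor:i}): since \(A^t(D) A(D)\) is hypoelliptic, Corollary~\ref{cor:kernel0} supplies \(\K^\circ \in \D'(G; \End(V;V))\) with \(A^t(D) A(D) \K^\circ = \delta \otimes I_V\). Setting \(\K \defeq A(D) \K^\circ\) gives \(A^t(D) \K = \delta \otimes I_V\); left-invariance of \(A^t(D)\) together with \(A^t(D)(\psi * \K) = \psi * (A^t(D) \K)\) then yields \eqref{conclusion1}. For \eqref{conclusion2} I transfer \((X_\gamma)^t\) from \(\psi\) to \(\K\) by iterated application of \((X_i \psi) * \K = \psi * (X_i^R \K)\), defining \(\K_\gamma\) as the resulting right-invariant derivative of \(\K\). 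The homogeneity and off-origin smoothness under the assumption \(\ell > k - Q\) follow from the structure of \(\K\) given by Corollary~\ref{cor:kernel0}: if \(k < Q\), \(\K\) is homogeneous of degree \(k - Q\), so \(\ell\) further derivatives produce a distribution of degree \(k - \ell - Q\); if \(k \ge Q\), \(\K = \K_\infty(x) + P(x)\log\|x\|\) with \(\deg P = k - Q\), and \(\ell > \deg P\) annihilates every surviving log contribution---those terms in which all \(\ell\) derivatives land on \(P\) vanish since \(\deg P < \ell\), while terms in which at least one derivative lands on \(\log\|x\|\) are homogeneous and smooth off the origin.

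For (\ref{cor:iii}), I construct \(\tilde{\K}\) in the mirror image of (\ref{cor:i}). Using \((X_\gamma \psi) * \phi = \psi * (X_{\gamma_k}^R \cdots X_{\gamma_1}^R \phi)\) and integration by parts on \(G\), the identity \((A(D)\psi) * \tilde{\K} = \psi + p(x)\) modulo a polynomial reduces, for all \(\psi \in \D(G;V)\), to a fundamental-solution problem for the right-invariant companion of \(A^t(D) A(D)\), which is solvable by repeating the argument of Corollary~\ref{cor:kernel0} with right-invariant vector fields in place of left ones. A change of variables then produces the representation \(\int (A(D) \psi)(y)\, \tilde{\K}(y^{-1} x)\, dy = \psi(x) + p(x)\). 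The polynomial defect \(p(x)\) appears only when \(k \ge Q\), and arises from the \(P^\circ(x) \log \|x\|\) term in the right-invariant analogue of \(\K^\circ\): after the \(k\) differentiations implicit in the construction, the log factor contributes a residual polynomial of non-isotropic degree at most \(k - Q\), while any remaining distribution supported at \(\{0\}\) and homogeneous of degree exceeding \(-Q\) must vanish. When \(k < Q\) the relevant kernel is purely homogeneous and no polynomial defect appears. The \(\tilde{\K}_\gamma\) are obtained by further differentiation of \(\tilde{\K}\), and the homogeneity/smoothness assertion for \(\ell > k - Q\) follows from the same degree count as in (\ref{cor:i}). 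Specialising to \(\ell = k\), each \(\tilde{\K}_\gamma\) is homogeneous of degree \(-Q\) and smooth off the origin---a Calder\'on--Zygmund kernel on \(G\)---so convolution against it is bounded \(L^p \to L^p\) for every \(1 < p < \infty\) by the theorem of Stein cited in the preamble, yielding \eqref{conclusion5}; the \(p = 2\) case supplies the missing direction of the equivalence.

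The principal obstacle is the bookkeeping in (\ref{cor:iii}): one must simultaneously juggle the reflection \(x \mapsto x^{-1}\), the transposition of matrix coefficients in \(\End(V;E) \leftrightarrow \End(E;V)\), and the order-reversing passage between left- and right-invariant vector fields, all while cleanly isolating the polynomial defect \(p(x)\) from the genuine singular part of \(\tilde{\K}\) when \(k \ge Q\).
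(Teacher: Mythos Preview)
Your treatment of the equivalence and of part~(\ref{cor:i}) matches the paper's. The divergence is in part~(\ref{cor:iii}). The paper builds \(\tilde{\K}\) from the \emph{same} left-invariant fundamental solution \(\K^\circ\), setting \(\tilde{\K} = (-1)^k \sum_\gamma X^R_\gamma \K^\circ (A^\gamma)^t\), so that \((A(D)\psi)*\tilde{\K} = (A^t(D)A(D)\psi)*\K^\circ\); the defect \(w = \psi - (A(D)\psi)*\tilde{\K}\) then satisfies \(A^t(D)A(D)w = 0\), and the paper invokes the Liouville-type theorem of Baldi--Franchi--Tesi to conclude that \(w\) is a polynomial, whose degree is pinned down by showing \(X_\gamma w \to 0\) at infinity whenever \(\ell > k - Q\). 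You instead propose a fundamental solution for the right-invariant companion of \(A^t(D)A(D)\). That route is legitimate and in fact cleaner: writing \(B = A^t(D)A(D)\), one has \((B(D)\psi)*\phi = \psi*(B^{(R)}(D)\phi)\) where \(B^{(R)}\) is the right-invariant operator obtained by reversing each monomial, and \(B^{(R)}\) is hypoelliptic because \(B = B^t\) gives \(B^{(R)}\phi = \widetilde{B\tilde\phi}\). A right-invariant \(\tilde{\K}^\circ\) with \(B^{(R)}(D)\tilde{\K}^\circ = \delta \otimes I\) then yields \((A(D)\psi)*\tilde{\K} = \psi\) \emph{exactly}, with no polynomial defect at all, and the Liouville step is bypassed.

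But your account of where \(p(x)\) comes from is wrong, and this is the genuine gap. You write that the log factor in \(P^\circ(x)\log\|x\|\), after \(k\) differentiations, ``contributes a residual polynomial of non-isotropic degree at most \(k-Q\)''. It does not: differentiating \((\text{polynomial})\cdot\log\|x\|\) produces terms of the form \((\text{lower-degree polynomial})\cdot\log\|x\|\) plus terms homogeneous and smooth off the origin, never a bare polynomial. In the paper's construction the polynomial defect is a genuine global solution of \(A^t(D)A(D)w = 0\), identified only through Liouville and a decay argument; in your construction, carried out correctly, the defect is simply zero. So either adopt the paper's Liouville argument, or commit fully to the right-invariant route and drop the spurious explanation of \(p\). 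A second, smaller gap: for \eqref{conclusion5} you assert that \(\tilde{\K}_\gamma\) with \(\gamma \in \Ind_k\) is ``homogeneous of degree \(-Q\) and smooth off the origin---a Calder\'on--Zygmund kernel''. Homogeneity of degree \(-Q\) alone does not give \(L^p\) boundedness; a cancellation condition is needed. The paper secures it by writing \(\tilde{\K}_\gamma = X_{\gamma_1}\tilde{\K}_{(\gamma_2,\dots,\gamma_k)}\) as a left-invariant derivative of a kernel of type \(1\), which forces the required cancellation.
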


This improves upon our Theorem~\ref{thm:kernel_prelim} earlier, because if \(A(D)\) and \(A^t(D)\) are both hypoelliptic, then so is \(A^t(D) A(D)\) and Theorem~\ref{thm:kernel_main} applies. It also strengthens Theorem~3.1 of Baldi, Franchi and Tesi \cite{BaldiFranchiTesi2009}, because if \(V = E\) and \(A(D) = A^t(D)\) is hypoelliptic, then again \(A^t(D) A(D)\) is hypoelliptic, and the conclusions of Theorem~\ref{thm:kernel_main} imply the conclusions (i)--(iv) of \cite{BaldiFranchiTesi2009}*{Theorem 3.1}. Again we note that we impose no upper bound assumption on the order \(k\) of the operator \(A(D)\) in our Theorem~\ref{thm:kernel_main}.

Given what we have done so far, the main remaining difficulty in proving Theorem~\ref{thm:kernel_main} is in the proof of conclusion \eqref{cor:iii}. To that end, a useful tool is a clever Liouville--type theorem from Baldi, Franchi and Tesi \cite{BaldiFranchiTesi2009}, as we will see below.

\begin{proof}[Proof of Theorem~\ref{thm:kernel_main}]
Let \(A \in \End(V;E) \otimes T_k(\g_1)\) for some \(k \in \N\). 

We have already seen that if \(A(D) \colon \D'(G;V) \to \D'(G;E)\) is maximally hypoelliptic, then \(A^t(D) A(D) \colon \D'(G;V) \to \D'(G;V)\) is hypoelliptic. 

So now suppose \(A^t(D) A(D) \colon \D'(G;V) \to \D'(G;V)\) is hypoelliptic, so that Corollary~\ref{cor:kernel0} applies. We will prove the conclusions \eqref{cor:i} and \eqref{cor:iii}. The last conclusion of \eqref{cor:iii}, or more specifically \eqref{conclusion5} applied with \(p=2\), shows that \(A(D) \colon \D'(G;V) \to \D'(G;E)\) is maximally hypoelliptic, and this will complete our proof of the present theorem.

In order to prove \eqref{cor:i}, from Corollary~\ref{cor:kernel0}, we obtain some \(\K \in \D'(G;\End(V;E))\) so that 
\begin{equation*}
A^t(D) \K = \delta \otimes I.
\end{equation*}
As observed before, this shows \(\psi = A^t(D)[\psi*\K]\) for all \(\psi \in \D(G;V)\), which is \eqref{conclusion1}. 

Next, let \(\ell \in \N_0\), \(\gamma \in \Ind_{\ell}\). We apply the above identity to \((X_{\gamma})^t \psi\) in place of \(\psi\). Then
\begin{equation*}
(X_{\gamma})^t \psi = A^t(D) [[(X_{\gamma})^t \psi]* \K] = A^t(D) [\psi * \K_{\gamma}]
\end{equation*}
where 
\begin{equation*}
\K_{\gamma} \defeq (-1)^{\ell} X^R_{\gamma} \K \in \D'(G;\End(V;E)).
\end{equation*}
This proves \eqref{conclusion2}. 
Now by Corollary~\ref{cor:kernel0}, we may write 
\begin{equation*}
\K = \K_{\text{homo}} + P(x) \log \|x\|,
\end{equation*} 
where \(\K_{\text{homo}} \in \D'(G;\End(V;E))\) is homogeneous of degree \(k-Q\), and \(P(x)\) is a homogeneous \(\End(V;E)\)--valued polynomial of degree \(k-Q\) if \(k \geq Q\), and zero otherwise.  
If \(\ell > k-Q\), then the formula of \(\K_{\gamma}\) shows that \(\K_{\gamma}\) is a homogeneous \(\End(V;E)\)--valued distribution of degree \(k-Q-\ell\). This is because at least one of the \(\ell\) derivatives in \(X^R_{\gamma}\) must hit \(\log \|x\|\) for it to give a non-zero contribution, which transforms this factor into something homogeneous. 
Finally, \(\K_{\gamma}\) agrees with an \(\End(V;E)\)--valued \(C^{\infty}\) function on \(G \setminus \{0\}\), because \(\K\) is \(C^{\infty}\) away from \(0\) by Corollary~\ref{cor:kernel0}. Thus if further \(\ell < k\), then \(\K_{\gamma}\) is a kernel of type \(k-\ell\). 
The last assertion in \eqref{cor:i} is now established.

In order to prove \eqref{cor:iii}, let \(\K^{\circ} \in \D'(G;\End(V;V))\) be the distribution in the proof of Corollary~\ref{cor:kernel0} so that 
\begin{equation} \label{eq:K0}
A^t(D) A(D) \K^{\circ} = \delta \otimes I \quad \text{on \(G\)}.
\end{equation}
This \(\K^{\circ}\) was constructed using Theorem~\ref{thm:kernel_prelim}, with \(A^t(D)A(D)\) in place of \(A(D)\) and \(2k\) in place of \(k\). Hence \(\K^{\circ}\) is \(C^{\infty}\) on \(G\setminus \{0\}\), and satisfies a pointwise bound
\begin{equation} \label{eq:K0bound}
\K^{\circ}(x) = 
\begin{cases}
O(1) &\quad \text{if \(2k < Q\)},\\
O\bigl(\|x\|^{2k-Q}(1+\log\|x\|)\bigr) &\quad \text{if \(2k \geq Q\)}.
\end{cases}
\end{equation} 
as \(\lVert x \rVert \to \infty\). Furthermore, we may write 
\begin{equation} \label{eq:K0expand}
\K^{\circ} = \K^{\circ}_{\text{homo}} + P^{\circ}(x) \log \|x\|,
\end{equation} 
where \(\K^{\circ}_{\text{homo}} \in \D'(G;\End(V;V))\) is homogeneous of degree \(2k-Q\), and \(P^{\circ}(x)\) is a homogeneous \(\End(V;V)\)--valued polynomial of degree \(2k-Q\) if \(2k \geq Q\), and zero otherwise.  

Now let \(A(D) = \sum_{\gamma \in \Ind_k} A^{\gamma} X_{\gamma}\) and let \(\tilde{\K} \in \D'(G;\End(E;V))\) be defined by
\begin{equation*}
\tilde{\K} \defeq (-1)^k \sum_{\gamma \in \Ind_k} X^R_{\gamma} \K^{\circ} (A^{\gamma})^t.
\end{equation*}
Suppose \(\psi \in D(G;V)\).
Then
\begin{equation*}
(A(D)\psi)*\tilde{\K} = (A^t(D)A(D)\psi)*\K^{\circ}.
\end{equation*}
As a result, if 
\begin{equation*}
w \defeq \psi - (A(D)\psi)*\tilde{\K} = \psi - (A^t(D) A(D)\psi)*\K^{\circ},
\end{equation*}
then from \eqref{eq:K0} we have
\begin{equation*}
A^t(D) A(D) w = 0.
\end{equation*}
In particular, hypoellipticity of \(A^t(D) A(D)\) implies that \(w \in C^{\infty}(G;V)\). The pointwise estimate for \(\K^{\circ}\) in \eqref{eq:K0bound} shows that, as \(\lVert x \rVert \to \infty\), 
\begin{equation*}
w(x) = 
\begin{cases}
O(1) &\quad \text{if \(2k < Q\)},\\
O\bigl(\|x\|^{2k-Q}(1+\log\|x\|)\bigr) &\quad \text{if \(2k \geq Q\)}.
\end{cases}
\end{equation*} 
In particular, \(w\) is an \(V\)--valued tempered distribution on \(G\). Using the Liouville--type theorem in Baldi, Franchi and Tesi \cite{BaldiFranchiTesi2009}*{Proposition 3.2}, we then see that \(w\) is a \(V\)--valued polynomial on \(G\). 

To proceed further, if \(\ell \in \N_0\), \(\ell > k - Q\) and \(\gamma \in \Ind_{\ell}\), we claim that \(X_{\gamma} w = 0\) on \(G\). Indeed, then
\begin{equation*}
X_{\gamma} w = X_{\gamma} \psi - (A(D)\psi)*\tilde{\K}_{\gamma}
\end{equation*}
where 
\begin{equation*}
\tilde{\K}_{\gamma} \defeq X_{\gamma} \tilde{\K} = (-1)^k \sum_{\gamma' \in \Ind_k} X_{\gamma} X^R_{\gamma'} \K^{\circ} (A^{\gamma'})^t \in \D'(G;\End(E;V)).
\end{equation*}
By \eqref{eq:K0expand}, we then have
\begin{equation} \label{eq:tildeKgammadef}
\tilde{\K}_{\gamma} = (-1)^k \sum_{\gamma' \in \Ind_k} X_{\gamma} X^R_{\gamma'}  \Big( \K^{\circ}_{\text{homo}} + P^{\circ}(x) \log \|x\| \Big) (A^{\gamma'})^t.
\end{equation}
However, since \(\ell > k - Q\), this shows \(\tilde{\K}_{\gamma}\) is homogeneous of degree 
\begin{equation*}
2k - Q - (\ell + k) = k - \ell - Q < 0.
\end{equation*} 
This is because when \(\ell > k - Q\), we have \(\ell + k > 2k - Q\), so at least one of the \(\ell + k\) derivatives in \(X_{\gamma} X^R_{\gamma'}\) must hit the factor \(\log \|x\|\) in \eqref{eq:tildeKgammadef} for it to give a non-zero contribution, making the term homogeneous. Furthermore, \(\tilde{\K}_{\gamma}\) agrees with an \(\End(E;V)\)--valued \(C^{\infty}\) function on \(G \setminus \{0\}\). Since \(\psi \in \D(G;V)\), we see that \((A(D)\psi)*\tilde{\K}_{\gamma}(x) \to 0\) as \(x \to \infty\). Thus \(X_{\gamma} w(x) \to 0\) as \(\|x\| \to \infty\) as well, and from the fact that \(w\) is a polynomial, our claim \(X_{\gamma} w = 0\) follows.

If now \(k < Q\), we apply the above claim with \(\ell = 0\), and see that \(w = 0\), i.e.\ 
\begin{equation*} 
  \psi - (A(D)\psi)*\tilde{\K} = 0.
\end{equation*} 
If \(k \geq Q\), we apply the above claim with \(\ell = k-Q+1\), and see that \(w = \psi - (A(D)\psi)*\tilde{\K}\) is a \(V\)--valued polynomial whose non-isotropic degree is \(\leq k-Q\). This establishes 
\eqref{eq_quohohw4she8Aphaedie9aja}.

If \(\ell > k - Q\) and \(\gamma \in \Ind_{\ell}\), applying \(X_{\gamma}\) to both sides of \eqref{eq_quohohw4she8Aphaedie9aja}, we see that \(X_{\gamma} \psi = (A(D)\psi)*\tilde{\K}_{\gamma}\), where as in the above, \(\tilde{\K}_{\gamma} \defeq X_{\gamma} \tilde{\K}\). Indeed all desired properties of \(\tilde{\K}_{\gamma}\) have already been established above. This establishes \eqref{conclusion4}.

Finally, if \(\gamma = (\gamma_1, \dots, \gamma_k) \in \Ind_k\), then \(\tilde{\K}_{\gamma} = X_{\gamma_1} \tilde{\K}_{(\gamma_2,\dots,\gamma_k)}\) is a left-invariant derivative of a kernel of type \(1\). Together with \eqref{conclusion4}, this implies \eqref{conclusion5}, as we have seen towards the end of the last section.
\end{proof}

We finish this section with some examples of maximally hypoelliptic operators on \(G\).

\begin{example}[Subelliptic gradient] \label{eg1}
Let \(V = \R\), \(E = \g_1^*\) and \(e^1, \dotsc, e^m\) be the basis of \(E\) dual to \(X_1, \dotsc, X_m\). 
Let \(A \in \End(V;E) \otimes T_1(\g_1)\) be such that 
\begin{equation}
A(D) u = \sum_{1 \leq j \leq m} X_j u \, e^j
\end{equation} 
if \(u \in \D'(G;V)\). 
Then \(A(D) \colon \D'(G;\R) \to \D'(G;\g_1^*)\) is obviously maximally hypoelliptic.
\end{example}

\begin{example} \label{eg2}
More generally, let \(k \in \N\), \(V = \R\), \(E = \g_1^*\) and \(A \in \End(V;E) \otimes T_k(\g_1)\) be  such that 
\begin{equation} \label{high_order_gradient}
  A(D) u = \sum_{1 \leq j \leq m} X_j^k u \, e^j
\end{equation} 
if \(u \in \D'(G;V)\). 
(This generalizes the previous example, which is the case \(k = 1\).) We will show that \(A(D) \colon \D'(G;\R) \to \D'(G;\g_1^*)\) is maximally hypoelliptic.

To see this, note that \(A^t(D) A(D) \colon \D'(G;\R) \to \D'(G;\R)\) is given by 
\begin{equation*}
A^t(D) A(D) u = (X_1^{2k} + \dots + X_m^{2k}) u, \quad u \in \D'(G;\R)
\end{equation*}
which is hypoelliptic by a theorem of Helffer and Nourrigat \cite{MR537467} (see also Melin \cite{MR739894}). Indeed, as in Folland and Stein \cite{MR657581}*{(4.20)}, suppose \(\pi\) is an irreducible (complex) representation of \(G\), which determines a representation \(d\pi\) of the Lie algebra \(\g\) as skew-Hermitian operators on \(\mathcal{S}_{\pi}\). Then \(d\pi\) extends to a representation of \(T_k (\g)\) (still denoted \(d\pi\)) as operators acting on \(\mathcal{S}_{\pi}\), and if \(d\pi(X_1^{2k} + \dots + X_m^{2k})v = 0\) for some \(v \in \mathcal{S}_{\pi}\), then 
\begin{equation*}
0 = (d\pi(X_1^{2k} + \dots + X_m^{2k})v, v ) = \sum_{i=1}^m (d\pi(X_i)^k v, d\pi(X_i)^k v)
\end{equation*}
so \(d\pi(X_i)^k v = 0\) for \(1 \leq i \leq m\). It follows that whenever \(\ell \in \N_0\) and \(2^{\ell} \geq k\), then \(d\pi(X_i)^{2^{\ell}}v = 0\) for \(1 \leq i \leq m\). A similar argument as above then shows that \(d\pi(X_i) v = 0\) for \(1 \leq i \leq m\). Since \(X_1,\dots,X_m\) generates \(\g\), this shows that \(d\pi(X)v = 0\) for all \(X \in \g\), so either \(\pi\) is the trivial representation, or \(v = 0\). This verifies what is called the Rockland condition, and the aforementioned theorem of Helffer and Nourrigat implies that \(X_1^{2k} + \dots + X_m^{2k}\) is hypoelliptic. It follows now from our Theorem~\ref{thm:kernel_main} that \(A(D)\) given by \eqref{high_order_gradient} is also maximally hypoelliptic.
\end{example}

\begin{example}[Korn--Sobolev] \label{eg3}
Let \(V = \g_1\), \(E = S_2(\g_1)\) the subspace of all symmetric tensors in \(T_2(\g_1)\) and \(e^1, \dotsc, e^m\) be a basis of \(V\). 
Write \(e^{ij}\) as a shorthand for \(\operatorname{Sym}(e^i \otimes e^j)\), so that \(\{e^{ij}\}_{1 \leq i \leq j \leq m}\) is a basis for \(E\). 
Let \(A \in \End(V;E) \otimes T_1(\g_1)\) be the Korn operator
\begin{equation*}
A(D) u = \sum_{1 \leq i \leq j \leq m} (X_i u_j + X_j u_i) e^{ij}
\end{equation*}
if \(u = \sum_{1 \leq j \leq m} u_j e^j \in \D'(G;V)\). We will show that \(A(D) \colon \D'(G;\R) \to \D'(G;S_2(\g_1))\) is maximally hypoelliptic. 

Fix \(1 \leq i < j \leq m\), and let \(u \in \D(G;V)\). We want to estimate \(\|X_i u_j\|_{L^2(G)}\) in terms of \(\|A(D) u\|_{L^2(G;E)}\). 
But pick \(k = 2r\) where \(r\) is the step of the group \(G\) (so that \(\g_r \ne \{0\}\) in \eqref{eq:Liealg_grading}); we assume \(r \ge 2\) for otherwise we are in the Euclidean situation and the ellipticity of \(A(D)\) is well-known. We will express \((X_{\ell}^k X_i u_j)_{1 \leq {\ell} \leq m}\) in terms of order \(k\) derivatives of \(A(D) u\). A tool that comes in handy is the following observation: for \(1 \leq {\ell}, {\ell'} \leq m\) and \(s \in \N\),
\begin{equation*}
X_{\ell}^{\otimes s} \otimes X_{\ell'} \in \g_{s+1} + \g_s \otimes X_{\ell} + \dots + \g_1 \otimes X_{\ell}^{\otimes s}
\end{equation*}
which can be proved easily using induction on \(s\); indeed,
\begin{equation*}
\begin{split}
X_{\ell}^s X_{\ell'} = & \binom{s}{0} (\ad X_{\ell})^s (X_{\ell'}) + \binom{s}{1} (\ad X_{\ell})^{s-1} (X_{\ell'}) X_{\ell} + \dots \\
& \qquad + \binom{s}{s-1} (\ad X_{\ell})(X_{\ell'}) X_{\ell}^{s-1} + \binom{s}{s} X_{\ell'} X_{\ell}^s
\end{split}
\end{equation*}
which is a general identity that holds in all associative algebras.
Since \(\g_{r+1} = \{0\}\), this shows
\begin{equation} \label{eq:commutator}
X_{\ell}^r X_{\ell'} = C_{{\ell},\ell'}(D) X_{\ell} 
\quad \text{where} \quad C_{{\ell},\ell'} \in \g_r + \g_{r-1} \otimes X_{\ell} + \dots + \g_1 \otimes X_{\ell}^{\otimes (r-1)}.
\end{equation}

Recall we fixed \(1 \leq i < j \leq m\), and wrote \(k = 2r\). Let now \(1 \leq {\ell} \leq m\). We will express \(X_{\ell}^k (X_i u_j)\) in terms of order \(k\) derivatives of \(A(D)u\).
\begin{description}
\item[Case 1] \({\ell} = j\) 

Then 
\begin{equation*}
X_{\ell}^r (X_i u_j) = X_j^r (X_i u_j) = C_{j,i}(D) (X_j u_j)
\end{equation*}
by \eqref{eq:commutator}, so 
\begin{equation} \label{case1}
X_{\ell}^k (X_i u_j) = X_{\ell}^r C_{j,i}(D) (X_j u_j).
\end{equation} 

\item[Case 2] \({\ell} = i\) 

Then
\begin{equation*}
\begin{split}
X_{\ell}^r (X_i u_j) = X_i^r (X_i u_j) &= X_i^r (X_i u_j + X_j u_i) - X_i^r X_j u_i \\
&= X_i^r (X_i u_j + X_j u_i) - C_{i,j}(D) (X_i u_i) 
\end{split}
\end{equation*}
by \eqref{eq:commutator}, so
\begin{equation} \label{case2}
X_{\ell}^k (X_i u_j) = X_i^{2r} (X_i u_j + X_j u_i) - X_i^r C_{i,j}(D) (X_i u_i).
\end{equation} 

\item[Case 3] \(1 \le {\ell} \le m\), \({\ell} \ne i\) nor \(j\) 

Then by \eqref{eq:commutator},
\begin{equation*}
X_{\ell}^r (X_i u_j) = C_{{\ell},i}(D) (X_{\ell} u_j) =  C_{{\ell},i}(D) (X_{\ell} u_j + X_j u_{\ell}) -  C_{{\ell},i}(D) X_j u_{\ell}.
\end{equation*}
But 
\begin{equation*}
X_{\ell}^r C_{{\ell},i}(D) = \tilde{C}_{{\ell},i}(D) X_{\ell}^r \quad \text{for some} \quad \tilde{C}_{{\ell},i} \in  \g_r + \g_{r-1} \otimes X_{\ell} + \dots + \g_1 \otimes X_{\ell}^{\otimes (r-1)}.
\end{equation*}
As a result,
\begin{equation*}
X_{\ell}^k (X_i u_j) 
= X_{\ell}^r C_{{\ell},i}(D) (X_{\ell} u_j + X_j u_{\ell}) - \tilde{C}_{{\ell},i}(D) X_{\ell}^r X_j u_{\ell}
\end{equation*}
which in light of \eqref{eq:commutator} gives
\begin{equation} \label{case3}
X_{\ell}^k (X_i u_j) = X_{\ell}^r C_{{\ell},i}(D) (X_{\ell} u_j + X_j u_{\ell}) - \tilde{C}_{{\ell},i}(D) C_{{\ell},j}(D) (X_{\ell} u_{\ell}).
\end{equation} 
\end{description}

Now since \(\dim \g_1 \ge 2\) and \(\dim \g_j \ge 1\) for \(2 \leq j \leq r\), we have
\begin{equation*}
Q = \sum_{j=1}^r j \cdot \dim \g_j \geq 2 + \sum_{j=2}^r j = \frac{r^2+r+2}{2} \geq 2r,
\end{equation*}
with a strict inequality unless both \(r = 2\) and \(\dim \g_1 = 2\). If \(Q > 2r = k\), then using the ellipticity of \(u \mapsto (X_1^k u, \dots, X_m^k u)\) as in Example~\ref{eg2}, and using \eqref{eq_quohohw4she8Aphaedie9aja} in Theorem~\ref{thm:kernel_main}, we conclude the existence of \(\tilde{\K}_1, \dots, \tilde{\K}_m \in \D'(G;\R)\), each a kernel of type \(k\), such that 
\[
X_i u_j = \sum_{{\ell}=1}^m (X_{\ell}^k X_i u_j)*\tilde{\K}_{\ell}.
\]
But using \eqref{case1}, \eqref{case2} and \eqref{case3}, we have expressed \(X_{\ell}^k X_i u_j\) as a derivative of order \(k\) of components of \(A(D)u\). As a result, we may express \(X_i u_j\) as the convolution of \(A(D)u\) with the order \(k\) derivative of a kernel of type \(k\). As observed at the end of the last section, this shows
\begin{equation} \label{eq:Korn_Sob_ellipticity}
\sum_{i=1}^{\dim V} \|X_i u\|_{L^2(G;V)} \leq \|A(D)u\|_{L^2(G;E)}.
\end{equation}
On the other hand, if indeed \(r = 2\) and \(\dim \g_1 = 2\), then \(Q = 4\) and we didn't have Case 3 above. So we may express \( (X_1^2 (X_i u_j), X_2^2 (X_i u_j)) \) in terms of second order derivatives of \(A(D)u\), which then allows us to express \(X_i u_j\) as the convolution of \(A(D)u\) with the second order derivative of a kernel of type \(2\). Hence \eqref{eq:Korn_Sob_ellipticity} also holds in this case, and this completes our proof of the maximal hypoellipticity of \(A(D)\).
\end{example}

\section{\texorpdfstring{\(L^1\)}{L¹} estimates}

In this section we prove \(L^1\) estimates for certain \(A(D)\) satisfying the conditions of Theorem~\ref{thm:kernel_main}.

To set up some notations, let \(S_{\ell}(\g_1)\) be the subspace of all symmetric tensors in \(T_{\ell}(\g_1)\). There is a symmetrization map \(\operatorname{Sym} \colon T_{\ell}(\g_1) \to S_{\ell}(\g_1)\), which is a linear surjection given by
\begin{equation*}
X^{\otimes}_{\lambda} \mapsto \frac{1}{\ell!} \sum_{\sigma \in S_{\ell}} X^{\otimes}_{\sigma(\lambda)}, \quad \lambda \in \Ind_{\ell},
\end{equation*}
where \(S_{\ell}\) is the symmetric group on \(\ell\) elements, and \(\sigma(\lambda) \defeq (\lambda_{\sigma(1)}, \dotsc, \lambda_{\sigma(\ell)})\) if \(\lambda = (\lambda_1, \dotsc, \lambda_{\ell}) \in \Ind_{\ell}\) and \(\sigma \in S_{\ell}\). Let \(I_{\ell} \defeq \{\beta = (\beta_1, \dotsc, \beta_m) \in \N_0^m \colon |\beta| = \ell\}\) for the set of all multiindices of length \(\ell\). 
For \(\lambda \in \Ind_{\ell}\) and \(\beta \in I_{\ell}\), we will write \(\beta = \operatorname{Sym}(\lambda)\) if for \(1 \leq j \leq m\), \(\beta_j\) is the number of indices among \(\lambda_1, \dotsc, \lambda_{\ell}\) that is equal to~\(j\). 
If \(\beta \in I_{\ell}\), it will also be convenient to write
\begin{equation*}
\tilde{X}^{\otimes \beta} = \operatorname{Sym} (X^{\otimes}_{\lambda})
\end{equation*}
where \(\lambda\) is any element in \(\Ind_{\ell}\) with \(\operatorname{Sym}(\lambda) = \beta\). Note that \(S_{\ell}(\g_1)\) is isomorphic to the vector space of all  commutative homogeneous polynomials of \(\xi = (\xi_1, \dotsc, \xi_m) \in \R^m\) of degree \(\ell\) with real coefficients, via the map
\begin{equation*}
\tilde{X}^{\otimes \beta} \mapsto \xi^{\beta}, \quad \beta \in I_{\ell}
\end{equation*}
where \(\xi^{\beta} \defeq \xi_1^{\beta_1} \dots \xi_m^{\beta_m}\) for \(\beta \in I_{\ell}\). 

Let now \(F\) be a finite dimensional inner product space over \(\R\). The symmetrizaiton map \(\operatorname{Sym} \colon T_{\ell}(\g_1) \to S_{\ell}(\g_1)\) extends to a linear map \begin{equation*}\operatorname{Sym} \colon \End(E;F) \otimes T_{\ell}(\g_1) \to \End(E;F) \otimes S_{\ell}(\g_1);\end{equation*} more explicitly, for \(L \in \End(E;F) \otimes T_{\ell}(\g_1)\), if \(L = \sum_{\lambda \in \Ind_{\ell}} B^{\lambda} X^{\otimes}_{\lambda}\) where each \(B^{\lambda} \in \End(E;F)\), we have
\begin{equation*}
\operatorname{Sym}(L) =  \sum_{\lambda \in \Ind_{\ell}} B^{\lambda} \operatorname{Sym}(X^{\otimes}_{\lambda}) = \sum_{\beta \in I_{\ell}} \tilde{B}_{\beta} \tilde{X}^{\otimes \beta}
\end{equation*}
where
\begin{equation*}
\tilde{B}_{\beta} \defeq \sum_{\lambda \in \Ind_{\ell} \colon \operatorname{Sym}(\lambda) = \beta} B^{\lambda} \quad \text{for each \(\beta \in I_{\ell}\)}.
\end{equation*}
As before, we may associate to each such \(\operatorname{Sym}(L)\) a homogeneous left-invariant linear partial differential operator \(\operatorname{Sym}(L)(D)\) of order \(\ell\) on \(G\). Moreover, by identifying \(S_{\ell}(\g_1)\) with the space of all commutative homogeneous degree \(\ell\) polynomials on \(\R^m\), we may identify \(\operatorname{Sym}(L) \in \End(E;F) \otimes S_{\ell}(\g_1)\) with
\begin{equation} \label{eq:SymL_symbolexp}
\operatorname{Sym}(L)(\xi) = \sum_{\beta \in I_{\ell}} \tilde{B}_{\beta} \xi^{\beta}.
\end{equation}
This polynomial in \(\xi\) is usually called the \emph{symbol} of \(\operatorname{Sym}(L)(D)\); we say \(\operatorname{Sym}(L)(D)\) is cocanceling, if and only if
\begin{equation*}
\bigcap_{\xi \in \R^m} \text{ker} \, \operatorname{Sym}(L)(\xi) = \{0\}.
\end{equation*}
This is the same as saying that 
\begin{equation*}
\bigcap_{\beta \in I_{\ell}} \text{ker} \, \tilde{B}_{\beta} = \{0\},
\end{equation*}
if \(\operatorname{Sym}(L)(\xi)\) is as in (\ref{eq:SymL_symbolexp}); this follows, for instance, from a variant of the proof of Lemma~\ref{lem:cocan_open} below.

One of our main results is as follows.

\begin{theorem} \label{thm:can_est}
Let \(k \in \N\), let \(V, E\) be finite dimensional inner product spaces over \(\R\), and let \(A \in \End(V;E) \otimes T_k(\g_1)\). Suppose \(A(D)\) is maximally hypoelliptic.
Assume there exist \(L \in \End(E;F) \otimes T_{\ell}(\g_1)\) for some finite dimensional real inner product space \(F\) and some \(\ell \in \N\), such that 
\begin{equation*}L(D) \circ A(D) = 0,\end{equation*}
and that \(\operatorname{Sym}(L)(D)\) is cocanceling.
Then for any \(\gamma \in \Ind_{k-1}\), we have
\begin{equation*}
\|X_{\gamma} u\|_{L^{\frac{Q}{Q-1}}(G;V)} \leq C \| A(D) u \|_{L^1(G;E)}
\end{equation*}
for all \(u \in C^{\infty}_c(G;V)\).
\end{theorem}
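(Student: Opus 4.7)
The plan is to combine the representation formula from Theorem~\ref{thm:kernel_main}(\ref{cor:iii}) with a Bourgain--Brezis-type duality estimate on \(G\) (namely Lemma~\ref{lem:CVS}, the Chanillo--Van Schaftingen inequality announced earlier in the introduction). Concretely, one should aim to reduce the inequality to a \(W^{1,Q}\)-versus-\(L^1\) duality statement for vector fields annihilated by \(L(D)\). Under the standing assumption \(Q \geq 2\) (the case \(G=\R\) being handled separately by the fundamental theorem of calculus), every \(\gamma \in \Ind_{k-1}\) has length \(\ell = k-1 > k-Q\), so Theorem~\ref{thm:kernel_main}(\ref{cor:iii}) produces a kernel \(\tilde{\K}_{\gamma}\) of type \(1\) with
\begin{equation*}
X_{\gamma} u = (A(D)u) * \tilde{\K}_{\gamma}.
\end{equation*}
Setting \(f \defeq A(D) u \in \D(G;E)\), the compatibility hypothesis gives \(L(D) f = L(D) A(D) u = 0\); this is the cancellation that will unlock the endpoint integrability.

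Next, by duality between \(L^{Q/(Q-1)}\) and \(L^Q\), it suffices to control \(\bigabs{\dualprod{X_{\gamma} u}{\phi}_{V,G}}\) uniformly for \(\phi \in \D(G;V)\) with \(\|\phi\|_{L^Q} \leq 1\). Using the convolution identity above and moving the kernel to the test side, one writes
\begin{equation*}
\dualprod{X_{\gamma} u}{\phi}_{V,G} = \dualprod{f}{\psi}_{E,G}, \qquad \psi \defeq \tilde{\tilde{\K}}_{\gamma} * \phi,
\end{equation*}
where \(\tilde{\tilde{\K}}_{\gamma}(x) = \tilde{\K}_{\gamma}(x^{-1})^t\) is again a kernel of type \(1\). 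Since \(X_j \tilde{\tilde{\K}}_{\gamma}\) is a kernel of type \(0\) for each \(j\), Calder\'on--Zygmund theory on \(G\) (recalled at the end of Section~2) yields
\begin{equation*}
\|D\psi\|_{L^Q(G)} \leq C \|\phi\|_{L^Q(G;V)}.
\end{equation*}

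Finally, the Chanillo--Van Schaftingen duality estimate (Lemma~\ref{lem:CVS}) applied to \(f\), whose \(L(D)\)-image vanishes and whose symmetrized symbol \(\operatorname{Sym}(L)(D)\) is cocanceling, delivers
\begin{equation*}
\bigabs{\dualprod{f}{\psi}_{E,G}} \leq C \|f\|_{L^1(G;E)} \|D\psi\|_{L^Q(G)}.
\end{equation*}
Chaining the three bounds and taking the supremum over admissible \(\phi\) yields the claimed inequality. The main technical obstacle is justifying the use of Lemma~\ref{lem:CVS} against the function \(\psi\), which is smooth but typically not compactly supported; this will need a truncation/approximation step (multiplying \(\phi\) or \(\psi\) against a large bump, controlling error terms using \(f \in L^1\) with compact support, then passing to the limit). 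A secondary care-point is the interchange of convolution and pairing to arrive at the identity \(\dualprod{X_\gamma u}{\phi} = \dualprod{f}{\psi}\), which is routine once one writes \(\tilde{\K}_{\gamma}\) as a sum of a principal-value distribution and its local part via the Calder\'on--Zygmund machinery on \(G\).
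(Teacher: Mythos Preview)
Your proposal is correct and follows essentially the same strategy as the paper: pair \(X_\gamma u\) against a test function, move the kernel of type \(1\) to the test side, bound \(\lVert D\psi\rVert_{L^Q}\) by Calder\'on--Zygmund, and invoke the Bourgain--Brezis duality estimate on \(G\). The only cosmetic differences are that the paper uses the adjoint representation from Theorem~\ref{thm:kernel_main}\eqref{cor:i} (namely \(X_\gamma^t \psi = A^t(D)[\psi * \K_\gamma]\)) rather than the forward representation from part~\eqref{cor:iii} that you chose, and that the duality inequality you need is exactly Proposition~\ref{prop:cocan_est}---the consequence of Lemma~\ref{lem:CVS} obtained via the cocancellation of \(\operatorname{Sym}(L)(D)\)---rather than Lemma~\ref{lem:CVS} in its raw form.
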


The proof of Theorem~\ref{thm:can_est} depends on Theorem~\ref{thm:kernel_main}(\ref{cor:i}), as well as the following proposition, in the spirit of \cite{VanSchaftingen_2013}:
\begin{proposition} \label{prop:cocan_est}
Suppose \(L \in \End(E;F) \otimes T_{\ell}(\g_1)\), such that \(\operatorname{Sym}(L)(D)\) is cocanceling. Suppose \(f \in C^{\infty}(G;E)\) is such that \(L(D)f = 0\). Then for any \(\phi \in C^{\infty}_c(G;E)\),
\begin{equation*}
\left| \int_G \langle f, \phi \rangle_E \right| \lesssim \|f\|_{L^1(G;E)} \|D \phi\|_{L^Q(G;E)}.
\end{equation*}
\end{proposition}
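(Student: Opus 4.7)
The plan is to follow the Van Schaftingen template from \cite{VanSchaftingen_2013}, reducing the estimate to the Bourgain--Brezis--type duality on stratified homogeneous groups (Lemma~\ref{lem:CVS} of Chanillo--Van Schaftingen \cite{Chanillo_VanSchaftingen_2009}). The cocanceling hypothesis on $\operatorname{Sym}(L)(D)$ is to enter only through its algebraic consequence on the symbol \eqref{eq:SymL_symbolexp}, so the bulk of the argument is a translation of a polynomial identity into a differential one on $G$, modulo commutator corrections.

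The first step is algebraic: starting from $\bigcap_{\xi \in \R^m} \ker \operatorname{Sym}(L)(\xi) = \{0\}$, a polynomial-matrix Nullstellensatz argument in $\R[\xi_1,\dots,\xi_m]$ (analogous to Lemma~2.4 of \cite{VanSchaftingen_2013}) should produce an integer $s \geq \ell$ together with finitely many homogeneous polynomial matrices $M_j(\xi) \in \End(F;E)$ such that
\begin{equation*}
\lvert\xi\rvert^{2s}\, \operatorname{Id}_E = \sum_j M_j(\xi)\,\operatorname{Sym}(L)(\xi)^t
\end{equation*}
holds as an $\End(E;E)$-valued polynomial identity in $\xi$. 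This identity is then lifted to a differential representation on $G$ using the fundamental solution of $\sum_{i=1}^m X_i^{2s}$, which is available by Example~\ref{eg2} together with Theorem~\ref{thm:kernel_main}. Concretely, for $\phi \in C^\infty_c(G;E)$, solving $\sum_i X_i^{2s} \Phi = \phi$ and applying $M_j(D)$ termwise yields
\begin{equation*}
\phi = \operatorname{Sym}(L)(D)^t \Psi + R\phi,
\end{equation*}
where $\Psi \in C^{\infty}(G;F)$ is built from $\phi$ by applying a left-invariant operator of order $2s-\ell$ composed with a kernel of type $2s$, so that $\|D \Psi\|_{L^Q} \lesssim \|D\phi\|_{L^Q}$ via the $L^Q$-boundedness of kernels of type $1$; the remainder $R\phi$ collects all commutator errors produced by substituting $\xi_i \leftrightarrow X_i$. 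The essential property to verify is that $R\phi$ is strictly lower order and can be rewritten as a finite sum $\sum_i X_i h_i$ with $\|h_i\|_{L^Q} \lesssim \|D\phi\|_{L^Q}$.

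The third step is the pairing. Writing $\operatorname{Sym}(L)(D) = L(D) + (\operatorname{Sym}(L)(D) - L(D))$ and integrating by parts, one has
\begin{equation*}
\int_G \langle f,\phi\rangle_E = \langle L(D)f,\Psi\rangle_{F,G} + \langle(\operatorname{Sym}(L)(D)-L(D)) f,\Psi\rangle_{F,G} + \langle f,R\phi\rangle_{E,G}.
\end{equation*}
The first term vanishes by the hypothesis $L(D)f = 0$. The discrepancy $\operatorname{Sym}(L)(D) - L(D)$ is a sum of left-invariant monomials of order $\leq \ell-1$ arising from iterated commutators of the $X_i$; after moving all its derivatives onto $\Psi$, together with the structure imposed on $R\phi$, the remaining two terms reduce to a finite linear combination of integrals of the form $\int_G \langle f, X_i h\rangle_E$ with $\|D h\|_{L^Q} \lesssim \|D\phi\|_{L^Q}$. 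Each such integral is bounded by $\|f\|_{L^1}\|Dh\|_{L^Q}$ by a direct application of Lemma~\ref{lem:CVS}, giving the desired estimate.

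The hard part will be the bookkeeping in step two: every commutator error coming from non-commutativity of the $X_i$ must be massaged into the form $X_i h$ with a controlled $L^Q$ bound on $Dh$, so as to feed Lemma~\ref{lem:CVS}. This is precisely the reason the hypothesis is formulated on $\operatorname{Sym}(L)$ rather than on $L$, and an induction on the order $\ell$ (with the case $\ell = 1$ being the direct Chanillo--Van Schaftingen statement) is likely needed to handle the iterated commutator corrections cleanly.
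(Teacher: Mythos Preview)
Your first step does not go through, and this is a genuine gap rather than a bookkeeping issue. The identity \(\lvert\xi\rvert^{2s}\,\operatorname{Id}_E = \sum_j M_j(\xi)\,\operatorname{Sym}(L)(\xi)\) (in either order of composition) would force \(\operatorname{Sym}(L)(\xi)\) to be injective for every nonzero \(\xi\), and cocanceling does \emph{not} imply this. Take the divergence: \(E=\R^m\), \(F=\R\), \(\ell=1\), \(\operatorname{Sym}(L)(\xi)e=\xi\cdot e\). This operator is cocanceling, yet for each \(\xi\neq 0\) its kernel is the hyperplane \(\xi^{\perp}\), so no polynomial left inverse can exist. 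You appear to be importing the Nullstellensatz step from the \emph{canceling} side of \cite{VanSchaftingen_2013}, where injective ellipticity of \(A(\xi)\) is assumed pointwise; that hypothesis has no analogue here. Once step one fails, the subsequent decomposition \(\phi=\operatorname{Sym}(L)(D)^t\Psi+R\phi\) and the induction on \(\ell\) have nothing to stand on.

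The paper's argument is much shorter and uses the cocanceling hypothesis only through its coefficient-level consequence: writing \(\operatorname{Sym}(L)(\xi)=\sum_{\beta\in I_\ell}\tilde B_\beta\,\xi^\beta\), cocanceling is exactly \(\bigcap_\beta\ker\tilde B_\beta=\{0\}\), so one picks \emph{constant} linear maps \(C_\beta\colon F\to E\) with \(\sum_\beta C_\beta\tilde B_\beta=\operatorname{Id}_E\). Then \(\langle f,\phi\rangle_E=\sum_\beta\langle\tilde B_\beta f,\,C_\beta^*\phi\rangle_F\). The differential hypothesis \(L(D)f=0\) reads \(\sum_{\lambda\in\Ind_\ell}X_\lambda(B^\lambda f)=0\), which is precisely the divergence-free condition required by Lemma~\ref{lem:CVS} for the \(F\otimes T_\ell(\g_1)\)-valued field \(\sum_\lambda(B^\lambda f)\,X_\lambda^{\otimes}\). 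Testing that lemma against the symmetric tensor \(C_\beta^*\phi\cdot\tilde X^{\otimes\beta}\) (this is Corollary~\ref{cor:cocan_est}) bounds each summand by \(\|f\|_{L^1}\|D\phi\|_{L^Q}\). No fundamental solutions, no commutator corrections, no induction are needed; in particular the non-abelian structure of \(G\) causes no extra difficulty at this stage, because the symmetrization is used only algebraically and the full operator \(L(D)\) already annihilates \(f\).
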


To prove Proposition~\ref{prop:cocan_est}, we need Theorem 5.3 of \cite{Chanillo_VanSchaftingen_2009} (see also \cites{VanSchaftingen_2004_L1,VanSchaftingen_2008} for its Euclidean precedent). We reformulate it as follows:

\begin{lemma} \label{lem:CVS}
Suppose \(f \in C^{\infty}(G; T_{\ell}(\g_1))\) is given by 
\begin{equation*}
f = \sum_{\lambda \in \Ind_{\ell}} f^{\lambda} X^{\otimes}_{\lambda}
\end{equation*}
where each \(f^{\lambda} \in C^{\infty}(G; \R)\). Assume that
\begin{equation*}
\sum_{\lambda \in \Ind_{\ell}} X_{\lambda} f^{\lambda} = 0.
\end{equation*}
Then for any \(\varphi \in C^{\infty}_c(G; S_{\ell}(\g_1))\), we have
\begin{equation*}
\left| \int_G \langle f, \varphi \rangle_{T_{\ell}(\g_1)} \right| \lesssim \|f\|_{L^1(G;T_{\ell}(\g_1))} \|D \varphi \|_{L^Q(G;S_{\ell}(\g_1))}
\end{equation*}
where \(\langle \cdot, \cdot \rangle_{T_{\ell}(\g_1)}\) is an inner product on the inner product space \(T_{\ell}(\g_1)\).
\end{lemma}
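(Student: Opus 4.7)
The plan is to deduce this higher-order duality inequality from a first-order Bourgain--Brezis--type estimate on $G$, namely Theorem~5.3 of \cite{Chanillo_VanSchaftingen_2009}, by exploiting the symmetry of the test function $\varphi$ and, if necessary, an induction on $\ell$.

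First, I would reduce the problem to the symmetric case for $f$. Since $\varphi \in C^{\infty}_c(G; S_{\ell}(\g_1))$, the pairing $\int_G \langle f, \varphi \rangle_{T_{\ell}(\g_1)}$ is unchanged if we replace $f$ by its symmetrization $\operatorname{Sym}(f)$, whose $L^1$ norm is controlled by that of $f$. I would note, however, that the divergence condition $\sum_\lambda X_\lambda f^\lambda = 0$ is \emph{not} preserved under symmetrization because the $X_i$ do not commute, and so I would carry the original $f$ through, using the symmetry of $\varphi$ only at the level of the pairing itself.

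Next, I would proceed by induction on $\ell$. The base case $\ell = 1$ is Theorem~5.3 of \cite{Chanillo_VanSchaftingen_2009} directly, since the condition $\sum_{i=1}^{m} X_i f^i = 0$ is exactly the cancellation hypothesis there. For the inductive step, I would split each index $\lambda = (\lambda_1, \lambda')$ with $\lambda' \in \Ind_{\ell-1}$ and rewrite the given condition as $\sum_{\lambda_1} X_{\lambda_1} h^{\lambda_1} = 0$, where $h^{\lambda_1} = \sum_{\lambda'} X_{\lambda'} f^{(\lambda_1, \lambda')}$. Because $h$ involves $\ell - 1$ derivatives of $f$ and is not in $L^1$, I would not invoke the first-order CVS estimate directly on $h$; instead, I would integrate by parts inside the pairing to shift the $\ell - 1$ derivatives $X_{\lambda'}$ from $f^{(\lambda_1, \lambda')}$ onto $\varphi^{(\lambda_1, \lambda')}$, and then attempt to apply the first-order CVS estimate to the $L^1$ tensor field $f$ itself, paired against a new test function whose subelliptic gradient is controlled by $\|D\varphi\|_{L^{Q}(G;S_{\ell}(\g_1))}$ together with lower-order terms.

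The main obstacle will be that the non-commutativity of the $X_i$ introduces commutator terms during these integrations by parts, which are not a priori harmless. These commutators should nonetheless be controllable, since $[X_i, X_j] \in \g_2$ and iterated commutators lie in $\g_k$ for $k \geq 2$, producing terms which are of strictly lower homogeneous order and can be absorbed or iterated away. As a first and cleaner attempt, I would verify whether Theorem~5.3 of \cite{Chanillo_VanSchaftingen_2009} is already stated (or can be read directly) in the higher-order tensorial form used here, in which case the argument above reduces to a purely notational reformulation: matching their higher-order divergence hypothesis to the condition $\sum_\lambda X_\lambda f^\lambda = 0$, and matching their dual variable to our symmetric tensor $\varphi$ via the inner product $\langle \cdot , \cdot \rangle_{T_{\ell}(\g_1)}$.
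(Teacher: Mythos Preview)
The paper does not prove this lemma at all: immediately before the statement, the authors write that they ``need Theorem 5.3 of \cite{Chanillo_VanSchaftingen_2009} \dots We reformulate it as follows,'' and then state Lemma~\ref{lem:CVS}. In other words, the lemma \emph{is} Theorem~5.3 of Chanillo--Van~Schaftingen, restated in the tensor notation of this paper; that theorem already handles arbitrary $\ell$, not just $\ell = 1$. So your final paragraph has exactly the right instinct, and the answer to the question you pose there is yes.

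Your induction on $\ell$ is therefore unnecessary, and as you yourself notice, it runs into real difficulties: symmetrizing $f$ destroys the divergence condition, and shifting $\ell-1$ derivatives onto $\varphi$ via integration by parts produces commutator terms and, more seriously, replaces $\varphi$ by a test function involving $D^{\ell-1}\varphi$, whose $L^Q$ norm of the gradient is $\|D^\ell\varphi\|_{L^Q}$ rather than the $\|D\varphi\|_{L^Q}$ you need. The route you sketch does not close without essentially reproving the higher-order estimate from scratch, which is what \cite{Chanillo_VanSchaftingen_2009} already does. The correct ``proof'' here is simply to match notation with that reference.
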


The above lemma easily generalizes to the situation where \(f\) and \(\varphi\) takes value in \(F \otimes T_{\ell}(\g_1)\) and \(F \otimes S_{\ell}(\g_1)\) for some finite dimensional inner product space \(F\) over \(\R\). More precisely, we will need the following corollary of Lemma~\ref{lem:CVS}:

\begin{corollary} \label{cor:cocan_est}
Let \(F\) be a finite dimensional inner product space over \(\R\). Suppose \(f = \sum_{\lambda \in \Ind_{\ell}} f^{\lambda} X^{\otimes}_{\lambda} \in C^{\infty}(G; F \otimes T_{\ell}(\g_1))\) is such that
\begin{equation*}
\sum_{\lambda \in \Ind_{\ell}} X_{\lambda} f^{\lambda} = 0
\end{equation*}
componentwise. Then for any \(\beta \in I_{\ell}\) and any \(\phi \in C^{\infty}_c(G;F)\), we have
\begin{equation*}
\left| \int_G \langle \tilde{f}_{\beta}, \phi \rangle_F \right| \lesssim \sum_{\beta \in I_{\ell}} \|\tilde{f}_{\beta}\|_{L^1(G;F)} \|D \phi \|_{L^Q(G;F)}
\end{equation*}
where \(\langle \cdot, \cdot \rangle_F\) is an inner product on the inner product space \(F\). Here \begin{equation*}\tilde{f}_{\beta} \defeq \sum_{\lambda \in \Ind_{\ell} \colon \operatorname{Sym}(\lambda) = \beta} f^{\lambda}.\end{equation*}
\end{corollary}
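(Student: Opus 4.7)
The plan is to deduce Corollary~\ref{cor:cocan_est} from Lemma~\ref{lem:CVS} by working one coordinate of $F$ at a time, together with a judicious choice of symmetric test tensor. Fix an orthonormal basis $\{e_1, \dotsc, e_d\}$ of $F$ and decompose $f^{\lambda} = \sum_{j=1}^d f^{\lambda}_j\,e_j$ and $\phi = \sum_{j=1}^d \phi_j\,e_j$, with $f^{\lambda}_j \in C^{\infty}(G;\R)$ and $\phi_j \in C^{\infty}_c(G;\R)$. For each $j$, set
\[
 f_j \defeq \sum_{\lambda \in \Ind_{\ell}} f^{\lambda}_j\,X^{\otimes}_{\lambda} \in C^{\infty}(G;T_{\ell}(\g_1)).
\]
The componentwise hypothesis $\sum_{\lambda}X_{\lambda}f^{\lambda}=0$ in $F$ says exactly that $\sum_{\lambda}X_{\lambda}f^{\lambda}_j=0$ for each $j$, so each $f_j$ satisfies the hypothesis of Lemma~\ref{lem:CVS}.

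Given $\beta \in I_{\ell}$, I would apply Lemma~\ref{lem:CVS} to each $f_j$ with the symmetric test
\[
 \varphi_j \defeq N_{\beta}\,\phi_j\,\tilde{X}^{\otimes \beta} \in C^{\infty}_c(G;S_{\ell}(\g_1)),
\]
where $N_{\beta} = \binom{\ell}{\beta_1,\dotsc,\beta_m}$. Using the identity $\tilde{X}^{\otimes \beta} = N_{\beta}^{-1}\sum_{\mu \in \Ind_{\ell},\ \operatorname{Sym}(\mu)=\beta} X^{\otimes}_{\mu}$ together with the fact that $\{X^{\otimes}_{\lambda}\}_{\lambda \in \Ind_\ell}$ is orthonormal in $T_{\ell}(\g_1)$, a short pairing computation yields $\langle f_j,\varphi_j\rangle_{T_{\ell}(\g_1)} = \phi_j\,\tilde{f}_{\beta,j}$, where $\tilde{f}_{\beta,j} \defeq \sum_{\lambda\colon \operatorname{Sym}(\lambda)=\beta} f^{\lambda}_j$. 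Summing over $j$ recombines the left-hand side into $\int_G \langle \tilde{f}_{\beta},\phi\rangle_F$. Meanwhile $D\varphi_j = N_{\beta}\,(D\phi_j)\,\tilde{X}^{\otimes\beta}$ differs from $(D\phi_j)\,\tilde{X}^{\otimes\beta}$ only by a constant, so the $L^Q$ norm of $D\varphi_j$ is controlled by a constant times $\|D\phi_j\|_{L^Q(G)}$.

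Lemma~\ref{lem:CVS} then gives $\bigl|\int_G \phi_j\,\tilde{f}_{\beta,j}\bigr| \lesssim \|f_j\|_{L^1(G;T_{\ell}(\g_1))}\,\|D\phi_j\|_{L^Q(G)}$ for each $j$. Summing over $j$ and using the equivalence of norms on the finite-dimensional space $F$ furnishes the desired estimate, but with $\|f\|_{L^1(G;F\otimes T_{\ell}(\g_1))}$ in place of $\sum_{\beta}\|\tilde{f}_{\beta}\|_{L^1(G;F)}$ on the right-hand side. The main delicate point — tightening the right-hand side to the form stated in the corollary — is the observation that the pairing $\langle f_j,\varphi_j\rangle$ only ``sees'' the symmetric part $\operatorname{Sym}(f_j) = \sum_{\beta}\tilde{f}_{\beta,j}\,\tilde{X}^{\otimes\beta}$ of $f_j$, whose $L^1(G;S_\ell(\g_1))$ norm is comparable to $\sum_{\beta}\|\tilde{f}_{\beta,j}\|_{L^1(G)}$. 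Revisiting the proof of Theorem~5.3 of~\cite{Chanillo_VanSchaftingen_2009} underlying Lemma~\ref{lem:CVS}, one sees that when the test tensor lies in $S_{\ell}(\g_1)$, the $L^1$ norm of $f_j$ may be replaced by that of $\operatorname{Sym}(f_j)$; once this refinement is in place, summation over $j$ yields the claimed inequality.
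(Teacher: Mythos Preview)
Your core argument—choose the symmetric test $\varphi=\sum_{\lambda:\operatorname{Sym}(\lambda)=\beta}\phi\,X^{\otimes}_{\lambda}$ (equivalently a constant multiple of $\phi\,\tilde X^{\otimes\beta}$) and apply Lemma~\ref{lem:CVS} to each $F$-component—is exactly the paper's one-line proof, and that part is correct.

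The gap is in your last paragraph. You rightly notice that Lemma~\ref{lem:CVS} as stated yields $\|f_j\|_{L^1(G;T_\ell(\g_1))}$ on the right, not the a~priori smaller quantity $\sum_{\beta}\|\tilde f_{\beta,j}\|_{L^1}$ appearing in the corollary. But your proposed remedy—asserting that inside the proof of Theorem~5.3 of \cite{Chanillo_VanSchaftingen_2009} one may replace $f_j$ by $\operatorname{Sym}(f_j)$—is not justified and is unlikely to work as stated: the constraint $\sum_{\lambda} X_{\lambda} f_j^{\lambda}=0$ involves the operators $X_\lambda$, which differ from $X_{\sigma(\lambda)}$ on a non-abelian $G$, so it is a condition on the full tensor $f_j$ and not on $\operatorname{Sym}(f_j)$ alone. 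One cannot simply pass to the symmetrization inside that argument.

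The paper's own proof does not address this point either; it just invokes Lemma~\ref{lem:CVS}, which literally gives $\|f\|_{L^1}$. This is a benign imprecision in the statement of the corollary: its only use is in Proposition~\ref{prop:cocan_est}, where $f^{\lambda}=B^{\lambda}g$ for a fixed $g\in C^\infty(G;E)$, so that both $\|f\|_{L^1(G;F\otimes T_\ell(\g_1))}$ and $\sum_{\beta}\|\tilde f_{\beta}\|_{L^1(G;F)}$ are bounded by a constant times $\|g\|_{L^1(G;E)}$, which is the quantity actually used. Drop the unsupported refinement and record the bound with $\|f\|_{L^1(G;F\otimes T_\ell(\g_1))}$ on the right; that is what Lemma~\ref{lem:CVS} delivers and all that is needed downstream.
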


\begin{proof}
It suffices to take \(\varphi = \sum_{\lambda \in \Ind_{\ell} \colon \operatorname{Sym}(\lambda) = \beta} \phi  X^{\otimes}_{\lambda} \in C^{\infty}(G; S_{\ell}(\g_1))\) and apply Lemma~\ref{lem:CVS} to each \(F\)-component.
\end{proof}

\begin{proof}[Proof of Proposition~\ref{prop:cocan_est}]
Let \(L = \sum_{\lambda \in \Ind_{\ell}} B^{\lambda} X^{\otimes}_{\lambda} \in \End(E;F) \otimes T_{\ell}(\g_1)\), and \(\operatorname{Sym}(L)(\xi) = \sum_{\beta \in I_{\ell}} \tilde{B}_{\beta} \xi^{\beta}\). Since \(\operatorname{Sym}(L)(D)\) is cocanceling, the map 
\begin{equation*}
e \mapsto (\tilde{B}_{\beta}(e))_{\beta \in I_{\ell}}
\end{equation*}
is an injective linear map from \(E\) to \(F^N\) where \(N\) is the number of elements in \(I_{\ell}\). This map has a left inverse, i.e.\ for every \(\beta \in I_{\ell}\), there exists a linear map \(C_{\beta} \colon F \to E\) such that
\begin{equation*}
e = \sum_{\beta \in I_{\ell}} C_{\beta} \tilde{B}_{\beta} e
\end{equation*}
for all \(e \in E\). It follows that for any \(f \in C^{\infty}(G;E)\), we have 
\begin{equation*}
f(x) = \sum_{\beta \in I_{\ell}} C_{\beta} \tilde{B}_{\beta} f(x)
\end{equation*} 
for all \(x \in G\). Now
\begin{equation*}
\langle f, \phi \rangle_E = \sum_{\beta \in I_{\ell}} \langle \tilde{B}_{\beta} f, C_{\beta}^* \phi \rangle_F,
\end{equation*}
where \(C_{\beta}^* \colon E \to F\) is the adjoint to \(C_{\beta}\). The condition \(L(D)f = 0\) guarantees that 
\begin{equation*}
\sum_{\lambda \in \Ind_{\ell}} X_{\lambda} B^{\lambda} f = 0.
\end{equation*}
Thus Corollary~\ref{cor:cocan_est} applies, and we obtain the bound
\begin{equation*}
\left| \int_G \langle \tilde{B}_{\beta} f, C_{\beta}^* \phi \rangle_F \right| \lesssim \|f\|_{L^1(G;E)} \|D \phi\|_{L^Q(G;E)}
\end{equation*}
for all \(\beta \in I_{\ell}\). Summing over \(\beta\) gives the desired estimate for \(\int_G \langle f, \phi \rangle_E\).
\end{proof}

We may now prove Theorem~\ref{thm:can_est}. 
\begin{proof}[Proof of Theorem~\ref{thm:can_est}]
By Theorem~\ref{thm:kernel_main}(\ref{cor:i}), for any \(\gamma \in \Ind_{k-1}\), and any \(\psi \in C^{\infty}_c(G;V)\), we have
\begin{equation*}
X_{\gamma}^t \psi = A^t(D) [\psi * \K_{\gamma}]
\end{equation*}
where \(\K_{\gamma}\) is a \(\End(V;E)\)--valued kernel of type \(1\). Hence for any \(u \in C^{\infty}_c(G;V)\), we have
\begin{equation*}
\langle X_{\gamma} u, \psi \rangle_{L^2(G;V)} 
= \langle A(D) u, \psi * \K_{\gamma} \rangle_{L^2(G;E)}
\end{equation*}
We may then apply Proposition~\ref{prop:cocan_est} to \(f\defeq A(D)u\) and \(\phi\defeq \psi * \K_{\gamma} \). Since there exists \(L \in \End(E;F) \otimes S_{\ell}(\g_1)\), such that \(L(D) \circ A(D) = 0,\) and that \(\operatorname{Sym}(L)(D)\) is cocanceling, the above gives
\begin{align*}
\left| \langle X_{\gamma} u, \psi \rangle_{L^2(G;V)} \right| 
&\leq C \|A(D) u\|_{L^1(G;E)} \|D (\psi * \K_{\gamma} )\|_{L^Q(G;V)} \\
&\leq C \|A(D) u\|_{L^1(G;E)} \|\psi\|_{L^Q(G;V)}.\qedhere
\end{align*}
\end{proof}

\section{Hardy inequalities}

In this section we prove certain Hardy inequalities under the same assumptions as in Theorem~\ref{thm:can_est}, in the spirit of \cite{MR3283556}. 

\begin{theorem} \label{thm:Hardy}
Let \(k \in \N\), let \(V, E\) be finite dimensional inner product spaces over \(\R\), and let \(A \in \End(V;E) \otimes T_k(\g_1)\). Suppose \(A(D)\) is maximally hypoelliptic. 
Assume there exists \(L \in \End(E;F) \otimes T_{\ell}(\g_1)\) for some finite dimensional real inner product space \(F\) and some \(\ell \in \N\), such that 
\begin{equation*}L(D) \circ A(D) = 0,\end{equation*}
and that \(\operatorname{Sym}(L)(D)\) is cocanceling.
Then 
\begin{enumerate}[(a)]
\item \label{it_ainie9yoh5cau5Ieniid6aiv} for any \(\ell \in \{1, \dotsc, \min\{k,Q-1\} \}\), and any \(p \in [1, \frac{Q}{Q-\ell} )\), we have
\begin{equation*}
\left( \int_G \left(\|x\|^{Q-\ell} |D^{k-\ell} u(x)|_V \right)^p \frac{dx}{\|x\|^Q} \right)^{1/p} \leq C \| A(D) u \|_{L^1(G;E)}
\end{equation*}
for all \(u \in C^{\infty}_c(G;V)\); furthermore,
\item \label{it_Thah7ahshae4ozeujoh5thae} if \(k \geq Q\), then 
\begin{equation*}
\|D^{k-Q} u\|_{L^{\infty}(G;V)} \leq C \| A(D) u \|_{L^1(G;E)} 
\end{equation*}
for all \(u \in C^{\infty}_c(G;V)\).
\end{enumerate}
\end{theorem}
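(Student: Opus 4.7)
My plan is to follow the strategy of the proof of Theorem~\ref{thm:can_est}, substituting Proposition~\ref{prop:cocan_est} by the weighted Hardy-type variant Proposition~\ref{prop:Hardy_dual} alluded to in the introduction, so as to obtain weighted $L^p$ and $L^\infty$ bounds instead of the Sobolev-endpoint estimate.

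For part~(\ref{it_ainie9yoh5cau5Ieniid6aiv}), I first fix $\sigma \in \Ind_{k-\ell}$. Because $1 \leq \ell \leq Q-1$ forces $k-\ell < k$ and $k-\ell > k-Q$, Theorem~\ref{thm:kernel_main}(\ref{cor:i}) supplies a kernel $\K_\sigma$ of type $\ell$ with $X_\sigma^t \psi = A^t(D)[\psi * \K_\sigma]$ for $\psi \in \D(G;V)$; hence for $u \in C^\infty_c(G;V)$,
\begin{equation*}
\int_G \langle X_\sigma u, \psi\rangle_V\,dx = \int_G \langle A(D)u,\, \psi * \K_\sigma\rangle_E\,dx.
\end{equation*}
Since $L(D)\circ A(D) = 0$, the function $A(D)u$ satisfies $L(D)\bigl[A(D)u\bigr] = 0$, so Proposition~\ref{prop:cocan_est} yields
\begin{equation*}
\Bigl|\int_G \langle A(D)u, \psi * \K_\sigma\rangle_E\Bigr| \leq C\,\|A(D)u\|_{L^1}\,\|\psi * D\K_\sigma\|_{L^Q}.
\end{equation*}
The distribution $D\K_\sigma$ is a Calder\'on--Zygmund singular integral kernel when $\ell=1$ and a kernel of type $\ell-1$ when $\ell \geq 2$; in either case a weighted Stein--Weiss–type fractional integration estimate on the stratified group should give
\begin{equation*}
\|\psi * D\K_\sigma\|_{L^Q(G;V)} \leq C\,\|\psi\|_{L^{p'}(G;V;\,\|x\|^{p'\ell-Q}\,dx)},
\end{equation*}
whose scaling matches exactly the weighted norm in~(\ref{it_ainie9yoh5cau5Ieniid6aiv}) and which holds precisely in the range $p \in [1, Q/(Q-\ell))$ (so that $p'\ell - Q \geq 0$). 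Expressing the LHS of~(\ref{it_ainie9yoh5cau5Ieniid6aiv}) by duality as the supremum of $\bigl|\int \langle D^{k-\ell}u,\phi\rangle_V\bigr|$ over $\phi$ with unit weighted $L^{p'}$ norm and summing over $\sigma \in \Ind_{k-\ell}$ concludes the proof.

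For part~(\ref{it_Thah7ahshae4ozeujoh5thae}) I would work by a pointwise version of duality: by left-invariance of $A(D)$ and $X_\sigma$, bounding $\|D^{k-Q}u\|_{L^\infty}$ reduces to bounding $|X_\sigma u(0)|$ uniformly for each $\sigma \in \Ind_{k-Q}$. Theorem~\ref{thm:kernel_main}(\ref{cor:i}) still supplies $\K_\sigma$ with $X_\sigma u(0) = \int_G \langle A(D)u, \K_\sigma\rangle_E$, but now $|\sigma|=k-Q$ sits exactly at the borderline and $\K_\sigma$ is no longer homogeneous: from $\K = \K_\infty + P\log\|x\|$ with $\deg P = k-Q$ one sees that $\K_\sigma$ is a degree-zero homogeneous term plus a multiple of $\log\|x\|$, hence unbounded both at $0$ and at $\infty$. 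The cocanceling hypothesis is then decisive: since $\int_G \langle A(D)u, L^t(D)\rho\rangle_E = 0$ for every smooth $\rho$, one may replace $\K_\sigma$ by $\K_\sigma - L^t(D)\rho$ inside the integral, and a suitable choice of $\rho$ absorbs both the logarithm and the degree-zero part, producing the uniform bound $|X_\sigma u(0)| \leq C\|A(D)u\|_{L^1}$; this is the content of Proposition~\ref{prop:Hardy_dual}.

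The main obstacle is the weighted convolution step in~(\ref{it_ainie9yoh5cau5Ieniid6aiv}), particularly at the borderline $p=1$ or $\ell=1$ where classical Stein--Weiss degenerates and a genuinely $L^1$-Hardy argument (Proposition~\ref{prop:Hardy_dual}) is required, together with the construction in~(\ref{it_Thah7ahshae4ozeujoh5thae}) of a corrector $L^t(D)\rho$ that tames the logarithmic growth of $\K_\sigma$ uniformly in $u$. Both rely on extending the Euclidean Bourgain--Brezis–Van Schaftingen Hardy estimates of \cite{MR3283556} to the stratified setting, which is where the paper's technical novelty lies.
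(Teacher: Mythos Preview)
Your approach for part~(\ref{it_ainie9yoh5cau5Ieniid6aiv}) has a genuine gap. You open by announcing that you will replace Proposition~\ref{prop:cocan_est} by Proposition~\ref{prop:Hardy_dual}, but in the actual argument you invoke Proposition~\ref{prop:cocan_est}, landing on $\lVert D(\psi*\K_\sigma)\rVert_{L^Q}$, and then appeal to a ``weighted Stein--Weiss--type'' estimate to bound this by the weighted $L^{p'}$ norm of $\psi$. That estimate is not available in the full stated range: for $\ell=1$ the kernel $D\K_\sigma$ is Calder\'on--Zygmund rather than a fractional integral, and for $p=1$ the dual exponent is $p'=\infty$, where Stein--Weiss breaks down entirely. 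You acknowledge this yourself in the final paragraph but do not explain how Proposition~\ref{prop:Hardy_dual} would rescue the argument; within a duality framework it is not clear that it does.

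The paper avoids duality altogether. It uses the \emph{direct} representation from Theorem~\ref{thm:kernel_main}(\ref{cor:iii}) (not~(\ref{cor:i})): for $\gamma\in\Ind_{k-\ell}$,
\[
X_\gamma u(x)=\int_G \tilde{\K}_\gamma(y^{-1}x)\,[A(D)u](y)\,dy,
\]
with $\tilde{\K}_\gamma$ homogeneous of degree $-(Q-\ell)$. This is split as $\mathbf I(x)+\mathbf{II}(x)$, where $\mathbf I(x)$ replaces $\tilde{\K}_\gamma(y^{-1}x)$ by the \emph{frozen} value $\rho(\lVert y\rVert/\lVert x\rVert)\,\tilde{\K}_\gamma(x)$ for a cutoff $\rho$. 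For each fixed $x$, the function $y\mapsto\rho(\lVert y\rVert/\lVert x\rVert)\,\tilde{\K}_\gamma(x)$ is a compactly supported smooth test function in~$y$, and Proposition~\ref{prop:Hardy_dual} applied pointwise in $x$ gives $\lVert x\rVert^{Q-\ell}\lvert\mathbf I(x)\rvert\lesssim\int_{\lVert y\rVert\le\lVert x\rVert/2}(\lVert y\rVert/\lVert x\rVert)\,\lvert A(D)u(y)\rvert\,dy$; Minkowski then yields the weighted $L^p$ bound for every $p\ge 1$. The remainder $\mathbf{II}(x)$ is handled by elementary pointwise kernel bounds (smoothness of $\tilde{\K}_\gamma$ near $y=0$, homogeneity for $\lVert y\rVert\gtrsim\lVert x\rVert$) and Minkowski again, using only $p<Q/(Q-\ell)$. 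No weighted convolution inequality is needed.

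For part~(\ref{it_Thah7ahshae4ozeujoh5thae}), your plan to correct $\K_\sigma$ by some $L^t(D)\rho$ so as to kill the logarithm is vague, and it is not clear a single $\rho$ can be chosen uniformly in $u$. The paper instead reduces (\ref{it_Thah7ahshae4ozeujoh5thae}) to (\ref{it_ainie9yoh5cau5Ieniid6aiv}): writing $D^{k-Q}u(0)=-\int_0^\infty\frac{d}{d\lambda}D^{k-Q}u(\delta_\lambda x_0)\,d\lambda$ and averaging over $\lVert x_0\rVert=1$ gives
\[
\lvert D^{k-Q}u(0)\rvert\lesssim\sum_{j=1}^r\int_G \lVert x\rVert^{-(Q-j)}\,\lvert D^{k-(Q-j)}u(x)\rvert\,dx,
\]
each term of which is bounded by part~(\ref{it_ainie9yoh5cau5Ieniid6aiv}) with $p=1$.
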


The proof relies on Theorem~\ref{thm:kernel_main}(\ref{cor:iii}), as well as the following proposition:

\begin{proposition} \label{prop:Hardy_dual}
Let \(E, F\) be finite dimensional inner product spaces over \(\R\), and \(\ell \in \N\). Suppose \(L \in \End(E;F) \otimes T_{\ell}(\g_1)\), such that \(\operatorname{Sym}(L)(D)\) is cocanceling. Suppose \(f \in C^{\infty}(G;E)\) is such that \(L(D)f = 0\). Then for any \(\phi \in C^{\infty}_c(G, E)\), we have
\begin{equation*}
\left | \int_G \langle f, \phi \rangle_E \right |  \leq C \sum_{j=1}^{\ell} \int_G |f(x)|_E |D^j \phi(x)|_E \|x\|^j dx.
\end{equation*}
\end{proposition}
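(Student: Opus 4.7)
The plan is to adapt the proof of Proposition~\ref{prop:cocan_est} by replacing Lemma~\ref{lem:CVS} with a Hardy-weighted analog. First I would apply the same cocanceling reduction: since \(\operatorname{Sym}(L)(D)\) is cocanceling, there exist linear maps \(C_\beta \colon F \to E\), one for each \(\beta \in I_\ell\), such that \(e = \sum_\beta C_\beta \tilde{B}_\beta e\) for every \(e \in E\). Writing \(L = \sum_\lambda B^\lambda X^\otimes_\lambda\), pointwise evaluation yields
\[
\int_G \langle f, \phi \rangle_E = \sum_{\beta \in I_\ell} \int_G \langle \tilde{B}_\beta f, C_\beta^* \phi \rangle_F.
\]
Setting \(g^\lambda \defeq B^\lambda f \in C^\infty(G;F)\), the hypothesis \(L(D)f = 0\) becomes the divergence-type identity \(\sum_{\lambda \in \Ind_\ell} X_\lambda g^\lambda = 0\) componentwise in \(F\), with \(\tilde{B}_\beta f = \sum_{\operatorname{Sym}(\lambda)=\beta} g^\lambda\).

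Second, the proposition reduces to the following weighted analog of Lemma~\ref{lem:CVS} / Corollary~\ref{cor:cocan_est}: for any \(g = \sum_\lambda g^\lambda X^\otimes_\lambda \in C^\infty(G; F \otimes T_\ell(\g_1))\) with \(\sum_\lambda X_\lambda g^\lambda = 0\) componentwise, and any \(\Phi \in \D(G; F \otimes S_\ell(\g_1))\),
\[
\Bigl| \int_G \langle g, \Phi \rangle \Bigr| \le C \sum_{j=1}^{\ell} \int_G |g(x)| \, |D^j \Phi(x)| \, \|x\|^j \, dx.
\]
Taking \(\Phi \defeq \sum_\beta (C_\beta^* \phi) \, \tilde{X}^{\otimes \beta} \in \D(G; F \otimes S_\ell(\g_1))\), so that \(|D^j \Phi(x)| \lesssim |D^j \phi(x)|\) and \(|g(x)| \lesssim |f(x)|\), and summing over \(\beta\), produces exactly the bound claimed.

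The main obstacle is the weighted sub-lemma above, a Hardy-type refinement of the Chanillo--Van Schaftingen duality. My plan is to import the Bogovski\v{\i}-type argument of \cite{Chanillo_VanSchaftingen_2009}, which represents \(\Phi\) through an auxiliary symmetric-tensor potential, and to extract the weights \(\|x\|^j\) by dyadic localization adapted to the homogeneous norm: on each annulus \(A_k \defeq \{x \in G \st \|x\| \sim 2^k\}\) one applies the unweighted estimate to a cutoff \(\Phi_k\), rescales by \(\delta_{2^{-k}}\) to normalize, and uses the scaling of the \(L^Q\)-Sobolev bound in the variable \(\delta_{2^{-k}}(x)\) to trade \(\|D \Phi_k\|_{L^Q}\) for a pointwise quantity dominated by \(2^{kj} \sup_{A_k} |D^j \Phi|\); summing over \(k\) yields the pointwise weighted right-hand side since \(2^{kj} \sim \|x\|^j\) on \(A_k\). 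An equivalent route, more in the spirit of \cite{MR3283556}, is to Taylor-expand \(\Phi\) along the dilation curve \(t \mapsto \delta_t x\) and pair the remainder against \(g\), using the divergence-free hypothesis to integrate by parts and transfer up to \(\ell\) derivatives onto \(\Phi\); the non-commutative structure of \(G\) produces precisely the sum over \(j = 1, \dotsc, \ell\) on the right. The delicate point in either route is the joint bookkeeping of the high-order symmetric-tensor indices, the weights, and the non-isotropic scaling, which I would handle by working one \(\beta \in I_\ell\) at a time after the first reduction.
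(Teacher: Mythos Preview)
Your opening reduction via the cocanceling identity \(e=\sum_\beta C_\beta \tilde B_\beta e\) matches the paper exactly. After that point, however, the paper takes a far shorter and more elementary route that you miss entirely: it never invokes Lemma~\ref{lem:CVS} or any Bourgain--Brezis machinery. Instead it observes that, because the \(X_i\) act as \(\partial_{x_i}\) on polynomials in the \(\g_1\)-coordinates, one has \(\frac{1}{\beta!}L^t(D)(x^\beta)=\tilde B_\beta^*\); Leibniz then gives
\[
\phi(x)=L^t(D)\Bigl[\sum_{\beta\in I_\ell}\tfrac{1}{\beta!}\,x^\beta\,C_\beta^*\phi(x)\Bigr]+O\Bigl(\sum_{j=1}^{\ell}\|x\|^j|D^j\phi(x)|_E\Bigr),
\]
since every term in which at least one derivative falls on \(\phi\) leaves a degree-\(j\) polynomial factor. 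Pairing with \(f\) and using \(\int\langle f,L^t(D)\psi\rangle=\int\langle L(D)f,\psi\rangle=0\) finishes the proof in one line. No dyadic decomposition, no rescaling, no \(L^Q\) norm appears.

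Your plan, by contrast, routes everything through a weighted analog of Lemma~\ref{lem:CVS}, and both proposed proofs of that analog have problems. Route~(a) cannot yield the stated inequality: Lemma~\ref{lem:CVS} produces a bound of the form \(\|g\|_{L^1(G)}\,\|D\Phi_k\|_{L^Q(G)}\), with \(g\) and \(\Phi\) in \emph{separate} integrals; no amount of dyadic cutting and rescaling recombines them into the same-point product \(\int |g(x)|\,|D^j\Phi(x)|\,\|x\|^j\,dx\) that you need. (Summing over annuli keeps the global \(\|g\|_{L^1}\) factor, not a localized one.) Route~(b) is too vague to assess, and Taylor-expanding along \(t\mapsto\delta_t x\) lands the derivatives of \(\Phi\) at \(\delta_t x\) rather than at \(x\), so the same recombination issue arises. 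Ironically, your sub-lemma \emph{is} true and is most easily proved by exactly the polynomial trick above (take \(\psi=\sum_\beta\frac{1}{\beta!}x^\beta\Phi_\beta\), note \(X_{\bar\lambda}\psi=\Phi_{\operatorname{Sym}(\lambda)}+O(\cdots)\), and integrate by parts), but once you see that trick you no longer need the sub-lemma at all.
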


The key here is that the sum on the right hand side begins with \(j = 1\) (instead of \(j = 0\)).

\begin{proof}[Proof of Proposition~\ref{prop:Hardy_dual}]
Let \(L = \sum_{\lambda \in \Ind_{\ell}} B^{\lambda} X^{\otimes}_{\lambda} \in \End(E;F) \otimes T_{\ell}(\g_1)\), and \(\operatorname{Sym}(L)(\xi) = \sum_{\beta \in I_{\ell}} \tilde{B}_{\beta} \xi^{\beta}\). Since \(\operatorname{Sym}(L)(D)\) is cocanceling, we may construct, as in the proof of Proposition~\ref{prop:cocan_est}, a linear map \(C_{\beta} \colon F \to E\) for every \(\beta \in I_{\ell}\) such that 
\begin{equation*}
e = \sum_{\beta \in I_{\ell}} C_{\beta} \tilde{B}_{\beta} e
\end{equation*}
for all \(e \in E\). Now write \(x^{\beta}\) for \(x_1^{\beta_1} \dots x_m^{\beta_m}\) for \(\beta \in I_{\ell}\). Then
\begin{equation*}
\frac{1}{\beta!} L^t(D) (x^{\beta})  = \tilde{B}_{\beta}^* 
\end{equation*}
for all \(\beta \in I_{\ell}\), so
\begin{equation*}
\phi(x) = L^t(D) \sum_{\beta \in I_{\ell}} \frac{1}{\beta!} (x^{\beta} C_{\beta}^* \phi(x)) + O\left ( \sum_{j=1}^{\ell} \|x\|^j |D^j \phi(x)|_E \right )
\end{equation*}
for all \(x \in G\). Plugging this back into \(\int_G \langle f, \phi \rangle_E\), and noting that \(L(D) f = 0\), we have
\begin{equation*}
\left | \int_G \langle f, \phi \rangle_E \right | 
\leq C \sum_{j=1}^{\ell} \int_G |f(x)|_E |D^j \phi(x)|_E \|x\|^j dx,
\end{equation*}
as desired.
\end{proof}

\begin{proof}[Proof of Theorem~\ref{thm:Hardy}]
For \eqref{it_ainie9yoh5cau5Ieniid6aiv}, first let \(\ell \in \{1,\dotsc,\min\{k,Q-1\}\}\) so that \(k-Q < k - \ell < k\). By Theorem~\ref{thm:kernel_main}(\ref{cor:iii}), for any \(\gamma \in \Ind_{k-\ell}\), and any \(u \in C^{\infty}_c(G;V)\), we have
\begin{equation*}
X_{\gamma} u(x)
= \int_G \tilde{\K}_{\gamma}(y^{-1} x) [A(D) u](y) dy
\end{equation*}
for all \(x \in G\), where \(\tilde{\K}_{\gamma}\) is an \(\End(E;V)\)--valued function, that is \(C^{\infty}\) on \(G \setminus \{0\}\) and homogeneous of degree \(k-Q-(k-\ell)= -(Q-\ell)\). Let \(\rho \in C^{\infty}_c(\R;\R)\) be a cut-off function, so that \(\rho(t) = 1\) for \(|t| \leq 1/4\), and \(\rho(t) = 0\) for \(|t| \geq 1/2\). We then have \(X_{\gamma} u(x) = \mathbf{I}(x) + \mathbf{II}(x)\), where
\begin{equation*}
\mathbf{I}(x) \defeq \int_G \rho\left( \frac{\|y\|}{\|x\|} \right) \tilde{\K}_{\gamma}(x) [A(D) u](y) dy,
\end{equation*}
and
\begin{equation*}
\mathbf{II}(x) \defeq \int_G \left[ \tilde{\K}_{\gamma}(y^{-1} x) - \rho\left( \frac{\|y\|}{\|x\|} \right) \tilde{\K}_{\gamma}(x) \right] [A(D) u](y) dy.
\end{equation*}
To estimate \(\mathbf{I}(x)\), since \(L(D) \circ A(D) = 0\) and \(\operatorname{Sym}(L)(D)\) is cocanceling, we may then apply Proposition~\ref{prop:Hardy_dual} to \(f(y)\defeq [A(D)u](y)\) and \(\phi(y)\defeq \rho\left( \frac{\|y\|}{\|x\|} \right) \tilde{\K}_{\gamma}(x)\). Hence
\begin{equation*}
\|x\|^{Q-\ell} |\mathbf{I}(x)|_V \leq \int_{\|y\| \leq \frac{\|x\|}{2}} \frac{\|y\|}{\|x\|} |A(D) u(y)|_E dy.
\end{equation*} 
Since \(p \geq 1\), from Minkowski inequality, it follows that
\begin{equation} \label{eq:Hardy_I_est}
\left( \int_G \left(\|x\|^{Q-\ell} |\mathbf{I}(x)|_V \right)^p \frac{dx}{\|x\|^Q} \right)^{1/p} \leq C \| A(D) u \|_{L^1(G;E)}.
\end{equation}
Next,
\begin{align*}
\|x\|^{Q-\ell} |\mathbf{II}(x)|_V &\leq \int_{\|y\| \leq \frac{\|x\|}{2}} \frac{\|y\|}{\|x\|} |A(D) u(y)|_E dy \\
&\quad +  \int_{\|y\| \geq \frac{\|x\|}{2}} \frac{\|x\|^{Q-\ell}}{\|y^{-1} x\|^{Q-\ell}} |A(D) u(y)|_E dy.
\end{align*}
Since \(p \in [1, \frac{Q}{Q-\ell})\), from Minkowski inequality again, it follows that
\begin{equation} \label{eq:Hardy_II_est}
\left( \int_G \left(\|x\|^{Q-\ell} |\mathbf{II}(x)|_V \right)^p \frac{dx}{\|x\|^Q} \right)^{1/p} \leq C \| A(D) u \|_{L^1(G;E)}.
\end{equation}
Combining (\ref{eq:Hardy_I_est}) and (\ref{eq:Hardy_II_est}), we get the desired conclusion in part (a).

For \eqref{it_Thah7ahshae4ozeujoh5thae}, note that 
\begin{equation*}
D^{k-Q} u(0) = -\int_0^{\infty} \frac{d}{d \lambda} D^{k-Q} u(\delta_{\lambda} x_0) d\lambda
\end{equation*}
for any \(x_0 \in G\) with \(\|x_0\|=1\). Hence
\begin{equation*}
|D^{k-Q} u(0)|_V \leq \int_0^{\infty} \sum_{j=1}^r \lambda^{j-1} |D^j D^{k-Q} u(\delta_{\lambda} x_0)|_V d \lambda
\end{equation*}
for any such \(x_0\). Integrating over all such \(x_0\), we see that
\begin{equation*}
|D^{k-Q} u(0)|_V \leq C \int_G \sum_{j=1}^r \frac{ |D^{k-(Q-j)} u(x)|_V }{\|x\|^{Q-j}} dx.
\end{equation*}
Hence by part \eqref{it_ainie9yoh5cau5Ieniid6aiv} above (with \(p = 1\)), we have
\begin{equation*}
|D^{k-Q} u(0)|_V \lesssim \|A(D) u\|_{L^1(G;E)},
\end{equation*}
proving part \eqref{it_Thah7ahshae4ozeujoh5thae}.  
\end{proof}

\section{Construction of a compatible \texorpdfstring{\(L(D)\)}{L(D)}}
 
In applying Theorem~\ref{thm:can_est} and Theorem~\ref{thm:Hardy}, we need to find some \(L(D)\) such that \(L(D) \circ A(D) = 0\) and that \(\operatorname{Sym}(L)(D)\) is cocanceling. In that regard, we remark that the left-invariant differential operators on \(G\) form a left Noetherian ring \cite{Knapp2002}*{proposition 3.27 and problems 3.11--3.13}, and the left-invariant differential operators \(K (D) \colon C^{\infty}(G;E) \to C^{\infty}(G;\R)\) such that 
\begin{equation*}
  K (D) \circ A (D) = 0
\end{equation*}
form a left module over this left Noetherian ring. Hence by a non-commutative version of the Hilbert basis theorem, this module is finitely left-generated. Let us think of each \(K(D)\) in this module as a row vector, multiplying the matrix \(A(D)\) on the left, and let \(L (D)\) be a matrix of left-invariant differential operators so that the rows generate this module. By multiplying some further left-invariant differential operators, we can make \(L (D)\) a homogeneous operator while maintaining \(L(D) \circ A(D) = 0\). The question is then whether one can find such an \(L(D)\) such that \(\operatorname{Sym}(L)(D)\) is cocanceling. Below we develop a robust way that works for our examples of interest.

\begin{proposition} \label{prop:cocan_preserve}
Suppose \(L_0 \in \End(E;F) \otimes T_{\ell}(\g_1)\), and suppose \(\operatorname{Sym}(L_0)(D)\) is cocanceling. Let \(M \in \End(F;W) \otimes T_{\ell'}(\g_1)\) for some finite dimensional real inner product space \(W\) and some \(\ell' \in \N\). If there exists \(\xi_0 \in \R^m\) such that \(\operatorname{Sym}(M)(\xi_0)\) is injective, then \(\operatorname{Sym}(M \circ L_0)(D)\) is cocanceling.
\end{proposition}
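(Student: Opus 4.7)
The plan is to reduce the cocanceling condition for $\operatorname{Sym}(M\circ L_0)(D)$ to a density statement about polynomial maps on $\R^m$. Writing $M=\sum_{\mu\in\Ind_{\ell'}}C^{\mu}X^{\otimes}_{\mu}$ and $L_0=\sum_{\lambda\in\Ind_{\ell}}B^{\lambda}X^{\otimes}_{\lambda}$, I would interpret $M\circ L_0$ as the element $\sum_{\mu,\lambda}(C^{\mu}B^{\lambda})\,X^{\otimes}_{\mu}\otimes X^{\otimes}_{\lambda}\in\End(E;W)\otimes T_{\ell+\ell'}(\g_1)$, and the first step is to verify the multiplicative identity
\begin{equation*}
\operatorname{Sym}(M\circ L_0)(\xi)\;=\;\operatorname{Sym}(M)(\xi)\circ\operatorname{Sym}(L_0)(\xi)\qquad\text{for all }\xi\in\R^m.
\end{equation*}
This is immediate from the description of the symbol in \eqref{eq:SymL_symbolexp}: symmetrization sends the tensor monomial $X^{\otimes}_{\mu}\otimes X^{\otimes}_{\lambda}$ to the commutative monomial $\xi^{\mu}\xi^{\lambda}$, so concatenation of tensors corresponds to multiplication of polynomials in commuting variables.

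Given this identity, cocanceling of $\operatorname{Sym}(M\circ L_0)(D)$ reduces to the following statement: for every $e\in E\setminus\{0\}$, there exists $\xi\in\R^m$ with $\operatorname{Sym}(M)(\xi)\operatorname{Sym}(L_0)(\xi)[e]\ne 0$. Fix such an $e$. The cocanceling hypothesis on $\operatorname{Sym}(L_0)(D)$ says that $\xi\mapsto\operatorname{Sym}(L_0)(\xi)[e]$ is a polynomial map $\R^m\to F$ that is not identically zero, so its non-vanishing locus
\begin{equation*}
U_{e}\defeq \{\xi\in\R^m\st \operatorname{Sym}(L_0)(\xi)[e]\ne 0\}
\end{equation*}
is open and dense in $\R^m$ (the zero set of a nontrivial polynomial on $\R^m$ has empty interior).

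Next I would show that the set $V\defeq \{\xi\in\R^m\st \operatorname{Sym}(M)(\xi)\text{ is injective}\}$ is also open and dense. Openness is clear since injectivity of a linear map between finite-dimensional spaces is stable under small perturbations of its matrix entries. For density, injectivity of $\operatorname{Sym}(M)(\xi)$ is equivalent to the non-vanishing of at least one maximal-rank minor of the matrix representing $\operatorname{Sym}(M)(\xi)$, and each such minor is a polynomial in $\xi$. The hypothesis $\xi_{0}\in V$ guarantees that at least one such minor is not identically zero, so its non-vanishing locus is an open dense subset of $\R^m$ contained in $V$.

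Two open dense subsets of $\R^m$ have nonempty intersection (in fact a dense one), so I can choose $\xi_{1}\in U_{e}\cap V$. For this $\xi_{1}$, the vector $\operatorname{Sym}(L_0)(\xi_{1})[e]\in F$ is nonzero, and $\operatorname{Sym}(M)(\xi_{1})$ is injective on $F$; hence $\operatorname{Sym}(M)(\xi_{1})\operatorname{Sym}(L_0)(\xi_{1})[e]\ne 0$ in $W$, which proves that $e\notin\bigcap_{\xi\in\R^m}\ker\operatorname{Sym}(M\circ L_0)(\xi)$, as required. The only step that calls for some attention is the opening algebraic identification of $\operatorname{Sym}(M\circ L_0)(\xi)$ with the product $\operatorname{Sym}(M)(\xi)\operatorname{Sym}(L_0)(\xi)$; once this bookkeeping is done, the rest is a routine density argument for polynomials on $\R^m$.
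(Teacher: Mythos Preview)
Your proof is correct. Both you and the paper start from the same multiplicative identity \(\operatorname{Sym}(M\circ L_0)(\xi)=\operatorname{Sym}(M)(\xi)\circ\operatorname{Sym}(L_0)(\xi)\), but the routes then diverge. The paper first isolates a separate lemma (Lemma~\ref{lem:cocan_open}) asserting that cocanceling can be tested on \emph{any} non-empty open subset of \(\R^m\); that lemma is proved by a somewhat delicate combinatorial basis argument (choosing \(m+\ell-1\) generic vectors). The proposition then follows by taking \(U\) to be a neighborhood of \(\xi_0\) on which \(\operatorname{Sym}(M)(\xi)\) stays injective. You instead work vector-by-vector: for each \(e\ne 0\) you exhibit directly a point \(\xi_1\) where the composed symbol does not kill \(e\), by intersecting two open dense sets coming from non-vanishing loci of real polynomials. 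Your argument is more elementary and self-contained, avoiding the basis construction entirely; the paper's approach, on the other hand, yields the reusable Lemma~\ref{lem:cocan_open} as a by-product, which is of some independent interest.
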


Here \begin{equation*}(M \circ L_0) \defeq \sum_{\gamma' \in \Ind_{\ell'}} \sum_{\gamma \in \Ind_{\ell}} (M^{\gamma'} \circ L_0^{\gamma}) X^{\otimes}_{\gamma'} \otimes X^{\otimes}_{\gamma} \in \End(E;W) \otimes T_{\ell+\ell'}(\g_1),\end{equation*} if \(M = \sum_{\gamma' \in \Ind_{\ell'}} M^{\gamma'} X^{\otimes}_{\gamma'}\) and \(L_0 = \sum_{\gamma \in \Ind_{\ell}} L_0^{\gamma} X^{\otimes}_{\gamma}\).

The point of this proposition is that given \(A(D)\), typically it is not too difficult, by looking at the Euclidean analog for instance, to come up with an \(L_0 \in \End(E;F) \otimes T_{\ell}(\g_1)\) with \(\operatorname{Sym}(L_0)(D)\) cocanceling such that \(L_0(D) \circ A(D)\) is almost zero (in the sense that it involves a lot of commutators). We will then apply some \(M(D)\) that satisfies the conditions of Proposition~\ref{prop:cocan_preserve}, to the composition \(L_0(D) \circ A(D)\), hoping that we have
\begin{equation*}
(M \circ L_0 - N)(D) \circ A(D) = 0
\end{equation*} 
for some \(N(D)\) with \(\operatorname{Sym}(N)(D) = 0\). Then we may apply Theorem~\ref{thm:can_est} and Theorem~\ref{thm:Hardy} with \(L(D) \defeq (M \circ L_0 - N)(D)\), because Proposition~\ref{prop:cocan_preserve} guarantees that \(\operatorname{Sym}(L)(D) = 0\). See examples in Section~\ref{sect:applications}.

The proof of Proposition~\ref{prop:cocan_preserve} relies on the following lemma:

\begin{lemma} \label{lem:cocan_open}
Suppose \(L \in \End(E;F) \otimes T_{\ell}(\g_1)\). Then \(\operatorname{Sym}(L)(D)\) is cocanceling, if and only if 
\begin{equation*}
\bigcap_{\xi \in U} \operatorname{ker} \, \operatorname{Sym}(L)(\xi) = \{0\}
\end{equation*}
for any non-empty open subset \(U\) of \(\R^m\).
\end{lemma}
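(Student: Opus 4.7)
The plan is to deduce the lemma from the fact that a polynomial on $\R^m$ that vanishes on a non-empty open set vanishes identically. Recall from \eqref{eq:SymL_symbolexp} that
\begin{equation*}
\operatorname{Sym}(L)(\xi) = \sum_{\beta \in I_{\ell}} \tilde{B}_{\beta} \xi^{\beta} \in \End(E;F),
\end{equation*}
so for each fixed $e \in E$, the map $\xi \mapsto \operatorname{Sym}(L)(\xi)[e]$ is an $F$-valued polynomial on $\R^m$, each of whose components (in a fixed basis of $F$) is a polynomial in $\xi$ of degree at most $\ell$.

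One direction is immediate: since $\R^m$ is itself a non-empty open subset of $\R^m$, the condition $\bigcap_{\xi \in U} \ker \operatorname{Sym}(L)(\xi) = \{0\}$ for every non-empty open $U$ specializes to the cocanceling condition upon taking $U = \R^m$.

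For the converse, suppose $\operatorname{Sym}(L)(D)$ is cocanceling, fix a non-empty open $U \subset \R^m$, and take any $e \in \bigcap_{\xi \in U} \ker \operatorname{Sym}(L)(\xi)$. Then each component of the polynomial $\xi \mapsto \operatorname{Sym}(L)(\xi)[e]$ vanishes on $U$. Since a polynomial on $\R^m$ that vanishes on a non-empty open set vanishes identically (e.g.\ by applying $\partial_\xi^\alpha$ at a point of $U$ for every multiindex $\alpha$), we conclude that $\operatorname{Sym}(L)(\xi)[e] = 0$ for every $\xi \in \R^m$. The cocanceling assumption then forces $e = 0$, completing the proof.

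There is no real obstacle here; the only thing to be careful about is to phrase the polynomial vanishing argument in a way that handles the vector-valued nature of $\operatorname{Sym}(L)(\xi)[e]$, which is done simply by working component-wise in a chosen basis of $F$.
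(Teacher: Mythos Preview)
Your proof is correct and takes a more direct route than the paper. You simply observe that $\xi \mapsto \operatorname{Sym}(L)(\xi)[e]$ is an $F$-valued polynomial and invoke the identity theorem for polynomials on an open set. The paper instead gives a constructive argument: it chooses vectors $v_1,\dotsc,v_{m+\ell-1} \in \R^m$ in general position so that the products $v_S \cdot \xi \defeq \prod_{v \in S} v\cdot\xi$ (over $\ell$-element subsets $S$) form a basis of $S_\ell(\g_1)$, arranges that each dual evaluation point $\xi_S$ lies in $U$, and thereby exhibits a \emph{finite} subset $\{\xi_S\}_{S \in \Lambda} \subset U$ with $\bigcap_{S} \ker \operatorname{Sym}(L)(\xi_S) = \{0\}$. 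Your argument is shorter and entirely sufficient for the application to Proposition~\ref{prop:cocan_preserve}; the paper's argument buys the extra information that finitely many well-chosen points in $U$ already witness the cocanceling condition.
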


\begin{proof}
Suppose \(\operatorname{Sym}(L)(D)\) is cocanceling, and \(U\) is a non-empty open subset of \(\R^m\). Then there exist vectors \(v_1, \dotsc, v_{m+\ell-1} \in \R^m\) such that the following holds:
\begin{enumerate}[(i)]
\item any \(m\) distinct vectors from \(v_1, \dotsc, v_{m+\ell-1}\) are linearly independent;
\item for any set \(S\) of \(\ell\) distinct vectors from \(v_1, \dotsc, v_{m+\ell-1}\), the orthogonal complement of the \(m-1\) vectors in \(\{v_1, \dotsc, v_{m+\ell-1}\} \setminus S\) contains some \(\xi_S \in U\).
\end{enumerate}
Let \(\Lambda\) be the collection of all sets of \(\ell\) distinct vectors from \(v_1, \dotsc, v_{m+\ell-1}\). For \(S \in \Lambda\), let's write \(v_{S} \cdot \xi\) as a shorthand for \(\prod_{v \in S} v \cdot \xi\). Then \(\{v_{S} \cdot \xi\}_{S \in \Lambda}\) is a basis of \(S_{\ell}(\g_1)\). Hence \(\operatorname{Sym}(L)(\xi)\) can be expanded as \begin{equation*}\operatorname{Sym}(L)(\xi) = \sum_{S \in \Lambda} C_S (v_{S} \cdot \xi)\end{equation*} where each \(C_S \in \End(E;F)\). By setting \(\xi = \xi_S\), we see that \(C_S = \operatorname{Sym}(L)(\xi_S)\), so 
\begin{equation*}
\operatorname{Sym}(L)(\xi) = \sum_{S \in \Lambda} \operatorname{Sym}(L)(\xi_S) (v_{S} \cdot \xi).
\end{equation*} 
This shows that 
\begin{equation*}
\bigcap_{S \in \Lambda} \operatorname{ker} \, \operatorname{Sym}(L)(\xi_S) \subset \bigcap_{\xi \in \R^m} \operatorname{ker} \, \operatorname{Sym}(L)(\xi),
\end{equation*}
which is \(\{0\}\) since \(\operatorname{Sym}(L)(D)\) is cocanceling. It follows that 
\begin{equation*}
\bigcap_{\xi \in U} \operatorname{ker} \, \operatorname{Sym}(L)(\xi) = \{0\}.
\end{equation*}
The converse is obvious.
\end{proof}

\begin{proof}[Proof of Proposition~\ref{prop:cocan_preserve}]
Suppose \(\operatorname{Sym}(L_0)(D)\) is cocanceling, and suppose there exists \(\xi_0 \in \R^m\) such that \(\operatorname{Sym}(M)(\xi_0)\) is injective. Then there exists an open neighborhood \(U\) of \(\xi_0\) such that \(\operatorname{Sym}(M)(\xi)\) is injective for all \(\xi \in U\). Now \begin{equation*}\operatorname{Sym}(M \circ L_0)(\xi) = \operatorname{Sym}(M)(\xi) \circ \operatorname{Sym}(L_0)(\xi)\end{equation*} for all \(\xi \in \R^m\), and
\begin{equation*}
\operatorname{ker} \, [\operatorname{Sym}(M)(\xi) \circ \operatorname{Sym}(L_0)(\xi)] = \operatorname{ker} \, \operatorname{Sym}(L_0)(\xi)
\end{equation*} 
whenever \(\xi \in U\).
As a result, 
\begin{align*}
\bigcap_{\xi \in \R^m} \operatorname{ker} \, \operatorname{Sym}(M \circ L_0)(\xi) 
&\subset \bigcap_{\xi \in U} \operatorname{ker} \, [\operatorname{Sym}(M)(\xi) \circ \operatorname{Sym}(L_0)(\xi)] \\
&= \bigcap_{\xi \in U} \operatorname{ker} \, \operatorname{Sym}(L_0)(\xi)
= \{0\},
\end{align*} 
the last equality following from Lemma~\ref{lem:cocan_open}. Hence \(\operatorname{Sym}(M \circ L_0)(D)\) is cocanceling.
\end{proof}

\section{Applications} \label{sect:applications}

We are now ready to revisit the Examples~\ref{eg1}, \ref{eg2} and \ref{eg3}. As usual, \(X_1, \dots, X_m\) represents a basis of \(\g_1\).

Much of the following proposition is not new. We include it here mainly for the purpose of exposition, to illustrate how our methods apply in this simple case.

\begin{proposition} \label{prop:subelliptic_gradient}
Suppose \(G\) is a stratified homogeneous group with homogeneous dimension \(Q \ge 2\). Let \(u \in C^{\infty}_c(G;\R)\). Then
\begin{equation} \label{eq:conclude1a}
\|u\|_{L^{\frac{Q}{Q-1}}(G;\R)} \leq C \sum_{j=1}^m \|X_j u\|_{L^1(G;\R)}.
\end{equation}
Furthermore, 
\begin{equation} \label{eq:conclude1b}
\int_G \frac{|u(x)|}{\|x\|} dx \leq C \sum_{j=1}^m \|X_j u\|_{L^1(G;\R)},
\end{equation}
and more generally
\begin{equation} \label{eq:conclude1c}
\left( \int_G \bigl(\|x\|^{Q-1} |u(x)| \bigr)^p \frac{dx}{\|x\|^Q} \right)^{1/p} \leq C \sum_{j=1}^m \|X_j u\|_{L^1(G;\R)}
\end{equation}
for all \(p \in [1, \frac{Q}{Q-1})\).
\end{proposition}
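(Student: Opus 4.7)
The plan is to derive all three inequalities by invoking the abstract Theorems~\ref{thm:can_est} and \ref{thm:Hardy}(a) with the subelliptic gradient $A(D) = D$ from Example~\ref{eg1}, i.e.\ with $V = \R$, $E = \g_1^*$ and $k = 1$. The maximal hypoellipticity hypothesis of both theorems is verified in Example~\ref{eg1}. Once a compatible operator $L(D)$ satisfying $L(D) \circ A(D) = 0$ and with $\operatorname{Sym}(L)(D)$ cocanceling is produced, Theorem~\ref{thm:can_est} applied with the empty multiindex $\gamma$ (so that $X_\gamma u = u$) yields \eqref{eq:conclude1a}, while Theorem~\ref{thm:Hardy}(a) applied with $\ell = 1 \in \{1,\dots,\min(k,Q-1)\}$ (which is nonempty because $Q \geq 2$) and any $p \in [1,Q/(Q-1))$ gives \eqref{eq:conclude1c}; specializing to $p=1$ yields \eqref{eq:conclude1b}.

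The central task is therefore the construction of an appropriate $L(D)$. I would first try the sub-Riemannian curl $L_0 \in \End(\g_1^*;\Lambda^2 \g_1^*) \otimes T_1(\g_1)$ defined by $(L_0(D) f)_{ij} = X_i f_j - X_j f_i$ for $i<j$. Its symmetrized symbol $\operatorname{Sym}(L_0)(\xi)(f) = (\xi_i f_j - \xi_j f_i)_{i<j}$ is cocanceling: if $f$ lies in the kernel for every $\xi$, substituting $\xi = e_i$ gives $f_j = 0$ for all $j \neq i$, hence $f = 0$ (here we use $m \geq 2$, which follows from $Q \geq 2$). When $G$ is abelian, $L_0(D) \circ A(D) = 0$ identically and no further work is needed. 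In the general stratified case,
\begin{equation*}
L_0(D) \circ A(D) u = \bigl([X_i,X_j] u\bigr)_{i < j}
\end{equation*}
is nonzero because of the non-abelian structure of $\g$, so additional correction is required.

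To fix this, I would appeal to Proposition~\ref{prop:cocan_preserve}: choose a left-invariant operator $M(D)$ with $\operatorname{Sym}(M)(\xi_0)$ injective for some $\xi_0 \in \R^m$ (for instance $M(D) g = X_1 g$, with $\operatorname{Sym}(M)(e_1) = 1$), so that $\operatorname{Sym}(M \circ L_0)(D)$ remains cocanceling; then seek an operator $N(D)$ with $\operatorname{Sym}(N)(D) = 0$ such that $(M \circ L_0)(D) \circ A(D) = N(D) \circ A(D)$, and finally set $L \defeq M \circ L_0 - N$. By construction $L(D) \circ A(D) = 0$, and by Proposition~\ref{prop:cocan_preserve} the symmetrized symbol $\operatorname{Sym}(L)(D) = \operatorname{Sym}(M \circ L_0)(D)$ is cocanceling. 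As a sanity check on the Heisenberg group $\mathbb{H}^1$: since $T = [X_1,X_2]$ is central, $X_1 T u = T X_1 u$, so taking $N(D) f = [X_1,X_2] f_1$ works and yields $L$ of order~$2$.

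The main obstacle is completing the construction of $M$ and $N$ uniformly on an arbitrary stratified group. The idea is that each commutator expression $[X_i,X_j] u$ lies in $\g_2 \cdot u$ and can be rewritten as $(X_i X_j - X_j X_i) u$, i.e.\ as the image under $A(D)$ of an element in the kernel of the symmetrization map on $T_2(\g_1)$; this is exactly the shape of an $N(D) \circ A(D)$ with $\operatorname{Sym}(N) = 0$. When $[X_i,X_j]$ fails to commute with the $X_\ell$'s, further nested brackets land in higher strata, and one must iterate; the recursion terminates because $\g_{r+1} = \{0\}$, so after finitely many steps all commutator remainders can be absorbed into an operator with vanishing symmetrization. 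Once $L$ is in place the remainder of the proof is a direct application of Theorems~\ref{thm:can_est} and \ref{thm:Hardy}(a) as described in the first paragraph.
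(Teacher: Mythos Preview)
Your overall strategy is exactly the paper's: verify maximal hypoellipticity via Example~\ref{eg1}, take $L_0$ to be the sub-Riemannian curl, upgrade it via Proposition~\ref{prop:cocan_preserve} to an $L = M\circ L_0 - N$ with $L(D)\circ A(D)=0$ and $\operatorname{Sym}(L)(D)$ cocanceling, and then read off \eqref{eq:conclude1a}--\eqref{eq:conclude1c} from Theorems~\ref{thm:can_est} and~\ref{thm:Hardy}(a). Your cocancellation check for $L_0$ and the final deductions are correct.

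The gap is in the construction of $M$ and $N$. The sample choice $M(D)g=X_1 g$ works only for step $r\le 2$; for $r\ge 3$ there is \emph{no} $N$ of order $2$ with $\operatorname{Sym}(N)=0$ satisfying $(M\circ L_0 - N)(D)\circ A(D)=0$. On the Engel group ($m=2$, $Y=[X_1,X_2]$, $Z=[X_1,Y]\ne 0$, $\g_4=0$) every such $N$ has the form $N(D)f=aYf_1+bYf_2$, hence $N(D)(Du)=aYX_1u+bYX_2u$, whereas $(M\circ L_0)(D)(A(D)u)=X_1Yu=YX_1u+Zu$; the $Z$-term cannot be matched. Relatedly, the sentence ``$[X_i,X_j]u\ldots$ is exactly the shape of an $N(D)\circ A(D)$ with $\operatorname{Sym}(N)=0$'' conflates vanishing of the symmetrization of the \emph{composite} $N\circ A$ (an element of $T_2(\g_1)$) with vanishing of $\operatorname{Sym}(N)$ itself (an element of $\End(E;F)\otimes S_1(\g_1)$). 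Only the latter is what Proposition~\ref{prop:cocan_preserve} requires, and it does not follow from the former.

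The fix, which is what the paper does, is to give $M$ enough order that a single commutation identity does all the work. The paper takes $M(D)g=\sum_{i<j}X_j^{\,r}g_{ij}\,e^i\wedge e^j$ (so $\operatorname{Sym}(M)(1,\dots,1)=\id_F$) and uses the telescoping identity
\[
X_j^{\,r}[X_i,X_j]=\sum_{s=1}^{r}X_j^{\,r-s}\,\underbrace{[X_j,[X_j,\dots,[X_i,X_j]]]}_{s\ \text{brackets}}\;X_j
\;+\;\underbrace{[X_j,\dots,[X_i,X_j]]}_{r+1\ \text{brackets}}.
\]
The last term lies beyond $\g_r$ and vanishes; every remaining term ends in $X_j$, so it acts on $f_j$ alone, and its coefficient $X_j^{\,r-s}\otimes(\text{nested commutator})$ has vanishing symmetrization. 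Reading these off gives an explicit $N$ with $\operatorname{Sym}(N)=0$. Equivalently, $M=X_1^{\,r-1}$ (acting diagonally) also works, via $X_1^{\,r-1}[X_i,X_j]=\sum_{s=0}^{r-2}\binom{r-1}{s}(\ad X_1)^s[X_i,X_j]\,X_1^{\,r-1-s}$. Once $M$ has order at least $r-1$, your ``iteration terminates because $\g_{r+1}=\{0\}$'' becomes a one-line algebraic identity rather than a vague recursion, and the rest of your argument goes through verbatim.
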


\begin{proof}
We will use the notations from Example \ref{eg1}. Additionally, let \(F = \Lambda^2(\g_1^*)\). 
Define \(L_0 \in \End(E;F) \otimes T_1(\g_1)\) such that 
\begin{equation*}
L_0(D) f \defeq \sum_{1 \leq i < j \leq m} (X_i f_j - X_j f_i) e^i \wedge e^j
\end{equation*} 
for \(f = \sum_{1 \leq j \leq m} f_j e^j \in C^{\infty}(G;E)\). Then 
\begin{equation*}
L_0 (D) \circ A(D) u = \sum_{1 \leq i < j \leq m} [X_i,X_j] u \, e^i \wedge e^j
\end{equation*} 
if \(u \in C^{\infty}(G;V)\).
Furthermore, define \(M \in \End(F;F) \otimes T_r(\g_1)\) such that \begin{equation*}
M(D) g \defeq \sum_{1 \leq i < j \leq m} X_j^r g_{ij} \, e^i \wedge e^j\end{equation*} 
for \(g = \sum_{1 \leq i < j \leq m} g_{ij} \, e^i \wedge e^j\); here \(r\) is the step of the Lie algebra \(\g\), so that any commutator of length \(r+1\) of elements from \(\g_1\) is zero.
Then there exists \(N \in \End(E;F) \otimes T_{r+1}(\g_1)\) such that \begin{equation*}
 (M \circ L_0 - N)(D)  \circ A(D) = 0\end{equation*} and \begin{equation*}\operatorname{Sym}(N)(D) = 0.\end{equation*} This holds because 
\begin{align*}
X_j^r [X_i, X_j] 
&= X_j^{r-1} [X_i, X_j] X_j + X_j^{r-1} [X_j, [X_i,X_j]] \\
&= X_j^{r-1} [X_i, X_j] X_j + X_j^{r-2} [X_j, [X_i,X_j]] X_j + X_j^{r-2} [X_j, [X_j, [X_i,X_j]]] \\
&= \dotsb \\
&= \sum_{s=1}^r X_j^{r-s} \underbrace{ [X_j, [X_j, \cdots [X_i, X_j] ] ] }_{\text{\(s\) brackets}} X_j + \underbrace{ [X_j, [X_j, \cdots [X_i, X_j] ] ] }_{\text{\(r+1\) brackets}}
\end{align*}
while the last term is zero since it has \(r+1\) brackets; hence it suffices to take \begin{equation*}N(D) f = \sum_{s=1}^r X_j^{r-s} \underbrace{ [X_j, [X_j, \cdots [X_i, X_j] ] ] }_{\text{\(s\) brackets}} f_j \, e^i \wedge e^j\end{equation*} if \(f = \sum_{j=1}^m f_j \, e^j\). 
Since \(\operatorname{Sym}(M)(\xi_0)\) is the identity map on \(F\) when \(\xi_0 = (1,\dotsc,1)\), and \(\operatorname{Sym}(L_0)(D)\) is cocanceling (here we use \(m \geq 2\) which follows from the assumption \(Q \geq 2\)), by Proposition~\ref{prop:cocan_preserve}, we have \(\operatorname{Sym} (M \circ L_0 - N)(D)\) being cocanceling. 
Taking \(L \defeq M \circ L_0 - N\), \eqref{eq:conclude1a} now follows from Theorem~\ref{thm:can_est}, and \eqref{eq:conclude1b}, \eqref{eq:conclude1c} follow from Theorem~\ref{thm:Hardy}.
\end{proof}

Next we generalize the previous proposition to a `higher order gradient'.

\begin{proposition} \label{prop:subelliptic_highergradient}
Suppose \(G\) is a stratified homogeneous group with homogeneous dimension \(Q \ge 2\). Let \(k \in \N\) and \(u \in C^{\infty}_c(G;\R)\). Then
\begin{equation} \label{eq:conclude2a}
\|D^{k-1} u\|_{L^{\frac{Q}{Q-1}}(G;\R)} \leq C \sum_{j=1}^m \|X_j^k u\|_{L^1(G;\R)},
\end{equation}
and for all \(\ell \in \{1, \dotsc, \min\{k,Q-1\}\}\), we have
\begin{equation} \label{eq:conclude2b}
\int_G \frac{|D^{k-\ell} u(x)|}{\|x\|^{\ell}} dx \leq C \sum_{j=1}^m \|X_j^k u\|_{L^1(G;\R)}
\end{equation}
and more generally
\begin{equation} \label{eq:conclude2c}
\left( \int_G (\|x\|^{Q-\ell} |D^{k-\ell} u(x)|)^p \frac{dx}{\|x\|^Q} \right)^{1/p} \leq C \sum_{j=1}^m \|X_j^k u\|_{L^1(G;\R)}
\end{equation}
for all \(p \in [1, \frac{Q}{Q-1})\). 
If \(k \geq Q\), we also have
\begin{equation} \label{eq:conclude2d}
\|D^{k-Q} u\|_{L^{\infty}(G;\R)} \leq C \sum_{j=1}^m \|X_j^k u\|_{L^1(G;\R)}.
\end{equation}
\end{proposition}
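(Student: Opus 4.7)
The plan is to apply Theorem~\ref{thm:can_est} and Theorem~\ref{thm:Hardy} to $A(D) u = \sum_{j=1}^m X_j^k u \, e^j$ from Example~\ref{eg2}, where maximal hypoellipticity has already been verified. What remains is to construct some $L \in \End(E; F) \otimes T_{\ell}(\g_1)$ satisfying $L(D) \circ A(D) = 0$ with $\operatorname{Sym}(L)(D)$ cocanceling; given such an $L$, the inequality \eqref{eq:conclude2a} will follow from Theorem~\ref{thm:can_est}, while \eqref{eq:conclude2b}--\eqref{eq:conclude2d} follow from Theorem~\ref{thm:Hardy}.

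Following the template of Proposition~\ref{prop:subelliptic_gradient}, I take $F = \Lambda^2 \g_1^*$ and define a curl-type $L_0 \in \End(E; F) \otimes T_k(\g_1)$ by
\begin{equation*}
L_0(D) f = \sum_{1 \leq i < j \leq m} (X_i^k f_j - X_j^k f_i) \, e^i \wedge e^j, \qquad f = \sum_{\ell} f_\ell e^\ell.
\end{equation*}
Evaluating $\operatorname{Sym}(L_0)(\xi) f = \sum_{i<j}(\xi_i^k f_j - \xi_j^k f_i) \, e^i \wedge e^j$ at $\xi$ equal to each standard basis vector of $\R^m$ forces $f = 0$, so $\operatorname{Sym}(L_0)(D)$ is cocanceling (using $m \geq 2$, which is ensured by $Q \geq 2$). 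In the Euclidean setting $L_0(D) \circ A(D) = 0$, but in general
\begin{equation*}
L_0(D) \circ A(D) u = \sum_{1 \leq i < j \leq m} [X_i^k, X_j^k] u \cdot e^i \wedge e^j,
\end{equation*}
so I introduce $M(D) g = \sum_{i<j} X_j^{rk} g_{ij} \, e^i \wedge e^j$ (where $r$ is the step of $\g$), and aim to establish the commutator identity
\begin{equation*}
X_j^{rk} [X_i^k, X_j^k] = \tilde{N}_{ij}(D) \cdot X_j^k \qquad \text{with } \operatorname{Sym}(\tilde{N}_{ij})(D) = 0
\end{equation*}
for each $1 \leq i < j \leq m$.

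This commutator identity is the main obstacle; it generalizes the $k=1$ identity underpinning Proposition~\ref{prop:subelliptic_gradient}. My approach is to expand $[X_i^k, X_j^k] = X_i^k X_j^k - X_j^k X_i^k$ and iteratively apply the elementary formula $X_j^n Y = \sum_{s \geq 0} \binom{n}{s} (\ad X_j)^s(Y) X_j^{n-s}$ to move the factor $X_j^n$ (with $n = rk$ or $(r+1)k$) to the right of $X_i^k$ in each of the two terms, generating higher-graded elements $(\ad X_j)^s(X_i) \in \g_{s+1}$ which vanish for $s \geq r$. Since each of the $k$ copies of $X_i$ can absorb at most $r-1$ such commutators before annihilating, the resulting expression is a finite sum in which every term has $X_j$-power at least $n - k(r-1) \geq k$ on the right, so that $X_j^k$ can be factored out uniformly. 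The two ``principal'' (purely ordered) terms both equal $X_i^k X_j^{(r+1)k}$ and cancel in the difference, so every remaining term carries at least one factor of the form $(\ad X_j)^s(X_i) \in \g_{s+1}$ with $s \geq 1$; consequently, as in Proposition~\ref{prop:subelliptic_gradient}, each such term is commutator-type with vanishing symmetrization, and hence so is $\tilde{N}_{ij}$. The bookkeeping is more intricate when $r \geq 3$, because the higher-graded elements no longer commute with $X_1, \dotsc, X_m$, but it amounts to a standard iteration terminated by $\g_{r+1} = 0$.

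Granting this identity, I set $N(D) f = \sum_{i<j} \tilde{N}_{ij}(D) f_j \, e^i \wedge e^j$, so that $\operatorname{Sym}(N)(D) = 0$ and $M(D) \circ L_0(D) \circ A(D) = N(D) \circ A(D)$. The operator $L \defeq M \circ L_0 - N$ then satisfies $L(D) \circ A(D) = 0$ by construction; and since $\operatorname{Sym}(L)$ coincides with $\operatorname{Sym}(M) \circ \operatorname{Sym}(L_0)$, which is cocanceling by Proposition~\ref{prop:cocan_preserve} (at $\xi_0 = (1, \dotsc, 1)$, $\operatorname{Sym}(M)(\xi_0)$ is the identity on $F$ and hence injective), Theorem~\ref{thm:can_est} yields \eqref{eq:conclude2a}, Theorem~\ref{thm:Hardy}(a) yields \eqref{eq:conclude2b} and \eqref{eq:conclude2c}, and Theorem~\ref{thm:Hardy}(b) yields \eqref{eq:conclude2d}.
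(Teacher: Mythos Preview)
Your proposal is correct and follows the same overall strategy as the paper: take the curl-type $L_0$ with $\operatorname{Sym}(L_0)$ cocanceling, compose with a diagonal $M$ built from powers of $X_j$, subtract a commutator-type correction $N$ with $\operatorname{Sym}(N)=0$, and then invoke Proposition~\ref{prop:cocan_preserve} together with Theorems~\ref{thm:can_est} and~\ref{thm:Hardy}.

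The one substantive difference is in the commutator bookkeeping and the order of $M$. The paper takes $M$ of order $k^2 r$ and iterates $\ad X_j^k$ on the block $[X_i^k, X_j^k]$, using a pigeonhole argument (only $k$ copies of $X_i$ spread over $kr+1$ nested brackets, so some $X_i$ sits in a bracket of length $\ge r+1$) to kill the final term $(\ad X_j^k)^{kr}([X_i^k, X_j^k])$. You instead take $M$ of order $rk$, expand $[X_i^k,X_j^k]$ as a difference, and push single $X_j$'s past single $X_i$'s via $X_j^n X_i = \sum_{s}\binom{n}{s}(\ad X_j)^s(X_i)\,X_j^{n-s}$, using directly that $(\ad X_j)^s(X_i)\in\g_{s+1}=0$ for $s\ge r$; the leading $s=0$ terms then cancel in the difference and the rest carry a genuine commutator factor, giving $\operatorname{Sym}(N)=0$. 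Your route yields a smaller exponent in $M$ and makes the vanishing mechanism more elementary; the paper's route keeps the manipulation at the level of $X_j^k$-blocks, which produces a clean closed formula for $N$ without having to track the multinomial coefficients. Either way one lands in the hypotheses of Theorems~\ref{thm:can_est} and~\ref{thm:Hardy}, so the conclusions \eqref{eq:conclude2a}--\eqref{eq:conclude2d} follow identically.
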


\begin{proof}
We adopt the notations from Example~\ref{eg2}. Additionally, let \(F = \Lambda^2(\g_1^*)\). 
Define \(L_0 \in \End(E;F) \otimes T_k(\g_1)\) such that 
\begin{equation*}
L_0(D) f 
\defeq 
\sum_{1 \leq i < j \leq m} (X_i^k f_j - X_j^k f_i) e^i \wedge e^j
\end{equation*} 
for \(f = \sum_{1 \leq j \leq m} f_j e^j \in C^{\infty}(G;E)\). 
Then 
\begin{equation*}
L_0 (D) \circ A(D) u = \sum_{1 \leq i < j \leq m} [X_i^k,X_j^k] u \, e^i \wedge e^j,
\end{equation*}
or more explicitly
\begin{equation*}
L_0(D) \circ A(D) u = \sum_{1 \leq i < j \leq m} \sum_{1 \leq s, t \leq k} X_i^{k-s} X_j^{k-t} [X_i,X_j] X_j^{t-1} X_i^{s-1} u \, e^i \wedge e^j
\end{equation*} 
if \(u \in C^{\infty}(G;V)\).
Furthermore, define \(M \in \End(F;F) \otimes T_{k^2 r}(\g_1)\) such that \begin{equation*}
M(D) g \defeq \sum_{1 \leq i < j \leq m} X_j^{k^2 r} g_{ij} \, e^i \wedge e^j
\end{equation*} 
for \(g = \sum_{1 \leq i < j \leq m} g_{ij} \, e^i \wedge e^j\); again \(r\) is the step of the Lie algebra \(\g\).
Then there exists \(N \in \End(E;F) \otimes T_{k(kr+1)}(\g_1)\) such that \begin{equation*}
(M \circ L_0 - N)(D)  \circ A(D) = 0
\end{equation*} and 
\begin{equation*}
    \operatorname{Sym}(N)(D) = 0.
\end{equation*} 
This holds because 
\begin{align*}
X_j^{k^2 r} [X_i^k, X_j^k] 
&= \sum_{s=1}^{kr} X_j^{k(kr-s)} \underbrace{ [X_j^k, [X_j^k, \cdots [X_i^k, X_j^k] ] ] }_{\text{\(s\) brackets}} X_j^k + \underbrace{ [X_j^k, [X_j^k, \cdots [X_i^k, X_j^k] ] ] }_{\text{\(k r + 1\) brackets}}.
\end{align*}
We claim that the last term is zero: this is because there are \(k r + 1\) brackets, each of which involving at least one \(X_i\)'s, but there are only \(k\) \(X_i\)'s; hence at least one of the \(X_i\)'s is in a bracket of length at least \(r+1\), which is zero. As a result, it suffices to take \begin{equation*}
N(D) f \defeq \sum_{s=1}^{kr} X_j^{k(kr-s)} \underbrace{ [X_j^k, [X_j^k, \cdots [X_i^k, X_j^k] ] ] }_{\text{\(s\) brackets}} X_j^k f_j \, e^i \wedge e^j
\end{equation*} 
if \(f = \sum_{j=1}^m f_j \, e^j\). 
Since \(\operatorname{Sym}(M)(\xi_0)\) is the identity map on \(F\) when \(\xi_0 = (1,\dotsc,1)\), and \(\operatorname{Sym}(L_0)(D)\) is cocanceling  (here we use \(m \geq 2\) which follows from the assumption \(Q \geq 2\)), by Proposition~\ref{prop:cocan_preserve}, we have \(\operatorname{Sym} (M \circ L_0 - N)(D)\) being cocanceling. 
Taking \(L \defeq M \circ L_0 - N\), \eqref{eq:conclude2a} now follows from Theorem~\ref{thm:can_est}, and \eqref{eq:conclude2b}, \eqref{eq:conclude2c} and \eqref{eq:conclude2d} follow from Theorem~\ref{thm:Hardy}.
\end{proof}

Finally, on a general stratified homogeneous group \(G\) with \(G \ne \R\), we have the following endpoint Korn--Sobolev inequality, and the following endpoint Korn--Hardy inequality.

\begin{theorem} \label{thm:Korn_Sobolev}
Suppose \(G\) is a stratified homogeneous group with homogeneous dimension~\(Q \ge 2\). Let \(u_1, \dots, u_m \in C^{\infty}_c(G;\R)\) where \(m = \dim \g_1\). Then 
\begin{equation} \label{eq:conclude3a}
\sum_{j=1}^m \|u_j\|_{L^{\frac{Q}{Q-1}}(G;\R)} \leq C \sum_{i,j=1}^m \|X_i u_j + X_j u_i\|_{L^1(G;\R)}.
\end{equation}
Furthermore, 
\begin{equation} \label{eq:conclude3b}
\sum_{j=1}^m \int_G \frac{|u_j(x)|}{\|x\|} dx \leq C  \sum_{i,j=1}^m  \|X_i u_j + X_j u_i\|_{L^1(G;\R)},
\end{equation}
and more generally
\begin{equation} \label{eq:conclude3c}
\sum_{j=1}^m \left( \int_G (\|x\|^{Q-1} |u_j(x)|)^p \frac{dx}{\|x\|^Q} \right)^{1/p} \leq C  \sum_{i,j=1}^m \|X_i u_j + X_j u_i\|_{L^1(G;\R)}
\end{equation}
for all \(p \in [1, \frac{Q}{Q-1})\).
\end{theorem}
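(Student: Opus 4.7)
The plan is to reduce Theorem~\ref{thm:Korn_Sobolev} to Theorems~\ref{thm:can_est} and~\ref{thm:Hardy}, applied to the Korn operator
\[
A(D) u = \sum_{1 \le i \le j \le m}(X_i u_j + X_j u_i)\, e^{ij}
\]
of Example~\ref{eg3}. Example~\ref{eg3} already establishes that \(A(D)\) is maximally hypoelliptic (the assumption \(Q \ge 2\) being what guarantees \(m = \dim \g_1 \ge 2\)), so the remaining task is to exhibit an \(L \in \End(E;F) \otimes T_\ell(\g_1)\) satisfying \(L(D) \circ A(D) = 0\) and with \(\operatorname{Sym}(L)(D)\) cocanceling.

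Following the template of Propositions~\ref{prop:subelliptic_gradient} and~\ref{prop:subelliptic_highergradient}, the natural starting point is to take \(L_0 \in \End(E;F) \otimes T_2(\g_1)\) to be the stratified analog of the Saint--Venant compatibility operator,
\[
L_0(D)\Bigl(\sum_{i \le j} f_{ij} e^{ij}\Bigr) = \sum_{i,j,k,l}\bigl(X_k X_l f_{ij} + X_i X_j f_{kl} - X_i X_l f_{jk} - X_j X_k f_{il}\bigr)\, e^{(ij)(kl)},
\]
for an appropriate target \(F\). The symmetrized symbol \(\operatorname{Sym}(L_0)(\xi)\) coincides with its Euclidean counterpart; its kernel at any fixed \(\xi \neq 0\) is exactly \(\{e \in S_2(\g_1) : e_{ij} = \xi_i a_j + \xi_j a_i,\, a \in \R^m\}\), and intersecting these kernels over two linearly independent choices of \(\xi\) already yields \(\{0\}\) when \(m \ge 2\). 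By Lemma~\ref{lem:cocan_open}, \(\operatorname{Sym}(L_0)(D)\) is thus cocanceling.

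In the stratified setting \(L_0(D) \circ A(D) u\) no longer vanishes, but a direct expansion shows that every coefficient is a sum of one-commutator terms of the shape \(X_a [X_b, X_c] X_d\), \([X_a, X_b] X_c X_d\), and \(X_a X_b [X_c, X_d]\); in particular every such term has vanishing symmetrized symbol when regarded as a third-order operator in horizontal derivatives. The plan is then, as in Proposition~\ref{prop:subelliptic_highergradient}, to compose with \(M(D) = X_1^{K r}\) for a sufficiently large integer \(K\). Using the identity \(X_1^N Y = \sum_{s} \binom{N}{s} (\ad X_1)^s(Y)\, X_1^{N-s}\) together with the fact that \((\ad X_1)^s\) raises the grading weight by \(s\) and hence vanishes as soon as \(s + \text{weight}(Y) > r\), only finitely many terms survive, each ending in a high power of \(X_1\). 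Combined with the elementary identity \(X_d u_e = (A(D) u)_{de} - X_e u_d\), which absorbs a single factor \(X_d u_e\) into \(A(D) u\) at the cost of an antisymmetric remainder to be processed inductively, one rewrites \(M \circ L_0 \circ A(D) u\) in the form \(N(D)(A(D) u)\) for some \(N\) with \(\operatorname{Sym}(N)(D) = 0\). Setting \(L \defeq M \circ L_0 - N\), one has \(L(D) \circ A(D) = 0\), while Proposition~\ref{prop:cocan_preserve} applied with \(\xi_0 = (1, 0, \ldots, 0)\), for which \(\operatorname{Sym}(M)(\xi_0)\) equals the identity, ensures that \(\operatorname{Sym}(L)(D) = \operatorname{Sym}(M \circ L_0)(D)\) remains cocanceling.

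The main obstacle is the algebraic bookkeeping in the preceding paragraph: choosing \(K\) large enough and iterating the commutator identity and the substitution \(X_d u_e = (A(D) u)_{de} - X_e u_d\) in a way that closes up, so that every antisymmetric remainder \(X_d u_e - X_e u_d\) is eventually absorbed into a bracket of length exceeding \(r\), which vanishes by \(\g_{r+1} = 0\). This is the same phenomenon as in Example~\ref{eg3}, only that here it must be tracked inside the composition with \(L_0\) rather than used to extract \(X_i u_j\). Once \(L\) is in hand, \eqref{eq:conclude3a} follows directly from Theorem~\ref{thm:can_est}, while \eqref{eq:conclude3b} and \eqref{eq:conclude3c} follow from Theorem~\ref{thm:Hardy}\eqref{it_ainie9yoh5cau5Ieniid6aiv} applied with \(\ell = 1\) (which is an admissible choice because \(Q \ge 2\) makes \(\min\{k, Q-1\} = 1\)).
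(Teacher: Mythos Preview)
Your overall strategy---reduce to Theorems~\ref{thm:can_est} and~\ref{thm:Hardy} by exhibiting a compatible \(L(D)\), starting from a Saint--Venant type \(L_0\) and postcomposing with a suitable \(M\) via Proposition~\ref{prop:cocan_preserve}---is the paper's strategy too. The choices differ: you take the full four-index Saint--Venant operator and \(M(D)=X_1^{Kr}\cdot\operatorname{Id}\), whereas the paper uses the much smaller two-index operator
\[
L_0(D)f=\sum_{i<j}\Bigl(\tfrac12\bigl(X_i^2 f_{jj}+X_j^2 f_{ii}\bigr)-X_iX_j f_{ij}\Bigr)\,e^i\wedge e^j
\]
into \(F=\Lambda^2(\g_1)\), together with the component-dependent \(M(D)g=\sum_{i<j}X_i^{r(2r+2)}X_j^{2r}g_{ij}\,e^i\wedge e^j\). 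The paper's choices make \(L_0(D)\circ A(D)u=\sum_{i<j}\bigl(X_i[X_i,X_j]u_j+[X_j^2,X_i]u_i\bigr)\,e^i\wedge e^j\) depend only on \(u_i,u_j\) in each component, and the two powers in \(M\) are tuned so that the nested-commutator identity rewrites each piece explicitly as an operator with vanishing symbol applied to a \emph{diagonal} entry \((A(D)u)_{ii}\) or \((A(D)u)_{jj}\); the required \(N\) is then written down in closed form.

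Your route can be made to work, but the mechanism you describe for building \(N\) is not correct. The quantity \(X_d u_e-X_e u_d\) is not a bracket of vector fields---it mixes different components of \(u\)---so it is never ``absorbed into a bracket of length exceeding \(r\)''. What actually closes the loop, and is specific to your choice \(M=X_1^{Kr}\), is the diagonal identity \(X_1 u_1=\tfrac12(A(D)u)_{11}\): after commuting \(X_1^{Kr}\) past the (order-three, not order-four) commutator terms of \(L_0\circ A\), each surviving piece ends in \(X_1^{P}u_e\) with \(P\) large; one substitution \(X_1 u_e=(A(D)u)_{1e}-X_e u_1\) followed by one further commutation reduces this to \((\text{operator})\cdot X_1^{P'}u_1=\tfrac12(\text{operator})\cdot X_1^{P'-1}(A(D)u)_{11}\), and all the operators acting on \(A(D)u\) inherit vanishing symbol from the factor \(\operatorname{Sym}(L_0\circ A)=0\). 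Two small corrections: \(L_0\circ A\in T_3(\g_1)\), so its commutator expansion produces terms like \(X_a[X_b,X_c]\) or \([X_a,X_b]X_c\), not \(X_a[X_b,X_c]X_d\); and the intersection of \(\ker\operatorname{Sym}(L_0)(\xi)\) over just two linearly independent \(\xi\) need not be \(\{0\}\) (try \(m=2\), \(\xi=(1,0)\), \(\eta=(0,1)\))---use an open set of \(\xi\) and Lemma~\ref{lem:cocan_open} instead.
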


\begin{proof}
We adopt the notations from Example~\ref{eg3}. Additionally, let \(F = \Lambda^2(\g_1)\) and define \(L_0 \in \End(E;F) \otimes T_2(\g_1)\) such that
\begin{equation*}
L_0(D) f \defeq \sum_{1 \leq i < j \leq m} \left( \frac{ X_i^2 f_{jj} + X_j^2 f_{ii} }{2} - X_i X_j f_{ij} \right) \, e^i \wedge e^j
\end{equation*}
if \(f = \sum_{1 \leq i \leq j \leq m} f_{ij} \, e^{ij}\). Then
\begin{equation*}
L_0(D) \circ A(D) u = \sum_{1 \leq i < j \leq m} (X_i [X_i,X_j] u_j + [X_j^2,X_i] u_i) \, e^i \wedge e^j
\end{equation*}
if \(u = \sum_{1 \leq j \leq m} u_j e^j\). Furthermore, define \(M \in \End(F;F) \otimes T_{r(2r+4)}(\g_1)\) such that 
\begin{equation*}
M(D) g = \sum_{1 \leq i < j \leq m} X_i^{r(2r+2)} X_j^{2r} g_{ij} \, e^i \wedge e^j
\end{equation*}
for \(g = \sum_{1 \leq i < j \leq m} g_{ij} \, e^i \wedge e^j\); again \(r\) is the step of the Lie algebra \(\g\). Then there exists \(N \in \End(E;F) \otimes T_{r(2r+4)+2}\) such that
\begin{equation*}
(M \circ L_0 - N)(D) \circ A(D) = 0
\end{equation*}
and
\begin{equation*}
\operatorname{Sym}(N)(D) = 0.
\end{equation*}
This holds because 
\begin{equation*}
X_j^{2r} X_i [X_i, X_j]
= \sum_{s = 1}^{2r} X_j^{2r-s} \underbrace{ [X_j, [X_j, \cdots,  X_i [X_i, X_j] ] ] }_{\text{\(s\) brackets}} X_j +  \underbrace{ [X_j, [X_j, \cdots,  X_i [X_i, X_j] ] ] }_{\text{\(2r+1\) brackets}},
\end{equation*}
the last term being zero because there are \(2r+1\) brackets, but only two \(X_i\)'s; also
\begin{align*}
X_i^{r(2r+2)} X_j^{2r} [X_j^2, X_i] &= \sum_{s = 1}^{r(2r+2)} X_i^{r(2r+2)-s} \underbrace{ [X_i, [X_i, \cdots, X_j^{2r} [X_j^2, X_i] ] ] }_{\text{\(s\) brackets}} X_i \\
& \quad + \underbrace{ [X_i, [X_i, \cdots, X_j^{2r} [X_j^2, X_i] ] ] }_{\text{\(r(2r+2)+1\) brackets}},
\end{align*}
the last term being zero because there are \(r(2r+2)+1\) brackets, but only \(2r+2\) \(X_j\)'s.
As a result, it suffices to take
\begin{align*}
N(D) f 
&\defeq \frac{1}{2} \sum_{1 \leq i < j \leq m} \sum_{s = 1}^{2r} X_i^{r(2r+2)} X_j^{2r-s} \underbrace{ [X_j, [X_j, \cdots,  X_i [X_i, X_j] ] ] }_{\text{\(s\) brackets}} f_{jj}  \, e^i \wedge e^j \\
& \quad +  \frac{1}{2} \sum_{1 \leq i < j \leq m} \sum_{s = 1}^{r(2r+2)} X_i^{r(2r+2)-s} \underbrace{ [X_i, [X_i, \cdots, X_j^{2r} [X_j^2, X_i] ] ] }_{\text{\(s\) brackets}} f_{ii} \, e^i \wedge e^j
\end{align*}
if \(f = \sum_{j=1}^m f_j \, e^j\). Since \(\operatorname{Sym}(M)(\xi_0)\) is the identity map on \(F\) when \(\xi_0 = (1,\dotsc,1)\), and \(\operatorname{Sym}(L_0)(D)\) is cocanceling  (here we use \(m \geq 2\) which follows from the assumption \(Q \geq 2\)), by Proposition~\ref{prop:cocan_preserve}, we have \(\operatorname{Sym} (M \circ L_0 - N)(D)\) being cocanceling. Taking \(L \defeq M \circ L_0 - N\), \eqref{eq:conclude3a} follows from Theorem~\ref{thm:can_est}, and \eqref{eq:conclude3b}, \eqref{eq:conclude3c} follow from Theorem~\ref{thm:Hardy}.
\end{proof}

\appendix

\section{Proof of Theorem~\ref{thm:kernel_prelim}}

In this appendix we prove Theorem~\ref{thm:kernel_prelim}. For this we will need to work with (isotropic \(L^2\)-based) Sobolev spaces \(H^s(G)\), defined for \(s \in \R\) to be the set of all real tempered distributions \(u\) on the underlying Euclidean space, for which
\begin{equation*}
\|u\|_{H^s(G)} \defeq \int_G (1+ 4 \pi^2 |\xi|^2)^{s} |\mathcal{F} u(\xi)|^2 \,d\xi < \infty;
\end{equation*}
here \(\mathcal{F} u\) is the (Euclidean) Fourier transform of \(u\), which is required to agree with a locally integrable function if \(u \in H^s(G)\), where we use the convention
\begin{equation*}
\mathcal{F} \phi (\xi) = \int_{G} e^{-2\pi \imath \xi \cdot x} \phi (x)\,dx.
\end{equation*}

The space \(H^s(G)\) is a real Hilbert space under the real inner product 
\begin{equation*}
(u,v)_{H^s(G)} \defeq \int_G (1+ 4 \pi^2 |\xi|^2)^{s} \mathcal{F} u(\xi) \mathcal{F} v(-\xi) d\xi.
\end{equation*}
For every \(s \in \R\), the dual space to \(H^s(G)\) is \(H^{-s}(G)\), via the pairing
\begin{equation*}
( u, \phi ) \defeq \int_G \mathcal{F} u(\xi) \mathcal{F} \phi(-\xi) d\xi
\end{equation*}
if \(u \in H^{-s}(G)\) and \(\phi \in H^s(G)\). 

We are now ready to prove Theorem~\ref{thm:kernel_prelim}.

\begin{proof}[Proof of Theorem~\ref{thm:kernel_prelim}]
First, the hypoellipticity of \(A(D)\) will be used to show that \(A^t(D)\) is locally solvable (cf.\ Tr\`{e}ves \cite{Treves}*{Theorem 52.2}): in particular, there exists a bounded open set \(\Omega \subset G\) containing \(0\) and some \(\K_{\text{loc}} \in \D'(G;\End(V;E))\) such that 
\begin{equation*}
A^t(D) \K_{\text{loc}} = \delta \otimes I \quad \text{on \(\Omega\)},
\end{equation*}
where we have extended by tensor product with \(V\) to obtain a continuous linear operator \(A^t(D) \colon \D'(G;\End(V;E)) \to \D'(G;\End(V;V)) \).

Indeed, let \(K\) be any compact subset of \(G\). We claim that for every \(s \in \N\) there exists \(s' \in \N\) and \(C > 0\) such that 
\begin{equation} \label{eq:claim1_hypo}
\|\phi \|_{H^s(G;V)} \leq C ( \|A(D) \phi\|_{H^{s'}(G;E)} + \|\phi\|_{L^2(G;V)} )
\end{equation}
for all \(\phi \in \D(K;V)\). 
To prove this, let \(\tau\) be the Fr\'{e}chet topology we endowed on \(\D(K;V)\), and \(\tilde{\tau}\) be the locally convex metrizable topology on \(\D(K;V)\) given by a countable family of separating seminorms \(\{\vertiii{\cdot}_s \colon s \in \N_0\}\) where 
\begin{equation*}
\vertiii{\phi}_s \defeq \|A(D) \phi \|_{H^s(G;E)} + \|\phi\|_{L^2(G;V)}
\end{equation*}
for \(\phi \in \D(K;V)\). 
One can check that \(\D(K;V)\) is complete under the topology \(\tilde{\tau}\), making \((\D(K;V),\tilde{\tau})\) a Fr\'{e}chet space as well. 
This is because if \((\phi_i)_{i \in \N}\) is a Cauchy sequence in \((\D(K;V), \tilde{\tau})\), then there exists \(\phi \in L^2(G;V)\) and \(\Phi \in \bigcap_{s \in \N_0} H^s(G;E)\) such that
\begin{equation*}
\lim_{i \to \infty} \|\phi_i - \phi\|_{L^2(G;V)} = 0 \quad \text{and} \quad \lim_{i \to \infty} \|A(D) \phi_i - \Phi \|_{H^s(G;E)} = 0
\end{equation*}
for every \(s \in \N_0\). 
In particular, the case \(s = 0\) shows that \(A(D) \phi = \Phi\) in \(\D'(G;E)\), and Sobolev embedding implies \(\Phi \in C^{\infty}(G;E)\), so the hypoellipiticity of \(A(D)\) implies that \(\phi \in C^{\infty}(G;V)\), and the support conditions on the sequence \(\phi_i\) further implies \(\phi \in \D(K;V)\). 
Furthermore, \(\vertiii{\phi_i - \phi}_s = \|A^t(D) \phi_i - \Phi\|_{H^s(G;E)} + \|\phi_i - \phi\|_{L^2(G;V)} \to 0\) as \(i \to +\infty\), for every \(s \in \N_0\). 
Thus the sequence \((\phi_i)_{i \in \N_0}\) converges in \((\D(K;V),\tilde{\tau})\), and this verifies the completeness of \(\D(K;V)\) under the topology \(\tilde{\tau}\).

Since for every \(s \in \N_0\), there exists \(n \in \N_0\) and \(c > 0\) such that 
\begin{equation*}
\vertiii{\phi}_s \leq c \|\phi\|_{C^n(K;V)} 
\end{equation*}
for all \(\phi \in \D(K;V)\), the identity map is a continuous linear map from \((\D(K;V),\tau)\) to \((\D(K;V),\tilde{\tau})\). Since both \((\D(K;V),\tau)\) and \((\D(K;V),\tilde{\tau})\) are Fr\'{e}chet spaces, the open mapping theorem implies that the identity map is also continuous from \((\D(K;V),\tilde{\tau})\) to \((\D(K;V),\tau)\), which implies (cf.\ \cite{Grubb}*{Lemma B.7}) that for every \(n \in \N_0\), there exists \(s' \in \N_0\) and \(c' > 0\) such that
\begin{equation*}
\|\phi\|_{C^n(K;V)} \leq c' \vertiii{\phi}_{s'}
\end{equation*}
for all \(\phi \in \D(K;V)\). This in turn establishes our previous claim \eqref{eq:claim1_hypo}, because 
\begin{equation*}
\|\phi\|_{H^s(G;V)} \leq c_s \|\phi\|_{C^s(K;V)}
\end{equation*}
for all \(\phi \in \D(K;V)\) and \(s \in \N_0\).

Now fix \(s \in \N\) large enough so that \(\delta \in H^{-s}(G)\). Let \(s'\) and \(C\) be chosen as in the claim \eqref{eq:claim1_hypo} depending on \(s\). If \(\Omega \subset G\) is an open neighborhood of \(0\) with sufficiently small diameter, we claim that 
\begin{equation}
\label{eq_ab5rah6phuY0keirai4ahb1u}
C \|\phi\|_{L^2(G;V)} \leq \frac{1}{2} \|\phi \|_{H^s(G;V)}
\end{equation}
for all \(\phi \in \D(\overline{\Omega};V)\).
Indeed, by the fundamental theorem of calculus, or equivalently, by Poincar\'e's inequality, for all \(\phi \in \D(\overline{\Omega};V)\), we have
\begin{equation*}
\|\phi\|_{L^2(G;V)} \leq (  \operatorname{diam}\, \Omega) \|\phi\|_{H^1(G;V)},
\end{equation*}
so it suffices to take \(\operatorname{diam}\, \Omega \leq 1/(2C)\) for the desired inequality to hold. 
In this case, it follows from \eqref{eq:claim1_hypo} and \eqref{eq_ab5rah6phuY0keirai4ahb1u} that 
\begin{equation*}
\|\phi \|_{H^s(G;V)} \leq 2C \|A(D) \phi\|_{H^{s'}(G;E)}
\end{equation*}
for all \(\phi \in \D(\overline{\Omega};V)\). Let \(Y\) be the closure, in \(H^{s'}(G;E)\), of the set 
\begin{equation*}
\{ A(D) \phi \in H^{s'}(G;E) \colon \phi \in \D(\overline{\Omega};V) \}.
\end{equation*}
By density, the map \(A(D) \phi \mapsto \phi\) can be extended to a continuous linear map of \(Y\) into \(H^s(G;V)\). It can be further extended as a continuous linear map \(T \colon H^{s'}(G;E) \to H^s(G;V)\), by setting it to be zero on the orthogonal complement of \(Y\) in \(H^{s'}(G;E)\). The adjoint \(T^*\) is then a continuous linear map \(T^* \colon H^{-s}(G;V) \to H^{-s'}(G;E)\), such that for all \(u \in H^{-s}(G;V)\), one has
\begin{equation*}
A^t(D) T^* u = u \quad \text{on \(\Omega\)};
\end{equation*}
indeed, for every \(\phi \in \D(\overline{\Omega};V)\), we have
\(
\langle A^t(D) T^* u, \phi \rangle_{V,G} 
= \langle u, T A(D) \phi \rangle_{V,G}
= \langle u, \phi \rangle_{V,G}.
\)
It remains to observe that if \(\{v_i\}_{i = 1}^{\dim V}\) is an orthonormal basis for \(V\), then 
\begin{equation*}
\K_{\text{loc}} \defeq \sum_{i = 1}^{\dim V} T^*(\delta \otimes v_i) \otimes v_i
\end{equation*}
is in \(H^{-s'}(G;E) \otimes V = H^{-s'}(G;\End(V;E)) \subset \D'(G;\End(V;E))\) and 
\begin{equation*}
A^t(D) \K_{\text{loc}} = \sum_{i=1}^{\dim V} \delta \otimes v_i \otimes v_i = \delta \otimes I \quad \text{on \(\Omega\)},
\end{equation*} 
as desired.

Next, using the assumption that \(A^t(D)\) is hypoelliptic and homogeneous of order \(k\), we may use a rescaling of \(\K_{\text{loc}}\) to construct a global \(\K \in \D'(G;\End(V;E))\) such that 
\begin{equation*}
A^t(D) \K = \delta \otimes I \quad \text{on \(G\)}.
\end{equation*}
Indeed, since \(A^t(D)\) is hypoelliptic and \(A^t(D)\K_{\text{loc}} = 0\) on \(\Omega \setminus \{0\}\), we have 
\begin{equation*}
\K_{\text{loc}} \in C^{\infty}(\Omega \setminus \{0\};\End(V;E)).
\end{equation*} 
Let \(\eta \in C^{\infty}_c(\Omega;\R)\) be such that \(\eta(x) = 1\) in an open set containing \(0\). Then letting \(\K^{(1)} \defeq \eta \K_{\text{loc}}\), we have
\begin{equation*}
A^t(D) \K^{(1)} = \delta \otimes I + \Phi^{(1)} \quad \text{on \(G\)}
\end{equation*}
where \(\Phi^{(1)} \in C^{\infty}(G;\End(V;V))\) vanishes in an open set containing \(0\). Now for \(\lambda > 0\), let
\begin{equation*}
\K^{(\lambda)} \defeq \lambda^{k-Q} \K^{(1)} \circ \delta_{\lambda^{-1}} \quad \text{and} \quad \Phi^{(\lambda)} \defeq \lambda^{k-Q} \Phi \circ \delta_{\lambda^{-1}}
\end{equation*}
so that 
\begin{equation*}
A^t(D) \K^{(\lambda)} = \delta \otimes I + \Phi^{(\lambda)} \quad \text{on \(G\)}
\end{equation*}
for every \(\lambda > 0\). As \(\lambda \to +\infty\), \(\Phi^{(\lambda)} \to 0\) in the topology of \(\D'(G;\End(V;V))\) because \(\Phi\) vanishes in an open set containing \(0\). We claim that 
\begin{enumerate}[(i)]
\item if \(k < Q\), then \(\K^{(\lambda)}\) converges in the topology of \(\D'(G;\End(V;E))\) as \(\lambda \to +\infty\);
\item if \(k \geq Q\), then there exist \(\End(V;E)\)--valued polynomials \(p_0, p_1, \dotsc, p_{k-Q}\) on \(G\), with \(p_i\) homogeneous of degree \(i\) for all \(i \in \{0, \dotsc, k - Q\}\), such that 
\begin{equation} \label{eq:tildeKlam_def}
\tilde{\K}^{(\lambda)} \defeq \K^{(\lambda)} - (\log \lambda) p_{k-Q}  - \sum_{i=1}^{k-Q}  \lambda^{i} p_{k-Q-i}
\end{equation}
converges in the topology of \(\D'(G;\End(V;E))\) as \(\lambda \to +\infty\). 
\end{enumerate}
In the first case, we define \(\K \in \D'(G;\End(V;E))\) to the limit of \(\K^{(\lambda)}\) as \(\lambda \to +\infty\); in the second case, we define \(\K \in \D'(G;\End(V;E))\) to be the limit of \(\tilde{\K}^{(\lambda)}\) as \(\lambda \to +\infty\). We then have 
\begin{equation*}
A^t(D) \K = \delta \otimes I \quad \text{on \(G\)}
\end{equation*}
in either case, as desired.

To prove the claims above, first observe that 
\begin{equation*}
\K' \defeq \lim_{\lambda \to 1} \frac{\K^{(\lambda)} - \K^{(1)}}{\lambda - 1}
\end{equation*}
exists in \(\D'(G;\End(V;E))\). This is because for any \(\phi \in \D(G;\End(V;E))\), 
\begin{equation*}
\Bigl\langle \frac{\K^{(\lambda)} - \K^{(1)}}{\lambda - 1}, \phi \Bigr\rangle
= \Bigl\langle \K^{(1)}, \frac{\lambda^{k} \phi \circ \delta_{\lambda} - \phi }{\lambda - 1} \Bigr\rangle
\end{equation*}
and 
\begin{equation*}
\frac{\lambda^{k} \phi \circ \delta_{\lambda} - \phi }{\lambda - 1}
\end{equation*}
converges in the topology of \(\D(G;\End(V;E))\) as \(\lambda \to 1\) (indeed, recalling our notation \(x = (x_1, \dotsc, x_r) \in \g =  \g_1 \oplus \dotsb \oplus \g_r\), we have
\begin{equation*}
\frac{\phi \circ \delta_{\lambda} - \phi }{\lambda - 1} \to \sum_{j=1}^r j \, x_j \cdot \partial_{x_j} \phi(x) 
\end{equation*}
in the topology of \(\D(G;\End(V;E))\) as \(\lambda \to 1\), which shows that 
\begin{equation*}
\frac{\lambda^{k} \phi \circ \delta_{\lambda} - \phi }{\lambda - 1} \to k \phi(x) + \sum_{j=1}^r j x_j \cdot \partial_{x_j} \phi(x) 
\end{equation*}
in the topology of \(\D(G;\End(V;E))\) as \(\lambda \to 1\)). Furthermore, \(\K'\) is compactly supported (since \(\K^{(1)}\) is), and \(A^t(D) \K' \in C^{\infty}(G;\End(V;V))\) (indeed, by the continuity of \(A^t(D) \colon \D'(G;\End(V;E)) \to \D'(G;\End(V;V))\),
\begin{equation*}
\begin{split}
A^t(D) \K' &= \lim_{\lambda \to 1} A^t(D) \left( \frac{ \lambda^{k-Q} \K^{(1)} \circ \delta_{\lambda^{-1}} - \K^{(1)} }{\lambda - 1} \right) \\
&= \lim_{\lambda \to 1} \frac{ \lambda^{-Q} (A^t(D) \K^{(1)}) \circ \delta_{\lambda^{-1}} - (A^t(D) \K^{(1)})}{\lambda - 1}\\
&= \lim_{\lambda \to 1} \frac{ \lambda^{-Q} \Phi^{(1)} \circ \delta_{\lambda^{-1}} - \Phi^{(1)}}{\lambda - 1}\\
& = -Q \Phi^{(1)} - \sum_{j=1}^r j x_j \cdot \partial_{x_j} \Phi^{(1)} (x) 
\end{split}
\end{equation*}
which is in \(C^{\infty}(G;\End(V;V))\)), so the hypoellipticity of \(A^t(D)\) implies that we have  \(\K' \in C^{\infty}(G;\End(V;E))\)). It follows that \(\K' \in \D(G;\End(V;E))\). Now for every \(\lambda > 0\), the derivative \(\frac{d}{d \lambda} \K^{(\lambda)}\) exists in \(\D'(G;\End(V;E))\) and is equal to \(\lambda^{k-Q-1} \K' \circ \delta_{\lambda^{-1}}\), because 
\begin{equation*}
\frac{1}{\lambda} \lim_{s \to 1} \frac{\K^{(\lambda s)} - \K^{(\lambda)}}{s-1} = \frac{1}{\lambda} \lambda^{k-Q} \lim_{s \to 1} \frac{s^{k-Q} \K^{(1)} \circ \delta_{s^{-1}} - \K^{(1)}}{s-1} \circ \delta_{\lambda^{-1}}
\end{equation*}
converges to \(\lambda^{k-Q-1} \K' \circ \delta_{\lambda^{-1}}\) in \(\D'(G;\End(V;E))\). We now consider two cases. If \(k < Q\), then for every \(\phi \in \D(G;\End(V;E))\), we have
\begin{equation*}
\begin{split}
 \int_1^{\infty} \Bigl\lvert\Bigl\langle \frac{d}{d \lambda} \K^{(\lambda)}, \phi \Bigr\rangle \Bigr\rvert\, d\lambda 
=& \int_1^{\infty} \lambda^{k-Q-1} \left| \langle \K' \circ \delta_{\lambda^{-1}}, \phi \rangle \right| d\lambda \\
\leq & \|\K'\|_{L^{\infty}(G;\End(V;E))} \|\phi\|_{L^1(G;\End(V;E))} \int_1^{\infty} \lambda^{s-Q-1} d\lambda < \infty.
\end{split}
\end{equation*}
Hence in this case, \(\K^{(\lambda)} = \K^{(1)} + \int_1^{\lambda} \frac{d}{d \mu} \K^{(\mu)} d\mu\) converges in \(\D'(G;\End(V;E))\) as \(\lambda \to +\infty\), verifying claim (i). On the other hand, if \(k \geq Q\), for every \(i \in \N_0\),
\begin{equation*}
 q_i(x) \defeq \left. \frac{d^i}{d \varepsilon^i} \right|_{\varepsilon = 0} [\K' \circ \delta_{\varepsilon}(x)]
\end{equation*}
is an \(\End(V;E)\)--valued homogeneous polynomial of degree \(i\), and \(\K' \circ \delta_{\varepsilon}\) can be Taylor expanded at \(\varepsilon = 0\), leading to
\begin{equation*}
\K' \circ \delta_{\lambda^{-1}}(x) = \sum_{i=0}^{k-Q} q_i(x) \lambda^{-i} + \int_0^{\lambda^{-1}} \frac{(\lambda^{-1} - \varepsilon)^{k-Q}}{(k-Q)!}  \frac{d^{k-Q+1}}{d \varepsilon^{k-Q+1}} (\K' \circ \delta_{\varepsilon})(x) d\varepsilon
\end{equation*}
for \(\lambda > 0\). We then have
\begin{equation*}
\begin{split}
\frac{d}{d \lambda} \K^{(\lambda)}(x)
&= \lambda^{k-Q-1} \K' \circ \delta_{\lambda^{-1}}(x) \\
&= \sum_{i=0}^{k-Q} q_i(x) \lambda^{k-Q-i-1} + \lambda^{k-Q-1} \int_0^{\lambda^{-1}} \frac{(\lambda^{-1} - \varepsilon)^{k-Q}}{(k-Q)!}  \frac{d^{k-Q+1}}{d \varepsilon^{k-Q+1}} \K' \circ \delta_{\varepsilon}(x) d\varepsilon \\
&= \frac{d}{d \lambda} \left[ q_{k-Q}(x) \log \lambda + \sum_{i=0}^{k-Q-1} q_i(x) \frac{\lambda^{k-Q-i}}{k-Q-i} \right] + e^{(\lambda)}(x)
\end{split}
\end{equation*}
for some \(e^{(\lambda)}(x) \in C^{\infty}(G;\End(V;E))\); taking \(p_{k-Q} \defeq q_{k-Q}\) and \(p_{k-Q-i} \defeq \frac{q_{k-Q-i}}{i}\) for \(i = 1, \dotsc, k-Q\), and defining \(\tilde{\K}^{(\lambda)}\) by \eqref{eq:tildeKlam_def}, we have 
\begin{equation*}
\frac{d}{d \lambda} \tilde{\K}^{(\lambda)} = e^{(\lambda)}(x).
\end{equation*}
It remains to observe that for every compact subset \(K \subset G\), there exists a constant \(C_K\) (depending also on \(k\)) such that
\begin{equation*}
\sup_{\varepsilon > 0} \left\| \frac{d^{k-Q+1}}{d \varepsilon^{k-Q+1}} \K' \circ \delta_{\varepsilon}(x) \right\|_{L^{\infty}(K;\End(V;V))} \leq C_K.
\end{equation*}
As a result, for every \(\lambda > 0\) and every \(\phi \in \D(G;\End(V;E))\), we have
\begin{equation*}
|\langle e^{(\lambda)}, \phi \rangle| \leq C_{\text{supp}\, \phi} \|\phi\|_{L^1(G;\End(V;E))} \lambda^{k-Q-1} \int_0^{\lambda^{-1}} \frac{(\lambda^{-1} - \varepsilon)^{k-Q}}{(k-Q)!} d\varepsilon \lesssim \lambda^{-2}.
\end{equation*}
Integration in \(\lambda\) yields
\begin{equation*}
\begin{split}
 \int_1^{\infty} \Bigl| \Bigl\langle \frac{d}{d \lambda} \tilde{\K}^{(\lambda)}, \phi \Bigr\rangle \Bigr| d\lambda 
= \int_1^{\infty} \bigl| \bigl\langle e^{(\lambda)}, \phi \bigr\rangle \bigr| d\lambda 
< \infty
\end{split}
\end{equation*}
so that \(\tilde{\K}^{(\lambda)} = \tilde{\K}^{(1)} + \int_1^{\lambda} \frac{d}{d \mu} \tilde{\K}^{(\mu)} d\mu\) converges in \(\D'(G;\End(V;E))\) as \(\lambda \to +\infty\), verifying claim (ii).

Finally, to complete the proof of the second conclusion of the theorem, note that \(\K\) always agrees with some \(\End(V;E)\)--valued \(C^{\infty}\) function on \(G \setminus \{0\}\), by hypoellipticity of \(A^t(D)\). Suppose first \(k \geq Q\). Then 
\begin{equation*}
\K = \lim_{\lambda \to \infty} \tilde{\K}^{(\lambda)} = \lim_{\lambda \to \infty} \left[ \K^{(\lambda)} - (\log \lambda) p_{k-Q}  - \sum_{i=1}^{k-Q}  \lambda^{i} p_{k-Q-i} \right]
\end{equation*}
satisfies 
\begin{equation} \label{eq:Kcirc_scale}
\K \circ \delta_s = s^{k-Q} \bigl(\K + (\log s) P \bigr)
\end{equation}
for all \(s > 0\), where \(P \defeq - p_{k-Q}\). Indeed, recall \(\K^{(\lambda)} = \lambda^{k-Q} \K^{(1)} \circ \delta_{\lambda^{-1}}\), which gives, for any \(s > 0\), that \begin{equation*}\K^{(\lambda)} \circ \delta_s = s^{k-Q} \K^{(\lambda s^{-1})}.\end{equation*} 
It follows that
\begin{equation*}
\begin{split}
\tilde{\K}^{(\lambda)} \circ \delta_s 
&= s^{k-Q} \K^{(\lambda s^{-1})} - (\log \lambda) s^{k-Q} p_{k-Q}  - \sum_{i=1}^{k-Q}  \lambda^{i} s^{k-Q-i} p_{k-Q-i} \\
&= s^{k-Q} \biggl[ \K^{(\lambda s^{-1})} - (\log (\lambda s^{-1})) p_{k-Q}  - \sum_{i=1}^{k-Q}  (\lambda s^{-1})^{i} p_{k-Q-i} \biggr] - s^{k-Q} (\log s) p_{k-Q} \\
&= s^{k-Q} \tilde{\K}^{(\lambda s^{-1})} - s^{k-Q} (\log s) p_{k-Q}
\end{split}
\end{equation*}
for all \(s > 0\). Letting \(\lambda \to \infty\), we obtain \eqref{eq:Kcirc_scale}, which implies that \(\K - P(x) \log \|x\|\) is homogeneous of degree \(k-Q\). Since \(k-Q > -Q\), we see that \(\K - P(x) \log \|x\|\) is given by integration against some \(\End(V;E)\)--valued \(C^{\infty}\) function \(\K_{\infty}\) on \(G \setminus \{0\}\) that is homogeneous of degree \(k-Q\), as was to be proved. A similar but simpler calculation shows that if \(k < Q\), then \(\K\) is homogeneous of degree \(k-Q\), hence a kernel of type \(k\). This completes the proof of the theorem.
\end{proof}

\begin{bibdiv}
  \begin{biblist}

\bib{Baldi_Franchi_2013}{article}{
   author={Baldi, Annalisa},
   author={Franchi, Bruno},
   title={Sharp a priori estimates for div-curl systems in Heisenberg
   groups},
   journal={J. Funct. Anal.},
   volume={265},
   date={2013},
   number={10},
   pages={2388--2419},
   issn={0022-1236},
}

\bib{Baldi_Franchi_Pansu_2016}{article}{
   author={Baldi, Annalisa},
   author={Franchi, Bruno},
   author={Pansu, Pierre},
   title={Gagliardo-Nirenberg inequalities for differential forms in
   Heisenberg groups},
   journal={Math. Ann.},
   volume={365},
   date={2016},
   number={3-4},
   pages={1633--1667},
   issn={0025-5831},
}

\bib{BaldiFranchiTesi2006}{article}{
   author={Baldi, Annalisa},
   author={Franchi, Bruno},
   author={Tesi, Maria Carla},
   title={Fundamental solution and sharp \(L^p\) estimates for Laplace
   operators in the contact complex of Heisenberg groups},
   journal={Ric. Mat.},
   volume={55},
   date={2006},
   number={1},
   pages={119--144},
   issn={0035-5038},
}

\bib{BaldiFranchiTesi2008}{article}{
   author={Baldi, Annalisa},
   author={Franchi, Bruno},
   author={Tesi, Maria Carla},
   title={Compensated compactness in the contact complex of Heisenberg
   groups},
   journal={Indiana Univ. Math. J.},
   volume={57},
   date={2008},
   number={1},
   pages={133--185},
}

\bib{BaldiFranchiTesi2009}{article}{
   author={Baldi, Annalisa},
   author={Franchi, Bruno},
   author={Tesi, Maria Carla},
   title={Hypoellipticity, fundamental solution and Liouville type theorem
   for matrix-valued differential operators on Carnot groups},
   journal={J. Eur. Math. Soc. (JEMS)},
   volume={11},
   date={2009},
   number={4},
   pages={777--798},
}
                
\bib{MR2504877}{article}{
   author={Bonfiglioli, Andrea},
   title={Taylor formula for homogeneous groups and applications},
   journal={Math. Z.},
   volume={262},
   date={2009},
   number={2},
   pages={255--279},
   issn={0025-5874},
}

\bib{MR1913720}{article}{
   author={Bourgain, Jean},
   author={Brezis, Ha\"\i m},
   title={Sur l'\'equation \(\operatorname{div}\,u=f\)},
   journal={C. R. Math. Acad. Sci. Paris},
   volume={334},
   date={2002},
   number={11},
   pages={973--976},
}

\bib{MR1949165}{article}{
   author={Bourgain, Jean},
   author={Brezis, Ha\"\i m},
   title={On the equation \(\operatorname{div}\, Y=f\) and application to control of
   phases},
   journal={J. Amer. Math. Soc.},
   volume={16},
   date={2003},
   number={2},
   pages={393--426},
}

\bib{MR2057026}{article}{
   author={Bourgain, Jean},
   author={Brezis, Ha\"\i m},
   title={New estimates for the Laplacian, the div-curl, and related Hodge systems},
   journal={C. R. Math. Acad. Sci. Paris},
   volume={338},
   date={2004},
   number={7},
   pages={539--543},
}

\bib{MR2293957}{article}{
   author={Bourgain, Jean},
   author={Brezis, Ha\"\i m},
   title={New estimates for elliptic equations and Hodge type systems},
   journal={J. Eur. Math. Soc. (JEMS)},
   volume={9},
   date={2007},
   number={2},
   pages={277--315},
}

\bib{MR2075883}{article}{
   author={Bourgain, Jean},
   author={Brezis, Haim},
   author={Mironescu, Petru},
   title={\(H^{1/2}\) maps with values into the circle: minimal connections,
   lifting, and the Ginzburg-Landau equation},
   journal={Publ. Math. Inst. Hautes \'{E}tudes Sci.},
   number={99},
   date={2004},
   pages={1--115},
   issn={0073-8301},
}

\bib{MR2807409}{article}{
   author={Bousquet, Pierre},
   author={Mironescu, Petru},
   title={An elementary proof of an inequality of Maz'ya involving $L^1$
   vector fields},
   conference={
      title={Nonlinear elliptic partial differential equations},
   },
   book={
      series={Contemp. Math.},
      volume={540},
      publisher={Amer. Math. Soc., Providence, RI},
   },
   date={2011},
   pages={59--63},
}

\bib{MR3283556}{article}{
   author={Bousquet, Pierre},
   author={Van Schaftingen, Jean},
   title={Hardy-Sobolev inequalities for vector fields and canceling linear
   differential operators},
   journal={Indiana Univ. Math. J.},
   volume={63},
   date={2014},
   number={5},
   pages={1419--1445},
}

\bib{Chanillo_VanSchaftingen_2009}{article}{
   author={Chanillo, Sagun},
   author={Van Schaftingen, Jean},
   title={Subelliptic Bourgain-Brezis estimates on groups},
   journal={Math. Res. Lett.},
   volume={16},
   date={2009},
   number={3},
   pages={487--501},
}

\bib{MR3336090}{article}{
   author={Ciatti, Paolo},
   author={Cowling, Michael G.},
   author={Ricci, Fulvio},
   title={Hardy and uncertainty inequalities on stratified Lie groups},
   journal={Adv. Math.},
   volume={277},
   date={2015},
   pages={365--387},
   issn={0001-8708},
}
                
\bib{MR0494315}{article}{
   author={Folland, G. B.},
   title={Subelliptic estimates and function spaces on nilpotent Lie groups},
   journal={Ark. Mat.},
   volume={13},
   date={1975},
   number={2},
   pages={161--207},
}                

\bib{MR657581}{book}{
   author={Folland, G. B.},
   author={Stein, Elias M.},
   title={Hardy spaces on homogeneous groups},
   series={Mathematical Notes},
   volume={28},
   publisher={Princeton University Press, Princeton, N.J.; University of
   Tokyo Press, Tokyo},
   date={1982},
   pages={xii+285},
}

\bib{Franchi_Lu_Wheeden_1995}{article}{
   author={Franchi, B.},
   author={Lu, G.},
   author={Wheeden, R. L.},
   title={Representation formulas and weighted Poincar\'{e} inequalities for
   H\"{o}rmander vector fields},
   journal={Ann. Inst. Fourier (Grenoble)},
   volume={45},
   date={1995},
   number={2},
   pages={577--604},
   issn={0373-0956},
}

\bib{Garofalo_Nhieu_1996}{article}{
   author={Garofalo, Nicola},
   author={Nhieu, Duy-Minh},
   title={Isoperimetric and Sobolev inequalities for Carnot-Carath\'{e}odory
   spaces and the existence of minimal surfaces},
   journal={Comm. Pure Appl. Math.},
   volume={49},
   date={1996},
   number={10},
   pages={1081--1144},
   issn={0010-3640},
}
		
\bib{Grubb}{book}{
   author={Grubb, Gerd},
   title={Distributions and operators},
   series={Graduate Texts in Mathematics},
   volume={252},
   publisher={Springer, New York},
   date={2009},
   pages={xii+461},
   isbn={978-0-387-84894-5},
}

\bib{MR537467}{article}{
   author={Helffer, B.},
   author={Nourrigat, J.},
   title={Caracterisation des op\'{e}rateurs hypoelliptiques homog\`enes
   invariants \`a gauche sur un groupe de Lie nilpotent gradu\'{e}},
   journal={Comm. Partial Differential Equations},
   volume={4},
   date={1979},
   number={8},
   pages={899--958},
   issn={0360-5302},
}

\bib{Knapp2002}{book}{
   author={Knapp, Anthony W.},
   title={Lie groups},
   subtitle={Beyond an introduction},
   series={Progress in Mathematics},
   volume={140},
   edition={2},
   publisher={Birkh\"auser}, 
   address={Boston, Mass.},
   date={2002},
   pages={xviii+812},
}

\bib{MR2578609}{article}{
   author={Maz'ya, Vladimir},
   title={Estimates for differential operators of vector analysis involving
   $L^1$-norm},
   journal={J. Eur. Math. Soc. (JEMS)},
   volume={12},
   date={2010},
   number={1},
   pages={221--240},
   issn={1435-9855},
}
		
\bib{MR739894}{article}{
   author={Melin, Anders},
   title={Parametrix constructions for right invariant differential
   operators on nilpotent groups},
   journal={Ann. Global Anal. Geom.},
   volume={1},
   date={1983},
   number={1},
   pages={79--130},
   issn={0232-704X},
}

\bib{MR4024554}{article}{
   author={Rai\c{t}\u{a}, Bogdan},
   title={Critical $L^p$-differentiability of $\mathrm{BV}^{\mathbb{A}}$-maps
   and canceling operators},
   journal={Trans. Amer. Math. Soc.},
   volume={372},
   date={2019},
   number={10},
   pages={7297--7326},
   issn={0002-9947},
}
		
\bib{MR4087393}{article}{
   author={Rai\c{t}\u{a}, Bogdan},
   author={Skorobogatova, Anna},
   title={Continuity and canceling operators of order $n$ on $\mathbb{R}^n$},
   journal={Calc. Var. Partial Differential Equations},
   volume={59},
   date={2020},
   number={2},
   pages={Paper No. 85, 17},
   issn={0944-2669},
}
		
\bib{Rudin}{book}{
   author={Rudin, Walter},
   title={Functional analysis},
   series={International Series in Pure and Applied Mathematics},
   edition={2},
   publisher={McGraw-Hill, Inc., New York},
   date={1991},
   pages={xviii+424},
   isbn={0-07-054236-8},
}

\bib{MR3966452}{book}{
   author={Ruzhansky, Michael},
   author={Suragan, Durvudkhan},
   title={Hardy inequalities on homogeneous groups},
   series={Progress in Mathematics},
   volume={327},
   note={100 years of Hardy inequalities},
   publisher={Birkh\"{a}user/Springer, Cham},
   date={2019},
   pages={xvi+571},
   isbn={978-3-030-02894-7},
   isbn={978-3-030-02895-4},
}

\bib{Stein}{book}{
   author={Stein, Elias M.},
   title={Harmonic analysis: real-variable methods, orthogonality, and
   oscillatory integrals},
   series={Princeton Mathematical Series},
   volume={43},
   publisher={Princeton University Press},
   address={Princeton, N.J.},
   date={1993},
   pages={xiv+695},
}
\bib{Strauss_1973}{article}{
   author={Strauss, Monty J.},
   title={Variations of Korn's and Sobolev's equalities},
   conference={
      title={Partial differential equations},
      address={Proc. Sympos. Pure Math., Vol. XXIII, Univ. California,
      Berkeley, Calif.},
      date={1971},
   },
   book={
      publisher={Amer. Math. Soc., Providence, R.I.},
   },
   date={1973},
   pages={207--214},
}
\bib{Street}{book}{
   author={Street, Brian},
   title={Multi-parameter singular integrals},
   series={Annals of Mathematics Studies},
   volume={189},
   publisher={Princeton University Press}, address={Princeton, N.J.},
   date={2014},
   pages={xvi+392},
   isbn={978-0-691-16252-2},
}

\bib{Treves}{book}{
   author={Tr\`eves, Fran\c{c}ois},
   title={Topological vector spaces, distributions and kernels},
   publisher={Academic Press, New York-London},
   date={1967},
   pages={xvi+624},
}

\bib{MR2038078}{article}{
   author={Van Schaftingen, Jean},
   title={A simple proof of an inequality of Bourgain, Brezis and Mironescu},
   journal={C. R. Math. Acad. Sci. Paris},
   volume={338},
   date={2004},
   number={1},
   pages={23--26},
}

\bib{VanSchaftingen_2004_L1}{article}{
   author={Van Schaftingen, Jean},
   title={Estimates for \(L^1\)-vector fields},
   journal={C. R. Math. Acad. Sci. Paris},
   volume={339},
   date={2004},
   number={3},
   pages={181--186},
}
	
\bib{VanSchaftingen_2004_ARB}{article}{
   author={Van Schaftingen, Jean},
   title={Estimates for \(L^1\) vector fields with a second order condition},
   journal={Acad. Roy. Belg. Bull. Cl. Sci. (6)},
   volume={15},
   date={2004},
   number={1-6},
}

\bib{VanSchaftingen_2008}{article}{
   author={Van Schaftingen, Jean},
   title={Estimates for \(L^1\) vector fields under higher-order differential
   conditions},
   journal={J. Eur. Math. Soc. (JEMS)},
   volume={10},
   date={2008},
   number={4},
   pages={867--882},
}

\bib{VanSchaftingen_2013}{article}{
   author={Van Schaftingen, Jean},
   title={Limiting Sobolev inequalities for vector fields and canceling
   linear differential operators},
   journal={J. Eur. Math. Soc. (JEMS)},
   volume={15},
   date={2013},
   number={3},
   pages={877--921},
}
\bib{MR3298002}{article}{
   author={Van Schaftingen, Jean},
   title={Limiting Bourgain-Brezis estimates for systems of linear
   differential equations: theme and variations},
   journal={J. Fixed Point Theory Appl.},
   volume={15},
   date={2014},
   number={2},
   pages={273--297},
   issn={1661-7738},
}

\bib{MR3191973}{article}{
   author={Wang, Yi},
   author={Yung, Po-Lam},
   title={A subelliptic Bourgain-Brezis inequality},
   journal={J. Eur. Math. Soc. (JEMS)},
   volume={16},
   date={2014},
   number={4},
   pages={649--693},
}

  \end{biblist}
\end{bibdiv}

\end{document}